\newtheorem{thm}{Theorem}
\newtheorem{lemma}{Lemma}
\newtheorem{prop}{Proposition}
\newtheorem{cor}{Corollary}
\theoremstyle{definition}
\newtheorem{defn}{Definition}
\newtheorem{problem}{Problem}
\theoremstyle{remark}
\newtheorem{rem}{Remark}
\newtheorem{rems}[rem]{Remarks}
\newtheorem{ex}{Example}
\newcounter{numl}
\newcommand{\labelnuml}{\textup{(\roman{numl})}}
\newenvironment{numlist}{\begin{list}{\labelnuml}%
{\usecounter{numl}\setlength{\leftmargin}{0pt}%
\setlength{\itemindent}{2\parindent}%
\setlength{\itemsep}{\smallskipamount}\def
\makelabel ##1{\hss \llap {\upshape ##1}}}}{\end{list}}
\newenvironment{bulletlist}{\begin{list}{\labelitemi}%
{\setlength{\leftmargin}{\parindent}\def
\makelabel ##1{\hss \llap {\upshape ##1}}}}{\end{list}}
\DeclareSymbolFont{script}{U}{eus}{m}{n}
\DeclareSymbolFontAlphabet{\mathscr}{script}
\DeclareMathSymbol{\Wedge}{0}{script}{"5E}
\DeclareMathAlphabet{\mathrmsl}{OT1}{cmr}{m}{sl}
\renewcommand{\geq}{\geqslant}
\renewcommand{\leq}{\leqslant}
\newcommand{\R}{{\mathbb R}}
\newcommand{\C}{{\mathbb C}}
\newcommand{\Z}{{\mathbb Z}}
\newcommand{\Q}{{\mathbb Q}}
\newcommand{\T}{{\mathbb T}}
\newcommand{\cD}{{\mathcal D}}
\newcommand{\cK}{{\mathcal K}}
\newcommand{\cL}{{\mathcal L}}
\newcommand{\cO}{{\mathcal O}}
\newcommand{\cS}{{\mathcal S}}
\newcommand{\eps}{\varepsilon}
\newcommand{\Lam}{{\mathrmsl\Lambda}}
\newcommand{\su}{{\mathfrak{su}}}
\newcommand{\crJ}{{\mathfrak{cr}}}
\newcommand{\con}{\mathfrak{con}}
\newcommand{\tor}{{\mathfrak t}}
\newcommand{\torh}{{\mathfrak h}}
\newcommand{\sub}{\subseteq}
\newcommand{\vol}{\mathit{vol}}
\newcommand{\restr}[1]{|_{#1}^{\vphantom x}}
\newcommand{\into}{\hookrightarrow}
\newcommand{\spn}[1]{\mathop{\mathrmsl{span}}(#1)}
\newcommand{\trace}{\mathop{\mathrm{tr}}\nolimits}
\newcommand{\ip}[1]{\langle #1 \rangle}
\newcommand{\Proj}{\mathrmsl P}
\newcommand{\End}{\mathrmsl{End}}
\newcommand{\Ham}{\mathrmsl{Ham}}
\newcommand{\Con}{\mathrmsl{Con}}
\newcommand{\Aut}{\mathrmsl{Aut}}
\newcommand{\Aff}{\mathrmsl{Aff}}
\renewcommand{\d}{{\mathrmsl d}}
\newcommand{\grad}{\mathop{\mathrmsl{grad}}\nolimits}
\newcommand{\Hess}{\mathrmsl{Hess}}
\newcommand{\Scal}{\mathit{Scal}}
\newcommand{\Bm}{B}
\newcommand{\Sm}{N}
\newcommand{\Dm}{U}
\newcommand{\Sph}{{\mathbb S}}
\newcommand{\Ds}{{\mathscr D}}
\newcommand{\Lv}{L}
\newcommand{\cf}{\eta}
\newcommand{\sas}{\chi}
\newcommand{\cae}{\xi}
\newcommand{\mm}{\mu}
\newcommand{\mc}{z}
\newcommand{\wv}{\mathrmsl{w}}
\newcommand{\wt}{\nu}
\newcommand{\pt}{p}
\newcommand{\Sc}{\Xi}
\newcommand{\J}{{\mathbf J}}
\newcommand{\Om}{{\mathbf\Omega}}
\newcommand{\As}{{\mathcal A}}
\newcommand{\op}{^\circ}
\newcommand{\bx}{\boldsymbol{x}}
\newcommand{\Afn}{A}
\newcommand{\ah}{a}
\newcommand{\bh}{b}
\newcommand{\bht}{c_{\ah,\bh}}
\newcommand{\ca}{p}
\newcommand{\av}{\kappa}
\newcommand{\genus}{{\boldsymbol g}}
\newcommand{\p}{k}
\newcommand{\q}{n}
\begin{document}

\title[CR geometry of weighted extremal metrics]{The CR geometry\\
of weighted extremal K\"ahler and Sasaki metrics}

\author[V. Apostolov]{Vestislav Apostolov}
\address{Vestislav Apostolov \\ D{\'e}partement de Math{\'e}matiques\\
UQAM\\ C.P. 8888 \\ Succursale Centre-ville \\ Montr{\'e}al (Qu{\'e}bec) \\
H3C 3P8 \\ Canada}
\email{apostolov.vestislav@uqam.ca}
\author[D.M.J. Calderbank]{David M. J. Calderbank}
\thanks{VA was supported in part by an NSERC Discovery Grant. The authors are grateful to the Institute of Mathematics and Informatics of the Bulgarian Academy of Science where some of this work was conducted.}
\address{David M. J. Calderbank \\ Department of Mathematical Sciences\\
University of Bath\\ Bath BA2 7AY\\ UK}
\email{D.M.J.Calderbank@bath.ac.uk}

\begin{abstract} We establish an equivalence between conformally
Einstein--Maxwell K\"ahler $4$-manifolds recently studied in
\cite{ambitoric1,AM,FO0,KTF,LeB0,LeB1,LeB2} and extremal K\"ahler
$4$-manifolds in the sense of Calabi~\cite{calabi} with nowhere vanishing
scalar curvature.  The corresponding pairs of K\"ahler metrics arise as
transversal K\"ahler structures of Sasaki metrics compatible with the same CR
structure and having commuting Sasaki--Reeb vector fields. This correspondence
extends to higher dimensions using the notion of a weighted extremal K\"ahler
metric~\cite{ACGL,AMT,lahdili1,lahdili,lahdili2}, illuminating and uniting
several explicit constructions in K\"ahler and Sasaki geometry. It also leads
to new existence and non-existence results for extremal Sasaki metrics,
suggesting a link between the notions of relative weighted K-stability of a
polarized variety introduced in \cite{AMT,lahdili2}, and relative K-stability
of the K\"ahler cone corresponding to a Sasaki polarization, studied in
\cite{BV,CSz}.
\end{abstract}

\maketitle

\section*{Introduction}

The famous Calabi problem~\cite{calabi}, which seeks the existence of
canonical K\"ahler metrics, is a central and very active topic of current
research in K\"ahler geometry.  As a candidate for a canonical metric on a
complex manifold $(M,J)$, Calabi proposed a notion of \emph{extremal K\"ahler
  metric} $g$, meaning that its scalar curvature $\Scal(g)$ is a \emph{Killing
  potential}, i.e., the vector field $J\grad_g \Scal(g)$ is a Killing vector
field for $g$. Examples include constant scalar curvature (CSC) K\"ahler
metrics on $(M,J)$, and hence also K\"ahler--Einstein metrics.

More recently, in real dimension $4$, another natural generalization of CSC
K\"ahler metrics has been studied~\cite{ambitoric1,LeB0,LeB1,LeB2}: K\"ahler
metrics $g$ admitting a positive Killing potential $f$ for which the scalar
curvature of the conformal metric $\tilde g = (1/f^2) g$ is a constant $c$,
i.e.,
\begin{equation}\label{EM}
\Scal(\tilde g) = f^2 \Scal(g) - 6 f\Delta_g f - 12 |\d f|^2_g = c, 
\end{equation}
where $\Delta_g$ is the riemannian laplacian and $|\cdot|_g$ the norm defined
by $g$.  The metric $\tilde g$ satisfies a riemannian analogue of the
Einstein--Maxwell equations with cosmological constant in generally
relativity~\cite{PD,DKM}, and thus we say $g$ is a \emph{conformally
  Einstein--Maxwell K\"ahler metric}.  Many explicit examples of such metrics
have been exhibited~\cite{ambitoric1,ambitoric2, AM,FO0,KTF,LeB1,LeB2}, and
they have a striking resemblance to similar explicit examples of extremal
K\"ahler metrics, see e.g.~\cite[Prop.~3]{AM}. Elucidating the connection
suggested by these examples was the main motivation for this article, and our
main result implies in particular an equivalence between the classes of
conformally Einstein--Maxwell K\"ahler $4$-manifolds and extremal K\"ahler
$4$-manifolds of nowhere zero scalar curvature. Our approach was suggested in
part by~\cite[App.~C]{ambitoric2}, which implies that both kinds of metric can
arise as quotients of a common strictly pseudo-convex CR $5$-manifold $(\Sm,
\Ds, J)$ of Sasaki type. It also generalizes to complex manifolds $(M,J)$ of
real dimension $2m$, using the notion of a \emph{weighted extremal
  metric}~\cite{ACGL,AM,lahdili1}, as we now explain.

Let $(g,\omega)$ be a K\"ahler metric on $(M,J)$, $f$ a function on $M$, and
$\wt\in\R$ a real number (which we call the \emph{weight}). Then the
\emph{$(f,\wt)$ scalar curvature} of $g$ is defined to be
\begin{equation}\label{(f,wt)-scalar-curvature}
\Scal_{f,\wt}(g) := f^2\Scal(g) - 2(\wt-1) f\Delta_g f - \wt(\wt-1)|\d f|^2_g,
\end{equation}

\begin{defn}\label{d:(f,wt)-extremal} Let $(g,\omega)$ be a K\"ahler metric
on $(M,J)$ let $f$ be a Killing potential for $g$ and let $\wt\in\R$.  We say
that $g$ is \emph{$(f,\wt)$-extremal} if its $(f,\wt)$ scalar curvature,
$\Scal_{f,\wt}(g)$, given by \eqref{(f,wt)-scalar-curvature}, is also a
Killing potential.
\end{defn}

When $M$ is compact and $f>0$ on $M$, the $(f,\wt)$ scalar
curvature~\eqref{(f,wt)-scalar-curvature} is the momentum map associated to a
formal GIT problem on the space $\cK_\omega(M)^\T$ of $\T$-invariant
$\omega$-compatible K\"ahler metrics for any torus $\T$ in the group
$\Ham(M,\omega)$ of hamiltonian symplectomorphisms of $(M,\omega)$ which
contains the flow of $K$~\cite{ACGL,AM,lahdili1}. This is similar to the framework
found by Donaldson~\cite{donaldson} and Fujiki~\cite{fujiki} for Calabi
extremal K\"ahler metrics. Indeed, the latter can be recovered from the
weighted generalization by setting $f\equiv 1$.

For $m=2$ and $\wt=4$, \eqref{(f,wt)-scalar-curvature} reduces to \eqref{EM},
so that Definition~\ref{d:(f,wt)-extremal} includes the conformally
Einstein--Maxwell K\"ahler metrics already discussed.  This case was extended
to the weight $\wt=2m$ (for any $m$) in~\cite{AM}, where it was noted
that~\eqref{(f,wt)-scalar-curvature} then computes the scalar curvature of the
hermitian metric $(1/f^2) g$. Thus examples of $(f, 2m)$-extremal metrics
include the conformally Einstein K\"ahler metrics studied in
\cite{DM0,DM}. The weight most relevant here is instead $\wt=m+2$, which first
appeared in \cite{ACGL}, where it was discovered that certain quotients, of an
$m$-fold product $\Sph^3\times \cdots \times \Sph^3$ of CR $3$-spheres by an
$m$-torus, are $(f, m+2)$-extremal for a suitable $f$.  However, an intrinsic
geometric interpretation of $(f,m+2)$-extremality with $m>2$ has so far been
lacking. Our main result rectifies this by providing an interpretation in CR
geometry, whose basic notions we now recall (see also
Section~\ref{s:background}).

Let $(\Sm,\Ds)$ be a contact $(2m+1)$-manifold and denote by $\Lv_\Ds\colon
\Ds\times\Ds\to T\Sm/\Ds$ the Levi form of $\Ds$, defined, via local sections
$X,Y\in C^\infty_\Sm(\Ds)$, by the tensorial expression $\Lv_\Ds(X,Y) = -
\cf_\Ds([X,Y])$, where $\cf_\Ds\colon T\Sm\to T\Sm/\Ds$ is the quotient map.
A \emph{contact vector field} is a vector field $X$ such that $\cL_X
(C^\infty_\Sm(\Ds))\sub C^\infty_\Sm(\Ds)$. We make fundamental use of the
following basic fact in the theory of contact manifolds (see
e.g.~\cite{BG-book}).

\begin{lemma}\label{l:contact} The map $X\mapsto\cf_\Ds(X)$ from contact
vector fields to sections of $T\Sm/\Ds$ is a linear isomorphism, whose inverse
$\cae\mapsto X_\cae$ is a first order linear differential operator.
\end{lemma}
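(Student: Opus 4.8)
The plan is to reduce the statement to a pointwise linear-algebra computation performed with a local contact $1$-form. Since $X\mapsto\cf_\Ds(X)$ is $\R$-linear and intrinsically defined, and since bijectivity on sections and the differential-operator property of the inverse are local assertions, it suffices to fix $\pt\in\Sm$, a neighbourhood $\cO$ of $\pt$ over which $\Ds=\ker\alpha$ for a $1$-form $\alpha$ (possible as $\Ds$ has corank one), and the Reeb field $R$ of $\alpha$, determined by $\alpha(R)=1$ and $\iota_R\d\alpha=0$. Over $\cO$ the line bundle $T\Sm/\Ds$ is trivialised by $\cf_\Ds(R)$, so sections $\cae$ correspond to functions $h$ via $\cae=h\,\cf_\Ds(R)$; note that if $\cf_\Ds(X)=\cae$ then $h=\alpha(X)$.

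First I would recall that a vector field $X$ on $\cO$ is contact if and only if $\cL_X\alpha=\lambda\alpha$ for some function $\lambda$, since $\cL_X$ preserves $C^\infty_\Sm(\Ds)=\ker\alpha$ exactly when $(\cL_X\alpha)\restr{\Ds}=0$. By Cartan's formula $\cL_X\alpha=\iota_X\d\alpha+\d(\alpha(X))$; writing $X=hR+X^\Ds$ with $h=\alpha(X)$ and $X^\Ds\in C^\infty_\Sm(\Ds)$, and using $\iota_R\d\alpha=0$, one gets $\cL_X\alpha=\iota_{X^\Ds}\d\alpha+\d h$. Pairing with $R$ (and using $\d\alpha(X^\Ds,R)=0$) forces $\lambda=\d h(R)$, while pairing with $\Ds$ shows that the contact condition is equivalent to the single equation
\[
\bigl(\iota_{X^\Ds}\d\alpha\bigr)\restr{\Ds}=-(\d h)\restr{\Ds}.
\]
Under the above trivialisation the skew form $\d\alpha\restr{\Ds\times\Ds}$ represents the Levi form $\Lv_\Ds$, which is nondegenerate because $(\Sm,\Ds)$ is contact; hence this equation determines $X^\Ds$ uniquely and $\R$-linearly from the $1$-jet of $h$. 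This simultaneously yields: \emph{injectivity} --- if $\cf_\Ds(X)=0$ then $h=0$, so $X^\Ds$ solves the homogeneous equation and $X=hR+X^\Ds=0$; and \emph{surjectivity with an explicit inverse} --- given $\cae=h\,\cf_\Ds(R)$, set $X_\cae:=hR+X^\Ds$ with $X^\Ds$ the solution above; then $\cL_{X_\cae}\alpha=\d h(R)\,\alpha$ is proportional to $\alpha$, so $X_\cae$ is contact, and $\cf_\Ds(X_\cae)=h\,\cf_\Ds(R)=\cae$. The resulting map $h\mapsto X_\cae=hR-(\d\alpha\restr{\Ds\times\Ds})^{-1}\!\bigl((\d h)\restr{\Ds}\bigr)$ is manifestly a first order linear differential operator; and because a two-sided inverse is unique, the operators built from different choices of $\alpha$ agree on overlaps and patch to the asserted global inverse $\cae\mapsto X_\cae$.

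I expect the only subtle point to be the third step: pinning down that the contact condition is precisely the nondegenerate equation $(\iota_{X^\Ds}\d\alpha)\restr{\Ds}=-(\d h)\restr{\Ds}$, and then verifying that the $X_\cae$ manufactured from a solution is genuinely contact --- i.e.\ that $\cL_{X_\cae}\alpha$ really is a multiple of $\alpha$, not merely annihilated by $\Ds$ --- together with keeping signs straight in the identification of $\d\alpha\restr{\Ds\times\Ds}$ with $\Lv_\Ds$. Everything else is formal bookkeeping.
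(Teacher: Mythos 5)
Your proof is correct and follows essentially the same route as the paper: both arrive at the identical local formula $X_\cae=hR-(\d\alpha\restr{\Ds\times\Ds})^{-1}\bigl((\d h)\restr\Ds\bigr)$ (the paper's \eqref{eq:Reeb-change}) and both deduce uniqueness from the nondegeneracy of the Levi form. The only organizational difference is that the paper first identifies $X_\cae$ as the Reeb field of $\cae^{-1}\cf_\Ds$ where $\cae$ is nonvanishing and then observes the formula extends over the zero set, whereas you solve the contact equation $\cL_X\alpha=\lambda\alpha$ directly via Cartan's formula, which verifies the contact property at every point at once and so sidesteps the extension step.
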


\noindent There is thus a \emph{contact Lie algebra} $\con(\Sm,\Ds)$ of
sections $\cae$ of $T\Sm/\Ds$ under the \emph{Jacobi bracket}
\[
[\cae,\sas]:=\cf_\Ds([X_\cae, X_\sas])=\cL_{X_\cae}
\sas=-\cL_{X_{\sas}}\cae.
\]
Now suppose $J\in\End(\Ds)$ is a CR structure on $(N,\Ds)$; then we obtain a
second order linear differential operator $\cae\mapsto \cL_{X_\cae} J$ on
$\con(\Sm,\Ds)$. Its kernel
\[
\crJ(\Sm,\Ds,J):=\{\cae\in \con(\Sm,\Ds): \cL_{X_\cae} J =0\}
\]
is a Lie subalgebra of $\con(\Sm,\Ds)$, whose elements $\cae$ correspond to CR
vector fields $X_\cae$ on $\Sm$. If moreover $(\Ds,J)$ is strictly
pseudo-convex then $TN/\Ds$ has an orientation whose positive sections $\sas$
are those for which $\sas^{-1} \Lv_\Ds(\cdot,J\cdot)$ is positive
definite. Note that $\sas^{-1} \Lv_\Ds=\d\cf_\sas\restr\Ds$ where
$\cf_\sas:=\sas^{-1}\cf_\Ds$ is the contact form defined by $\sas$. We let
$\con_+(\Sm,\Ds)\sub\con(\Sm,\Ds)$ be the open cone of positive sections
$\sas$ of $T\Sm/\Ds$. We then have the following fundamental definitions
(see~e.g.~\cite{BGS}).

\begin{defn}\label{d:Sasaki-cone} Let $(\Sm, \Ds, J)$ be a strictly
pseudo-convex CR manifold. Then the \emph{Sasaki cone} of $(\Sm, \Ds, J)$ is
$\crJ_+(\Sm,\Ds,J):=\crJ(\Sm,\Ds,J)\cap\con_+(\Sm,\Ds)$. If
$\crJ_+(\Sm,\Ds,J)$ is nonempty then $(\Sm,\Ds,J)$ is said to be of
\emph{Sasaki type}, an element $\sas\in\crJ_+(\Sm,\Ds,J)$ is called a
\emph{Sasaki structure} on $(\Sm,\Ds,J)$, with \emph{Sasaki--Reeb vector
  field} $X_\sas$, and $(\Sm,\Ds,J,\sas)$ is called a \emph{Sasaki
  manifold}. We say $\sas$ is \emph{quasi-regular} if the flow of $X_\sas$
generates an $\Sph^1$ action on $\Sm$, and moreover \emph{regular} if this
action is free.
\end{defn}

The following well-known construction provides a standard way (see
e.g.~\cite{BG-book}) to extend geometric notions on K\"ahler manifolds to
Sasaki manifolds.

\begin{ex}\label{e:regular} Let $(M, J, g, \omega)$ be a K\"ahler manifold
such that $[\omega/2\pi]$ is an integral de Rham class.  Then there is a
principal $\Sph^1$-bundle $\pi\colon \Sm\to M$ with a connection $1$-form
$\cf$ satisfying $\d\cf = \pi^* \omega$. Thus $(\Sm,\Ds,J,\sas)$ is a Sasaki
manifold, where $\Ds=\ker\cf\leq T\Sm$, $J$ is the pullback of the complex
structure on $TM$ to $\Ds\cong\pi^*TM$ and $\sas$ is the image in $T\Sm/\Ds$
of the generator $X_\sas$ of the $\Sph^1$ action (with $\cf(X_\sas)=1$, so
$\cf=\cf_\sas$).
\end{ex}

Conversely, if $\sas\in\crJ_+(\Sm,\Ds,J)$ is (quasi-)regular, then $\Sm$ is a
principal $\Sph^1$-bundle (or orbibundle) $\pi\colon \Sm\to M$ over a K\"ahler
manifold (or orbifold) $M$. Irrespective of regularity, this correspondence
between K\"ahler geometry and Sasaki geometry holds locally: any point of a
Sasaki manifold $(\Sm,\Ds,J,\sas)$ has a neighbourhood in which the leaf space
$M$ of the flow of $X_\sas$ is a manifold and has a K\"ahler structure
$(g,J,\omega)$ induced, using the identification $\Ds \cong\pi^*TM$, by the
\emph{transversal K\"ahler structure} $(g_\sas, J, \omega_\sas)$ on $\Ds$,
where $\omega_\sas:=\d\cf_\sas\restr\Ds$ and $g_\sas:= \omega_\sas(\cdot,
J\cdot)$. Indeed $g_\sas$, $J$, and $\omega_\sas$ are all $X_\sas$-invariant,
so they all descend to $M$, and we refer to $(M,g,J,\omega)$ as a
\emph{Sasaki--Reeb quotient} of $(\Sm,\Ds,J,\sas)$.

For $\sas\in \crJ(\Sm, \Ds, J)$, we set
\begin{align*}
\con^\sas&:=\{\cae\in \con(\Sm,\Ds)\,|\, [\sas,\cae]=0\},\\
\crJ^\sas&:=\con^\sas\cap\crJ(\Sm, \Ds, J)\qquad\text{and}\qquad
\crJ_+^\sas:=\con^\sas\cap\crJ_+(\Sm, \Ds, J).
\end{align*}
If in addition $\sas\in \crJ_+(\Sm, \Ds, J)$, then
\[
C^\infty_\Sm(\R)^\sas:=\{f\in C^\infty_\Sm(\R): \d f(X_\sas)=0\}
\]
is a Lie algebra under the \emph{transversal Poisson bracket}
$\{f_1,f_2\}:=-\omega_\sas^{-1} (\d f_1\restr\Ds,\d f_2\restr\Ds)$, and we
have the following elementary but central lemma.

\begin{lemma}\label{l:potential} The map $f\mapsto \cae=f\sas$ is a Lie algebra
isomorphism from $C^\infty_\Sm(\R)^\sas$ to $\con^\sas$, and
$\cae\in\crJ^\sas$ if and only if $f$ is a transversal Killing potential for
$(g_\sas, \omega_\sas)$, i.e., $-\omega_\sas^{-1} (\d f)$ is a transversal
Killing vector field for $g_\sas$.
\end{lemma}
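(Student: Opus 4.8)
The plan is to treat the three assertions in turn, working throughout with the contact form $\cf_\sas=\sas^{-1}\cf_\Ds$ and its Reeb field $X_\sas$, for which $\cf_\sas(X_\sas)=1$ and, since $X_\sas$ is contact, $\iota_{X_\sas}\d\cf_\sas=0$. To see that $f\mapsto f\sas$ is a bijection onto $\con^\sas$, note first that, because $\sas\in\con_+(\Sm,\Ds)$ is a nowhere-vanishing section of $T\Sm/\Ds$, every section is uniquely of the form $f\sas$ with $f\in C^\infty_\Sm(\R)$, so the map is already a linear isomorphism onto $\con(\Sm,\Ds)$. Since $X_\sas$ is contact, $\cL_{X_\sas}$ descends to a derivation on $C^\infty_\Sm(T\Sm/\Ds)$ over $h\mapsto\d h(X_\sas)$ with $\cL_{X_\sas}\sas=[\sas,\sas]=0$, so $[\sas,f\sas]=\cL_{X_\sas}(f\sas)=(\d f(X_\sas))\sas$, which vanishes precisely when $f\in C^\infty_\Sm(\R)^\sas$; this identifies the image as $\con^\sas$.

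Next I would show the map intertwines the brackets. For $f\in C^\infty_\Sm(\R)^\sas$ the contact vector field $X_{f\sas}$ is characterised by $\cf_\sas(X_{f\sas})=f$ and, by Cartan's formula for $\cL_{X_{f\sas}}\cf_\sas$ together with $\iota_{X_\sas}\d\cf_\sas=0$, by $\iota_{X_{f\sas}}\d\cf_\sas=-\d f$; writing $X_{f\sas}=fX_\sas+X^\Ds_{f\sas}$ with $X^\Ds_{f\sas}\in C^\infty_\Sm(\Ds)$ this gives $\iota_{X^\Ds_{f\sas}}\omega_\sas=-\d f\restr\Ds$, i.e.\ $X^\Ds_{f\sas}=-\omega_\sas^{-1}(\d f\restr\Ds)$, the transversal hamiltonian vector field of $f$ featuring in the statement. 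Then for $f_1,f_2\in C^\infty_\Sm(\R)^\sas$ I would substitute $\d f_i=-\iota_{X_{f_i\sas}}\d\cf_\sas$ into the identity $\alpha([X,Y])=X(\alpha(Y))-Y(\alpha(X))-\d\alpha(X,Y)$ with $\alpha=\cf_\sas$; it collapses to $\cf_\sas([X_{f_1\sas},X_{f_2\sas}])=\d\cf_\sas(X_{f_1\sas},X_{f_2\sas})$, and then, expanding $X_{f_i\sas}=f_iX_\sas+X^\Ds_{f_i\sas}$ and using $\iota_{X_\sas}\d\cf_\sas=0$, to $\omega_\sas(X^\Ds_{f_1\sas},X^\Ds_{f_2\sas})=\{f_1,f_2\}$. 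Since $[f_1\sas,f_2\sas]=\cf_\Ds([X_{f_1\sas},X_{f_2\sas}])=\cf_\sas([X_{f_1\sas},X_{f_2\sas}])\,\sas$, this is $\{f_1,f_2\}\,\sas$, so with the first step $f\mapsto f\sas$ is a Lie algebra isomorphism.

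For the remaining assertion I would pass to a local Sasaki--Reeb quotient $(M,g,J,\omega)$. If $\cae=f\sas\in\con^\sas$ then $X_{f\sas}$ is $X_\sas$-invariant and descends to a vector field $\bar X$ on $M$, which by the computation above is the hamiltonian vector field $-\omega^{-1}(\d\bar f)$ of the induced function $\bar f$. Since $J$ on $\Ds\cong\pi^*TM$ is $\pi$-related to $J$ on $TM$ and $X_{f\sas}$ to $\bar X$, the section $\cL_{X_{f\sas}}J$ of $\End(\Ds)$ descends to $\cL_{\bar X}J$, so $\cae\in\crJ^\sas$ if and only if $\cL_{\bar X}J=0$. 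On the K\"ahler manifold $M$ the field $\bar X$ is hamiltonian, so $\cL_{\bar X}\omega=0$, and then from $g=\omega(\cdot,J\cdot)$ with $\omega$ non-degenerate one gets $\cL_{\bar X}g=\omega(\cdot,(\cL_{\bar X}J)\cdot)$; hence $\cL_{\bar X}J=0$ if and only if $\bar X$ is Killing, i.e.\ if and only if $\bar f$---equivalently $f$---is a transversal Killing potential for $(g_\sas,\omega_\sas)$.

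I expect the one genuinely delicate point to be this last descent, namely checking that $X_\sas$-invariance of the transversal complex structure makes it descend to $M$ and that the Lie derivative commutes with the descent, so that the operator $\cae\mapsto\cL_{X_\cae}J$ of the excerpt is faithfully recorded on the quotient. This can be bypassed entirely on $\Sm$: contactness of $X_{f\sas}$ lets $\cL_{X_{f\sas}}$ act on $C^\infty_\Sm(\End(\Ds))$, and it annihilates $\omega_\sas=\d\cf_\sas\restr\Ds$ because $\cL_{X_{f\sas}}\d\cf_\sas=\d(-\d f)=0$, so from $g_\sas=\omega_\sas(\cdot,J\cdot)$ we get $\cL_{X_{f\sas}}g_\sas=\omega_\sas(\cdot,(\cL_{X_{f\sas}}J)\cdot)$; thus $\cL_{X_{f\sas}}J=0$ is equivalent to $\cL_{X_{f\sas}}g_\sas=0$, which is exactly the statement that $-\omega_\sas^{-1}(\d f)=X^\Ds_{f\sas}$ is a transversal Killing vector field for $g_\sas$.
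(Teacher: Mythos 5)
Your proposal is correct and follows essentially the same route as the paper: the bracket is computed from the decomposition $X_{f\sas}=fX_\sas-\omega_\sas^{-1}(\d f\restr\Ds)$ (which is exactly the paper's formula \eqref{eq:Reeb-change}), and the CR condition is identified with the Killing condition by descending to a local Sasaki--Reeb quotient. Your closing intrinsic argument on $\Sm$ (using $\cL_{X_{f\sas}}\omega_\sas=0$ to trade $\cL_{X_{f\sas}}J$ for $\cL_{X_{f\sas}}g_\sas$) is a clean way to dispose of the descent step, which the paper handles by noting that $\cL_{X_\cae}J$ is horizontal and $X_\sas$-invariant.
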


Thus we obtain elements of $\crJ^\sas$ as pullbacks of Killing potentials from
(local) Sasaki--Reeb quotients of $\Sm$ by $\sas$. The Levi-Civita connection
on Sasaki--Reeb quotients pulls back to a connection $\nabla^\sas$ on $\Ds$
preserving $(g_\sas,J,\omega_\sas)$, which turns out to be (see
e.g.~\cite[\S4]{david:weyltanaka}) the so-called \emph{Tanaka--Webster
  connection}~\cite{Webster} of $(\Ds, J,\sas)$.  Thus the scalar curvature of
Sasaki--Reeb quotients pulls back to the \emph{Tanaka--Webster scalar
  curvature} $\Scal(g_\sas)$ of $\nabla^\sas$, and hence $(\Sm,\Ds,J,\sas)$ is
CSC, i.e., $\Scal(g_\sas)$ is constant, if and only if its Sasaki--Reeb
quotients are. We may define (weighted) extremal Sasaki structures similarly.

\begin{defn}\label{d:Sasaki-(xi,wt)-extremal} Let $(\Sm, \Ds, J, \sas)$ be a
Sasaki manifold and $\cae=f\sas \in \crJ^\sas$.  The \emph{$(\cae,\wt)$ scalar
  curvature $\Scal_{\cae,\wt}(g_\sas)$ of $\sas$} is the function induced on
$\Sm$ by the $(f,\wt)$ scalar curvature~\eqref{(f,wt)-scalar-curvature} on
Sasaki--Reeb quotients.  We say that $\sas$ is \emph{$(\cae,\wt)$-extremal} if
$\Scal_{\cae,\wt}(g_\sas)\sas\in \crJ(\Sm,\Ds,J)$. For constant $f$, this
reduces to extremality of $\sas$ in the sense of \cite{BGS}.
\end{defn}
\begin{ex}\label{e:regular-(xi,wt)-extremal} Lemma~\ref{l:potential}
shows that any quasi-regular Sasaki manifold over an $(f,\wt)$-extremal
orbifold $(M, J, g, \omega)$ is $(\cae,\wt)$-extremal, with $\cae=f\sas$,
cf.~Example~\ref{e:regular} in the regular case.
\end{ex}

For $(f,\wt)$-extremal metrics, we have noted that the weight $\wt=2m$ has a
special interpretation in conformal geometry.  The next lemma provides an
analogous interpretation in CR geometry mentioned above of the weight
$\wt=m+2$ for $(\cae,\wt)$-extremal metrics.

\begin{lemma}\label{l:main}
For any $\cae\in\crJ(\Sm,\Ds,J)$, $\sigma(\cae):=
\Scal_{\cae,m+2}(g_\sas)\,\sas\in\con(\Sm,\Ds)$ is independent of
$\sas\in\crJ_+^\cae$.  Hence $\cae\mapsto\sigma(\cae)$ is a second order
quadratic differential operator, with $\sigma(\cae)=\Scal(g_\cae)\,\cae$ in
the case that $\cae\in \crJ_+(\Sm,\Ds,J)$.
\end{lemma}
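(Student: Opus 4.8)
\emph{Plan.} We may assume $\cae\neq 0$, as otherwise $\Scal_{\cae,m+2}(g_\sas)\equiv 0$ and $\sigma(\cae)\equiv 0$ for every $\sas$. The strategy is to exhibit $\sigma(\cae)$ as a curvature quantity built from the $1$-form $\cae^{-1}\cf_\Ds$ and the CR structure alone. Given $\sas\in\crJ_+^\cae$, I would first put $f:=\cae/\sas\in C^\infty_\Sm(\R)$, the pointwise quotient of the sections $\cae$ and $\sas$ of the line bundle $T\Sm/\Ds$, so that $\cae=f\sas$; since $\sas\in\crJ_+^\cae$ we have $\cae\in\con^\sas$, so $f\in C^\infty_\Sm(\R)^\sas$ by Lemma~\ref{l:potential}, and since in addition $\cae\in\crJ(\Sm,\Ds,J)$ the same lemma shows $f$ is a transversal Killing potential for $(g_\sas,\omega_\sas)$. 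Hence $\Scal_{\cae,m+2}(g_\sas)=f^2\Scal(g_\sas)-2(m+1)f\Delta_{g_\sas}f-(m+1)(m+2)|\d f|^2_{g_\sas}$, the operators being the transversal ones of $(g_\sas,\omega_\sas)$. On the open set $\Sm^+:=\{\cae>0\}$ the $1$-form $\cf_\cae:=\cae^{-1}\cf_\Ds=f^{-1}\cf_\sas$ is a contact form whose Reeb field is the CR vector field $X_\cae$, so $(\Sm^+,\Ds,J,\cae)$ is a Sasaki manifold, with transversal K\"ahler structure $\omega_\cae=\d\cf_\cae|_\Ds=f^{-1}\omega_\sas$ and $g_\cae=f^{-1}g_\sas$ on $\Ds$ (using $\cf_\sas|_\Ds=0$). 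The crucial point is the pointwise identity
\[
\Scal(g_\cae)=f^{-1}\bigl(f^2\Scal(g_\sas)-2(m+1)f\Delta_{g_\sas}f-(m+1)(m+2)|\d f|^2_{g_\sas}\bigr)=f^{-1}\Scal_{\cae,m+2}(g_\sas)\quad\text{on }\Sm^+,
\]
which gives $\sigma(\cae)=\Scal_{\cae,m+2}(g_\sas)\,\sas=f\,\Scal(g_\cae)\,\sas=\Scal(g_\cae)\,(f\sas)=\Scal(g_\cae)\,\cae$ on $\Sm^+$, an expression depending only on $\cae$ and $(\Sm,\Ds,J)$.

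To prove this identity I would invoke the transformation law of the Tanaka--Webster (equivalently, transversal) scalar curvature under the conformal rescaling $\cf_\cae=f^{-1}\cf_\sas$ of the contact form --- a standard fact of CR geometry going back to Webster; see \cite{Webster} and \cite[\S4]{david:weyltanaka}. For a basic function $f$, its sublaplacian and its Levi-form gradient norm coincide with the transversal quantities $\Delta_{g_\sas}f$ and $|\d f|^2_{g_\sas}$, and the transformation law then reduces to the usual conformal-change formula for the scalar curvature of $f^{-2}g_\sas$ \emph{with the real dimension replaced by $m+2$}; this substitution is exactly what produces the coefficients $2(m+1)$ and $(m+1)(m+2)$, i.e.\ those of \eqref{(f,wt)-scalar-curvature} for $\wt=m+2$, and it is the conceptual reason why $m+2$ is the CR-natural weight here --- the analogue of the fact recalled above that for $\wt=2m$ the quantity \eqref{(f,wt)-scalar-curvature} is the honest conformal scalar curvature of $(1/f^2)g$. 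I expect the only genuinely technical step to be pinning down normalisations so that the Webster transformation law produces exactly these coefficients; alternatively one can verify the identity by a direct computation with the Tanaka--Webster connection.

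It remains to pass from $\Sm^+$ to all of $\Sm$ and to record the last two assertions. Because $f$ is a transversal Killing potential, the corresponding transversal Killing field vanishes identically on any open set where $f\equiv 0$, so by unique continuation (and $\cae\neq 0$) the open set $\Sm^\circ:=\{\cae\neq 0\}=\Sm^+\sqcup\Sm^-$, $\Sm^-:=\{\cae<0\}$, is dense, and $\Sm^\pm$ do not depend on $\sas$. On $\Sm^-$ the section $-\cae$ is positive and $-f$ is a transversal Killing potential, while \eqref{(f,wt)-scalar-curvature} is unchanged under $f\mapsto -f$; applying the case already treated to $-\cae$ yields $\sigma(\cae)=-\Scal(g_{-\cae})\,\cae$ on $\Sm^-$, again independent of $\sas$. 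As $\sigma(\cae)$ is a smooth section and $\Sm^\circ$ is dense, the first assertion follows. If $\cae\in\crJ_+(\Sm,\Ds,J)$ then $\Sm^+=\Sm$ and the displayed identity gives $\sigma(\cae)=\Scal(g_\cae)\,\cae$. Finally, for a fixed $\sas$ the assignment $\cae\mapsto f=\cae/\sas$ is pointwise (of order $0$) and $f\mapsto\Scal_{\cae,m+2}(g_\sas)$ is quadratic in $f$ and of order at most $2$ by the formula above, so $\cae\mapsto\sigma(\cae)$ is a quadratic differential operator of order at most $2$.
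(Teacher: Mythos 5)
Your proposal is correct and follows essentially the same route as the paper: both reduce the key identity $\Scal_{\cae,m+2}(g_\sas)=f\,\Scal(g_\cae)$ on $\{\cae\neq 0\}$ to the transformation law of the Tanaka--Webster scalar curvature under the change of contact form $\cf_\cae=f^{-1}\cf_\sas$ (the paper cites \cite[(2.9)]{JL} for exactly this computation), and then extend by density. The only cosmetic difference is in the density step: you invoke unique continuation for the (transversal) Killing field to show $\{\cae\neq 0\}$ is dense, whereas the paper avoids this by noting that on any open set where $\cae=0$ the expression \eqref{eq:invariant1} is manifestly zero for every $\sas$, so independence already holds on the dense set $\{\cae\neq0\}\cup\mathrm{int}\{\cae=0\}$.
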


We now emphasise a key feature of Sasaki geometry, which was used in
\cite{CFO,FOW,legendre2,MSY2} to construct CSC Sasaki manifolds from
K\"ahler manifolds which are not necessarily CSC (see
also~\cite{BHLT,FO0}). Namely, $\crJ_+(\Sm, \Ds, J)$ is open in
$\crJ(\Sm, \Ds, J)$, so if $(\Sm, \Ds, J)$ is of Sasaki type, and
$\dim \crJ(\Sm, \Ds, J)\geq 2$, we obtain a family of Sasaki
structures $\sas$ on $\Sm$, inducing transversal K\"ahler structures
on $(\Ds, J)$. With this in mind, the following is our main result,
which is an immediate consequence of Lemma~\ref{l:main}, but has many
ramifications.

\begin{thm}\label{thm:main} Let $(\Sm, \Ds, J)$ be a CR $(2m+1)$-manifold
with Sasaki cone $\crJ_+(\Sm, \Ds, J)$. Then, for any $\sas, \cae \in
\crJ_+(\Sm, \Ds, J)$ with $[\sas,\cae]=0$, $(\Sm, \Ds, J, \sas)$ is $(\cae,
m+2)$-extremal if and only if $(\Sm, \Ds, J, \cae)$ is extremal.
\end{thm}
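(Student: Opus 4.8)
The plan is to derive Theorem~\ref{thm:main} directly from Lemma~\ref{l:main}. Note first that the hypotheses $\sas,\cae\in\crJ_+(\Sm,\Ds,J)$ and $[\sas,\cae]=0$ say precisely that $\cae\in\crJ_+^\sas$ and, symmetrically, $\sas\in\crJ_+^\cae$; in particular the situation is symmetric in the two Sasaki structures, which is already a good sign since the conclusion relates $(\cae,m+2)$-extremality of $\sas$ to (ordinary) extremality of $\cae$. By Lemma~\ref{l:potential}, since $\cae\in\con^\sas$ we may write $\cae=f\sas$ for a unique $f\in C^\infty_\Sm(\R)^\sas$, and because $\cae\in\crJ^\sas$ this $f$ is a transversal Killing potential for the transversal K\"ahler structure $(g_\sas,\omega_\sas)$; moreover $f>0$ since $\cae\in\crJ_+^\sas$ (positivity of $\cae$ relative to $\sas$). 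Thus $\Scal_{\cae,m+2}(g_\sas)$ is exactly the function $\Scal_{f,m+2}(g_\sas)$ obtained from \eqref{(f,wt)-scalar-curvature} on Sasaki--Reeb quotients, and by Definition~\ref{d:Sasaki-(xi,wt)-extremal}, $(\Sm,\Ds,J,\sas)$ is $(\cae,m+2)$-extremal if and only if $\sigma(\cae):=\Scal_{\cae,m+2}(g_\sas)\,\sas\in\crJ(\Sm,\Ds,J)$.

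Next I would invoke the first assertion of Lemma~\ref{l:main}: the element $\sigma(\cae)\in\con(\Sm,\Ds)$ does not depend on the choice of $\sas\in\crJ_+^\cae$. This is the crucial input, as it decouples the expression $\sigma(\cae)$ from the auxiliary Sasaki structure $\sas$ and makes it an invariant of $\cae$ alone. Combined with the last clause of Lemma~\ref{l:main}, which gives $\sigma(\cae)=\Scal(g_\cae)\,\cae$ whenever $\cae\in\crJ_+(\Sm,\Ds,J)$ (using $\cae$ itself as the Sasaki structure, which is legitimate since $\cae\in\crJ_+^\cae$), we obtain the identity
\[
\Scal_{\cae,m+2}(g_\sas)\,\sas \;=\; \Scal(g_\cae)\,\cae
\]
as elements of $\con(\Sm,\Ds)$. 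The left-hand side lies in $\crJ(\Sm,\Ds,J)$ exactly when $(\Sm,\Ds,J,\sas)$ is $(\cae,m+2)$-extremal; the right-hand side lies in $\crJ(\Sm,\Ds,J)$ exactly when $\Scal(g_\cae)\,\cae\in\crJ(\Sm,\Ds,J)$, which by Definition~\ref{d:Sasaki-(xi,wt)-extremal} (the constant-$f$ case, reducing to extremality in the sense of \cite{BGS}) is precisely extremality of $(\Sm,\Ds,J,\cae)$. Since the two sides are literally the same element of $\con(\Sm,\Ds)$, one belongs to the subspace $\crJ(\Sm,\Ds,J)$ if and only if the other does, and the theorem follows.

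The only subtlety to check carefully is the bookkeeping ensuring that the two applications of Lemma~\ref{l:main} — once with the auxiliary structure $\sas$ and once with the auxiliary structure $\cae$ — refer to the same $\sigma(\cae)$; this is exactly the content of the independence statement, so there is no real obstacle, merely the need to confirm that $\cae\in\crJ_+^\cae$ (trivially true, as $[\cae,\cae]=0$ and $\cae\in\crJ_+$) and $\cae\in\crJ_+^\sas$, $\sas\in\crJ_+^\cae$ (both equivalent to the hypotheses $\sas,\cae\in\crJ_+$ and $[\sas,\cae]=0$). I would also remark that the argument is manifestly symmetric: interchanging $\sas$ and $\cae$ shows equally that $(\Sm,\Ds,J,\cae)$ is $(\sas,m+2)$-extremal iff $(\Sm,\Ds,J,\sas)$ is extremal, so there is in fact a whole web of such equivalences across the commuting pairs in the Sasaki cone. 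The proof is therefore essentially a one-line consequence of Lemma~\ref{l:main} together with the identifications furnished by Lemmas~\ref{l:contact}--\ref{l:potential}; all the genuine work is hidden in Lemma~\ref{l:main}, whose proof — the independence of $\sigma(\cae)$ from $\sas$ — is the true mathematical heart of the matter and where I would expect a direct computation with the Tanaka--Webster connection, or a transversal K\"ahler reduction argument comparing the two transversal structures $g_\sas$ and $g_\cae$ on $(\Ds,J)$, to be needed.
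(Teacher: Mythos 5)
Your proposal is correct and follows exactly the route the paper intends: the theorem is stated there as ``an immediate consequence of Lemma~\ref{l:main}'', and your argument---identifying both extremality conditions with the single condition $\sigma(\cae)\in\crJ(\Sm,\Ds,J)$ via the $\sas$-independence of $\sigma(\cae)$ and the identity $\sigma(\cae)=\Scal(g_\cae)\,\cae$---is precisely that deduction, with the bookkeeping ($\cae\in\crJ_+^\sas$, $\sas\in\crJ_+^\cae$, $\cae\in\crJ_+^\cae$) done carefully. You are also right that all the substantive work lives in Lemma~\ref{l:main}, which the paper proves by the Tanaka--Webster change-of-contact-form computation you anticipate.
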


Together with Example~\ref{e:regular-(xi,wt)-extremal}, this result shows
that the constructions of $(f, m+2)$-extremal K\"ahler metrics available in
\cite{ambitoric1,LeB1,LeB2,FO0,KTF,lahdili1} yield many new extremal Sasaki
metrics.

We further observe that since $\cL_{X_\cae} (\Scal(g_\cae))=0$,
$[\cae,\sigma(\cae)]=0$.  Hence the equivalent conclusions of
Theorem~\ref{thm:main} correspond to the occurrence of $\sigma(\cae)$ in
$\crJ^\cae$; we then have $\pm\sigma(\cae) \in \crJ_+^\cae$ if and only if the
scalar curvature $\Scal(g_\cae)$ is everywhere positive or everywhere
negative, in which case we can take $\sas=\pm\sigma(\cae)$ in
Theorem~\ref{thm:main} to obtain $\Scal_{\cae,m+2}(g_\sas)=\pm 1$.

\begin{cor}\label{cor:csc} Let $(\Sm, \Ds, J,\cae)$ be an extremal
Sasaki $(2m+1)$-manifold. Then the $\crJ_+^\sas$-family of $(\cae,
m+2)$-extremal Sasaki structures on $(\Sm, \Ds, J)$ contains a Sasaki
structure $\sas:=\pm\sigma(\cae)$ of constant nonzero $(\cae, m+2)$ scalar
curvature if and only if the extremal Sasaki structure $\cae$ has nowhere zero
scalar curvature. Thus there is an equivalence between Sasaki manifolds
$(\Sm,\Ds,J,\sas)$ of constant nonzero $(\cae, m+2)$ scalar curvature and
extremal Sasaki manifolds $(\Sm, \Ds, J,\cae)$ with nowhere zero scalar
curvature.
\end{cor}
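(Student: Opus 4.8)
The plan is to obtain the corollary by combining Lemma~\ref{l:main}, Theorem~\ref{thm:main}, and the remarks recorded immediately before the statement; the only genuinely new ingredient is an orientation/sign argument for the line bundle $T\Sm/\Ds$, and I assume throughout (as is standard) that $\Sm$ is connected.

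First I would identify the relevant section of $T\Sm/\Ds$. Since $(\Sm,\Ds,J,\cae)$ is extremal, $\Scal(g_\cae)$ is a transversal Killing potential for $(g_\cae,\omega_\cae)$, so Lemma~\ref{l:potential} places $\Scal(g_\cae)\,\cae$ in $\crJ^\cae\sub\con(\Sm,\Ds)$; and since $\cae\in\crJ_+(\Sm,\Ds,J)$, the last clause of Lemma~\ref{l:main} identifies this section with $\sigma(\cae)$. Thus $\sigma(\cae)=\Scal(g_\cae)\,\cae$, and $[\cae,\sigma(\cae)]=0$ automatically. Now $\crJ_+(\Sm,\Ds,J)$ is the cone of sections of $T\Sm/\Ds$ positive for the CR orientation, and $\cae$ is one of these; hence, for a sign $\eps=\pm1$, the section $\eps\,\sigma(\cae)=(\eps\,\Scal(g_\cae))\,\cae$ lies in $\crJ_+(\Sm,\Ds,J)$ exactly when $\eps\,\Scal(g_\cae)>0$ on all of $\Sm$. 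By connectedness, such an $\eps$ exists if and only if $\Scal(g_\cae)$ is nowhere zero, and then $\eps$ is the constant sign of $\Scal(g_\cae)$; moreover, since $[\cae,\sigma(\cae)]=0$, in that case $\sas:=\eps\,\sigma(\cae)$ lies in $\crJ_+^\cae$, i.e., is a Sasaki structure on $(\Sm,\Ds,J)$ commuting with $\cae$, hence $(\cae,m+2)$-extremal by Theorem~\ref{thm:main}.

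Next I would evaluate Lemma~\ref{l:main} at this $\sas\in\crJ_+^\cae$, which gives $\sigma(\cae)=\Scal_{\cae,m+2}(g_\sas)\,\sas=\eps\,\Scal_{\cae,m+2}(g_\sas)\,\sigma(\cae)$, hence $\Scal_{\cae,m+2}(g_\sas)=\eps=\pm1$, a nonzero constant; this proves the ``only if'' half of the first assertion and fixes the normalisation. For the ``if'' half: if $\sas:=\pm\sigma(\cae)$ is a Sasaki structure at all, that is, lies in $\crJ_+(\Sm,\Ds,J)$, then by the sign discussion $\Scal(g_\cae)$ has constant sign on the connected $\Sm$, so is nowhere zero, and the same computation then forces $\Scal_{\cae,m+2}(g_\sas)=\pm1$, so the clause ``of constant nonzero $(\cae,m+2)$ scalar curvature'' is automatic. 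For the closing equivalence, the construction above sends an extremal Sasaki structure $\cae$ with $\Scal(g_\cae)$ nowhere zero to $\sas=\pm\sigma(\cae)$ with $\Scal_{\cae,m+2}(g_\sas)=\pm1$; conversely, given $\sas\in\crJ_+(\Sm,\Ds,J)$ and $\cae\in\crJ_+^\sas$ with $\Scal_{\cae,m+2}(g_\sas)$ a nonzero constant $c$, the section $\Scal_{\cae,m+2}(g_\sas)\,\sas=c\,\sas$ lies in $\crJ(\Sm,\Ds,J)$, so $(\Sm,\Ds,J,\sas)$ is $(\cae,m+2)$-extremal and $(\Sm,\Ds,J,\cae)$ is extremal by Theorem~\ref{thm:main}, while Lemma~\ref{l:main} gives $\sigma(\cae)=c\,\sas=\Scal(g_\cae)\,\cae$; since $\cae$ and $\sas$ are both positive sections of the line $T\Sm/\Ds$, this last identity shows $\Scal(g_\cae)$ has the constant sign of $c$, hence is nowhere zero, and normalising $\Scal_{\cae,m+2}(g_\sas)$ to $\pm1$ rescales $\sas$ to $\pm\sigma(\cae)$, so the two constructions are mutually inverse.

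I expect no analytic difficulty here; the only point requiring care is the bookkeeping with the CR orientation --- that $\eps\,\sigma(\cae)$ lies in the Sasaki cone precisely when $\eps\,\Scal(g_\cae)>0$, which uses that $\cae$ is itself a positive section, together with the connectedness of $\Sm$ needed to pass from ``nowhere zero'' to ``constant sign'' --- and being explicit about the constant $c$ when stating the final equivalence, since the bijection is really between the normalised objects, namely those with $\Scal_{\cae,m+2}(g_\sas)=\pm1$.
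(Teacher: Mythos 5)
Your proposal is correct and follows essentially the same route as the paper, which derives the corollary from the remark preceding it: $\sigma(\cae)=\Scal(g_\cae)\,\cae$ commutes with $\cae$, lies in $\crJ_+^\cae$ up to sign precisely when $\Scal(g_\cae)$ has a constant nonzero sign, and then Lemma~\ref{l:main} forces $\Scal_{\cae,m+2}(g_\sas)=\pm1$ for $\sas=\pm\sigma(\cae)$, with Theorem~\ref{thm:main} supplying the extremality equivalence. Your write-up merely makes explicit the orientation bookkeeping and the converse normalisation that the paper leaves implicit.
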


The proofs of Lemmas~\ref{l:contact}--\ref{l:main} are straightforward
rephrasings of standard results in CR geometry, but for the convenience of the
reader, we indicate their proofs in Section~\ref{s:background}.  In Section~\ref{s:GIT}, we define, on a
compact contact manifold of Sasaki type, a formal GIT setting for the search
for $(\cae, \wt)$-extremal Sasaki structures, extending the picture in
\cite{He} and providing a conceptual explanation for the key Lemma~\ref{l:main} above.  Then in the
rest of the paper we return to K\"ahler geometry and applications of
Theorem~\ref{thm:main}, which gives a way of relating different K\"ahler
geometries locally or (under suitable rationality conditions) globally.  We
formalize this as follows.

\begin{defn}\label{CR f-twist} Let $(M, g,J, \omega)$ be a K\"ahler manifold
and $f$ a positive Killing potential. We say that $(\tilde M, \tilde g, \tilde
J,\tilde \omega)$ is a \emph{CR twist} of $M$ by $f$, or an \emph{$f$-twist}
for short, if it is a Sasaki--Reeb quotient by the Sasaki structure $f\sas$ on
the Sasaki manifold $(\Sm, \Ds ,J, \sas)$ corresponding (over any open subset
where $[\omega/2\pi]$ is integral) to $M$ via Example~\ref{e:regular}.
\end{defn}
A CR $f$-twist can be seen as a special case of the twist construction of Swann \cite{swann} (see also \cite{joyce,MMW}) which has been used to study different geometric structures. In these terms, Theorem~\ref{thm:main} shows that any extremal K\"ahler metric
can (locally) be obtained from a $(f, m+2)$-extremal K\"ahler metric, via a CR
$f$-twist, while Corollary~\ref{cor:csc} establishes an equivalence between
K\"ahler metrics of constant $(f, m+2)$ scalar curvature and extremal K\"ahler
metrics of nonvanishing scalar curvature.

In real dimension $2m=4$, the latter reduces to the equivalence between
conformally Einstein--Maxwell K\"ahler metrics $(g, J, \omega, f)$ and
extremal metrics $(\tilde g, \tilde J, \tilde \omega)$ of nonvanishing scalar
curvature alluded to above: the extremal K\"ahler $4$-manifold is obtained as
the Sasaki--Reeb quotient with respect to an extremal Sasaki structure $\cae$
of $(\Sm, \Ds, J)$, whereas $(g, J, \omega, f)$ is obtained as the quotient
with respect to the Sasaki structure defined by the scalar curvature of
$\cae$. Thus our correspondence gives a conceptual explanation and
generalization of \cite[Prop.~3]{AM}, and can be used to obtain new examples
of extremal Sasaki and K\"ahler metrics from the known conformally
Einstein--Maxwell K\"ahler ones.

More generally, as any CR twist of an extremal K\"ahler metric is $(f,
m+2)$-extremal for some $f$, one of the main theses of this paper is that one
can reduce the search for extremal K\"ahler metrics to the search of $(f,
m+2)$-extremal K\"ahler metrics on simpler K\"ahler manifolds. We explore this
idea in the remainder of the paper.

\smallskip
As a warm-up, in Section~\ref{s:examples}, we consider the simplest examples:
the Bochner-flat Sasaki--Reeb quotients of CR spheres~\cite{Bryant,Webster}
and products. Then, in Section~\ref{s:toric}, we turn our attention to toric
geometry. While toric K\"ahler and Sasaki geometries have been well-studied,
to apply our theory, we develop a CR-invariant viewpoint, building
on~\cite{legendre1,legendre2,Lerman,MSY1}. In the toric case, CR-invariance
corresponds to projective invariance on the image of the momentum map, and as
an interesting side benefit, we give a manifestly projectively invariant
treatment of the Legendre transform, by relating it to a particular case of a
Bernstein--Gelfand--Gelfand resolution~\cite{BGG,Lep}, as constructed
in~\cite{CD,CSS}.  Returning the main line of the paper, we then obtain
explicit descriptions of the CR $f$-twists of toric manifolds and of toric
bundles given by the generalized Calabi ansatz, showing that the latter are CR
$f$-twists of a product metric. In Section~\ref{s:ansatz}, we recast, in terms
of the general correspondence herein, some of the explicit families of $(f,
m+2)$-extremal K\"ahler metrics, including those obtained by the regular
ambitoric ansatz in \cite{ambitoric1} and by an ansatz in \cite{ACGL}. This
leads both to a higher dimensional extension of the regular ambitoric
ansatz~\cite{ambitoric1} and to a complete classification of the $(f,
m+2)$-extremal K\"ahler metrics obtained by this ansatz. In the final
Section~\ref{s:global}, we turn to global considerations. We define the Calabi problem for $(\cae, \wt)$-extremal
Sasaki metrics, which naturally generalizes the existence problem of extremal
Sasaki metrics in a given Sasaki polarization \cite{BGS}, recently studied in
many places~\cite{BT0,BT,BV,CSz,legendre2,MSY1,MSY2,V}.  We end by
illustrating how the (non)existence of $(f,\wt)$-extremal K\"ahler metrics in
a given integral K\"ahler class of a geometrically ruled complex surface (as
studied in \cite{AMT,KTF,lahdili2}) leads both to existence and non-existence
results for extremal Sasaki metrics compatible with (possibly irregular)
Sasaki--Reeb vector fields on the corresponding contact manifolds. In
particular, we establish the following Yau--Tian--Donaldson type
correspondence.

\begin{thm}\label{thm:ruled} Let $(M, J)= P(\cO \oplus \cL) \to B$ be a compact
ruled complex surface over a Riemann surface $B$, $L$ a polarization of
$(M,J)$, and $\omega\in 2\pi c_1(L)$ an $\Sph^1$-invariant K\"ahler metric
with respect to the circle action by scalar multiplication in $\cO$.  Let
$(\Sm, \Ds, J, \sas)$ be the regular Sasaki manifold over $(M, J, \omega)$
given by Example~\textup{\ref{e:regular}}, $\cae \in \crJ_+(\Sm, \Ds, J)$ the
lift of the generator of the $\Sph^1$-action on $M$ and $\hat Z_\cae$ the
induced holomorphic vector field on $L$.  Then $\Sm$ admits a
$X_\sas$-invariant, $\Ds$-compatible CR structure which is extremal Sasaki
with respect to $\cae$ if and only if $(M, L, \hat Z_\cae)$ is analytically
relatively $(\hat Z_\cae, 4)$ \textup K-stable with respect to admissible
test-configurations in the sense of \cite{AMT}.
\end{thm}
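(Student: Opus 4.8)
The plan is to combine the CR correspondence of Theorem~\ref{thm:main} with the known Yau--Tian--Donaldson type results for $(f,4)$-extremal K\"ahler metrics on geometrically ruled surfaces. First I would invoke Theorem~\ref{thm:main} (with $m=2$) to reduce the problem: since $\sas$ and $\cae$ commute and both lie in $\crJ_+(\Sm,\Ds,J)$, the CR structure on $\Sm$ is extremal Sasaki with respect to $\cae$ if and only if $(\Sm,\Ds,J,\sas)$ is $(\cae,4)$-extremal, i.e.\ (by Definition~\ref{d:Sasaki-(xi,wt)-extremal} and Lemma~\ref{l:potential}) if and only if the transversal K\"ahler structure, which is a Sasaki--Reeb quotient identifiable with $(M,J,\omega)$, carries an $(f,4)$-extremal K\"ahler metric in $[\omega]=2\pi c_1(L)$, where $f$ is the Killing potential corresponding to $\cae$ via $\cae=f\sas$. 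Here $f$ is an affine function of the moment map of the $\Sph^1$-action by scalar multiplication in $\cO$, positive on $M$ because $\cae\in\crJ_+$; normalising, $f$ ranges over the admissible positive Killing potentials for this action. Conversely every such $f$ arises from some $\cae\in\crJ_+^\sas$, so searching over $\Ds$-compatible, $X_\sas$-invariant CR structures that are extremal with respect to $\cae$ is the same as searching over $(f,4)$-extremal K\"ahler metrics in $2\pi c_1(L)$ for the corresponding $f$.

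Next I would quote the existence/non-existence analysis for $(f,m+2)$-extremal (here $(f,4)$-extremal) K\"ahler metrics on ruled surfaces developed in the body of the paper (Section~\ref{s:global}) following \cite{AMT,KTF,lahdili2}: on $P(\cO\oplus\cL)\to B$ with the $\Sph^1$-invariant polarisation, the momentum image is an interval and the $(f,4)$-extremal equation becomes an ODE amenable to the Calabi-type ansatz. The result to cite is the statement that such a metric exists in the given K\"ahler class, for the given admissible $f$, if and only if the polarised variety is analytically relatively $(\hat Z_\cae,4)$ K-stable with respect to admissible test configurations in the sense of \cite{AMT}. Matching conventions, $\hat Z_\cae$ is exactly the holomorphic vector field on the total space of $L$ induced by lifting the generator of the $\Sph^1$-action, and the weight $4=2\cdot 2$ is dictated by $m=2$ in $\wt=m+2$. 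Chaining this equivalence with the CR reduction of the first paragraph yields the theorem.

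The main obstacle — and the step requiring genuine care rather than citation — is the precise dictionary between the three notions of stability and the three families of geometric data. One must check that (i) the admissible test configurations of \cite{AMT} for $(M,L)$ correspond, under the moment-map description of the ruled surface, to the compactified/simple test configurations that the ansatz computations in Section~\ref{s:global} actually control; (ii) the relative weighted Futaki invariant / weighted Donaldson--Futaki functional whose positivity is being asserted matches the obstruction that appears when one tries to solve the $(f,4)$-extremal ODE boundary value problem; and (iii) the holomorphic vector field bookkeeping is consistent — that $\hat Z_\cae$ on $L$ restricts to the right field, that the relative version is taken with respect to the same maximal torus, and that passing to the Sasaki cone of $\Sm$ does not enlarge or shrink the relevant automorphism group. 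Granting that these identifications are the ones set up earlier in the paper for ruled surfaces, the proof is then the two-step reduction: Theorem~\ref{thm:main} converts extremal Sasaki data into $(f,4)$-extremal K\"ahler data, and the ruled-surface analysis converts the latter into the stated relative $(\hat Z_\cae,4)$ K-stability.
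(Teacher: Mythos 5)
Your two-step skeleton (Theorem~\ref{thm:main} to convert the Sasaki problem into an $(f,4)$-extremal K\"ahler problem, then the ruled-surface analysis of Section~\ref{s:global} and \cite{AMT,lahdili2} to convert that into positivity of $P_{\ah,\bh}$ on $(-1,1)$, i.e.\ the stated relative K-stability) matches the paper, and your existence direction is essentially the paper's: a positive $P_{\ah,\bh}$ gives $\Afn=P_{\ah,\bh}/(\mc+\ah)$ in \eqref{ruled-kahler}, hence an $(f_\bh,4)$-extremal metric whose induced CR structure on $\Sm$ is extremal with respect to $\cae$. However, there is a genuine gap in your non-existence direction, and it is not where you locate the difficulty (the test-configuration dictionary, which the paper packages into Definition~\ref{weighted-stability} and Proposition~\ref{c:ruled}). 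The theorem quantifies over \emph{all} $X_\sas$-invariant $\Ds$-compatible CR structures $J'$ on $\Sm$. Such a $J'$ descends to an $\omega$-compatible, $K$-invariant complex structure $J'$ on the quotient $M$, but $(M,J')$ need \emph{not} be biholomorphic to the original $(M,J)$: the Sasaki--Reeb quotient only fixes the symplectic form $\omega$ and the circle action, not the complex structure. Your reduction silently replaces "there is no $\omega$-compatible $K$-invariant complex structure for which $(g',\omega)$ is $(f,4)$-extremal" by "there is no $(f,4)$-extremal K\"ahler metric in $2\pi c_1(L)$ on the fixed $(M,J)$", which is a strictly weaker statement.

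The paper closes this gap as follows: any compact K\"ahler surface admitting a holomorphic vector field with zeros is rational or ruled \cite{CHK}, so $(M,J')=P(\cO\oplus\cL')\to B'$ with $B'$ of the same genus; intersection properties of the zero set of $K$ (the zero and infinity sections) force $\deg\cL'=\deg\cL$; and since $J$ and $J'$ are compatible with the same $\omega$ and the same potential $f_\bh=\mm+\bh$, the class $[\omega]$ has the form \eqref{kahler-class} with the same parameter $\ah$ on both surfaces, so the obstruction polynomial $P_{\ah,\bh}$ is the \emph{same} for $(M,J)$ and $(M,J')$. Only then does Proposition~\ref{c:ruled} (whose proof itself needs a nontrivial perturbation argument in $\bh$ to handle double roots of $P_{\ah,\bh}$, via the openness result of \cite{lahdili1} and the Mabuchi-functional bound of \cite{lahdili2}) apply to every candidate quotient and yield the contradiction. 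Without this step your argument only rules out extremal CR structures inducing the original complex structure on $M$, which does not prove the theorem.
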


This suggests a link between the weighted K-stability of \cite{AMT,lahdili2}
for a smooth polarized variety, and K-stability of the K\"ahler cone of a
Sasaki polarization, studied in \cite{BV,CSz}.

\section{Proofs of Lemmas~\ref{l:contact}--\ref{l:main}}
\label{s:background}

Let $(\Sm,\Ds)$ be a contact $(2m+1)$-manifold, i.e., $\Ds\leq T\Sm$ is a
rank $2n$ distribution on $\Sm$, with quotient map $\cf_\Ds\colon T\Sm\to
T\Sm/\Ds$, whose Levi form $\Lv_\Ds(X,Y)=-\cf_\Ds(X,Y)$ ($X,Y\in
C^\infty_\Sm(\Ds)$) is nondegenerate at each point of $\Sm$. A \emph{CR
  structure} on $(\Sm,\Ds)$ is a complex structure $J$ on $\Ds$ such that the
subbundle $\Ds^{(1,0)}$ of $(1,0)$-vectors in $\Ds\otimes\C$ is closed under
Lie bracket. This implies in particular that $J$ is an \emph{almost CR
  structure}, i.e., a complex structure on $\Ds$ such that $\Lv_\Ds$ has
type $(1,1)$ with respect to $J$. We say that $(\Ds, J)$ is \emph{strictly
  pseudo-convex} if $\Lv_\Ds$ is definite (with respect to $J$) at each point
of $\Sm$, i.e., $\Lv_\Ds(\cdot,J\cdot)$ is a definite bundle metric on $\Ds$.
It then follows that $(\Sm,\Ds)$ is \emph{co-oriented}, i.e., $T\Sm/\Ds$ is
an oriented real line bundle.

\begin{proof}[Proof of Lemma~\textup{\ref{l:contact}}] We show that any
section $\cae$ of $T\Sm/\Ds$ has a unique lift to a contact vector field
$X_\cae$ (with $\cf_\Ds(X_\cae)=\cae$). On the open subset where $\cae$ is
nonzero, the contact condition implies that $X_\cae$ is the Reeb vector field
of the contact form $\cf_\cae:=\cae^{-1}\cf_\Ds$, which is characterized by
$\cf_\cae(X_\cae)=1$ and $\d\cf_\cae(X_\cae,\cdot)=0$. Now suppose
$\cae=f\sas$ with $\sas$ nonvanishing and $f$ a smooth function. Then
$\d\cf_\sas=\d f\wedge \cf_\cae + \d\cf_\cae$ and the characterization of Reeb
vector fields gives
\begin{equation}\label{eq:Reeb-change}
  X_\cae = f X_\sas - (\d\cf_\sas\restr\Ds)^{-1}(\d f\restr\Ds)
\end{equation}
where $f$ is nonzero, but this formula extends $X_\cae$ smoothly over the
zeroset of $f$. This also shows $\cae\mapsto X_\cae$ is a first order
differential operator. Since a contact vector field in $\Ds$ is necessarily
zero by the nondegeneracy of the Levi form, the lift is unique.
\end{proof}

\begin{proof}[Proof of Lemma~\textup{\ref{l:potential}}]
Clearly $\cL_{X_\sas}(f\sas)=\d f(X_\sas)\sas$, and if $f,h\in
C^\infty_M(\R)^\sas$ then~\eqref{eq:Reeb-change} implies
\[
[f\sas,h\sas]=\d h(X_{f\sas})
=-(\omega_\sas\restr\Ds)^{-1}(\d f\restr\Ds,\d h\restr\Ds)
\]
so the first part is immediate. We may thus suppose that $f$ is the pullback
of a smooth function, also denoted $f$, on a Sasaki--Reeb quotient
$(M,g,J,\omega)$, with symplectic gradient $K$. Then the same
formula~\eqref{eq:Reeb-change} shows that $X_\cae$ is a lift of $K$ to
$\Sm$. Furthermore, since $X_\cae$ is contact and $X_\sas$-invariant,
$\cL_{X_\cae} J$ is horizontal and $X_\sas$-invariant, hence vanishes iff its
pushforward to $M$ vanishes, which holds iff $\cL_K J =0$ on $M$, i.e., $f$ is
a Killing potential.
\end{proof}

\begin{proof}[Proof of Lemma~\textup{\ref{l:main}}] We let $\cae=f\sas$
  with $\sas\in \crJ_+^\cae$ and expand the definition of the $(\cae, m+2)$
  scalar curvature of $g_\sas$ on $\Sm$ to obtain
\begin{align}\label{eq:invariant1}
\Scal_{\cae,m+2}(g_\sas)\,\sas &= \bigl(f^2 \Scal(g_\sas) - 2(m+1) f\Delta_{g_\sas} f
-(m+1)(m+2)\bigl|\d f\restr\Ds\bigr|{}^2_{g_\sas}\bigr)\,\sas\\
\label{eq:invariant2}
&=\Bigl(f \Scal(g_\sas) - 2(m+1) \Delta_{g_\sas} f
-\frac{(m+1)(m+2)}{f}\bigl|\d f\restr\Ds\bigr|{}^2_{g_\sas} \Bigr)\,\cae.
\end{align}
where~\eqref{eq:invariant2} holds on the open subset $U$ where $\cae$ is
nonzero. Now, as noted already, $\Scal(g_\cae)$ and $\Scal(g_\sas)$ are the
Tanaka--Webster scalar curvatures of the Tanaka--Webster connections induced
by $\cae$ and $\sas$~\cite[\S4]{david:weyltanaka}, and a straightforward but
tedious computation of the change of the Tanaka--Webster scalar curvature
under a change of connection, which can be found e.g.~in~\cite[(2.9)]{JL},
shows that on $U$,~\eqref{eq:invariant2} computes $\Scal(g_\cae)\cae$,
independently of $f$. However, on any open subset where $\cae=0$, $f=0$ and
hence the right hand side of~\eqref{eq:invariant1} is zero. Thus
$\Scal_{\cae,m+2}(g_\sas)\,\sas$ is independent of $\sas$ on a dense open
subset, hence everywhere. The equality~\eqref{eq:invariant1} now shows that
this is a second order quadratic differential operator in $\cae$.
\end{proof}

\section{Formal GIT picture for weighted extremal Sasaki metrics}\label{s:GIT}

Let $(\Sm,\Ds)$ be a compact co-oriented contact $(2m+1)$-manifold (or
orbifold), and fix a torus $\T$ in its group $\Con(\Sm,\Ds)$ of contact
transformations.  As explained in the introduction, we tacitly identify the
Lie algebra of $\Con(\Sm, \Ds)$ with the space $\con(\Sm, \Ds)$ of smooth
sections of $T\Sm/\Ds$ and denote by $\con_+(\Sm, \Ds)$ the open cone of
positive sections in of $T\Sm/\Ds$ with respect to its orientation. Let
$\Con(\Sm,\Ds)^\T$ denote the group of $\T$-equivariant contact
transformations, with Lie algebra identified with the space $\con(\Sm,\Ds)^\T$
of $\T$-invariant sections of $T\Sm/\Ds$. Thus, the Lie algebra $\tor$ of $\T$
as a linear subspace of $\con(\Sm,\Ds)^\T$.

Now observe that $\vol_\Ds :=\cf_\Ds\wedge\Lv_\Ds^{\wedge m}$ is a
well-defined section of $\Wedge^{2m+1}T^*\Sm\otimes (T\Sm/\Ds)^{m+1}$: indeed
for any nonvanishing section $\sas$ of $T\Sm/\Ds$, $\sas^{-m-1}\vol_\Ds
=\cf_\sas\wedge \d\cf_\sas^{\wedge m}$. Fix two such sections
$\sas,\cae\in\tor \cap \con_+(\Sm, \Ds)$ and $\wt\in\R$. Then
$\con(\Sm,\Ds)^\T$ has a bi-invariant inner product
\begin{equation*}
\ip{ \cae_1, \cae_2 }_{\cae,\sas,\wt} :=
\int_\Sm (\cae_1/\sas) (\cae_2/\sas) (\cae/\sas)^{-\wt-1}
\,\cf_\sas\wedge \d\cf_\sas^{\wedge m}
=\int_\Sm \cae_1\,\cae_2\,\cae^{-\wt-1}\sas^{\wt-2-m}\,\vol_\Ds.
\end{equation*}

Let $\mathcal{AC}_{+}(\Sm,\Ds)^\T$ be the space of $\T$-invariant almost
CR  structures on $(\Sm,\Ds)$, such that $\Lv_{\Ds}$ is of type $(1,1)$ and positive definite
with respect to $J$ and the given orientation on $T\Sm/\Ds$. We denote by
${\mathcal C}_+(\Sm, \Ds)^{\T} \sub \mathcal{AC}_{+}(\Sm,\Ds)^\T$ the subset
of $\T$-invariant compatible CR structures on $(\Sm, \Ds)$.  Notice that
$\Con(\Sm,\Ds)^\T$ acts naturally on $\mathcal{AC}_+(\Sm,\Ds)^\T$ (preserving
${\mathcal C}_+(\Sm, \Ds)^{\T}$) and the tangent space of
$\mathcal{AC}_+(\Sm,\Ds)^\T$ at $J$ is identified with the Fr\'echet space of
smooth sections $\dot J$ of $\End(\Ds)$ satisfying
\begin{equation*}
\dot{J} J + J \dot{J} =0, \qquad \Lv_\Ds(\dot{J} \cdot , \cdot) +
\Lv_\Ds(\cdot, \dot{J} \cdot) =0,
\end{equation*}
so $\mathcal{AC}_+(\Sm,\Ds)^\T$ has a formal Fr\'echet K\"ahler structure
$(\J, \Om^{\cae,\sas,\wt})$ defined by $\J_{J} (\dot J) := J \dot J$ and
\begin{equation*}
\Om^{\cae,\sas,\wt}_{J}(\dot{J}_1, \dot{J}_2)
:= \frac{1}{2} \int_\Sm \trace \bigl(J\dot{J}_1 \dot{J}_2\bigr)\,
(\cae/\sas)^{-\wt+1}\, \cf_\sas\wedge \d\cf_\sas^{\wedge m}
= \frac{1}{2} \int_\Sm \trace \bigl(J\dot{J}_1 \dot{J}_2\bigr)\,
\cae^{-\wt+1} \sas^{\wt-2-m}\,\vol_\Ds.
\end{equation*}
To see this, we can take $\sas \in \tor \cap \con_+(\Sm, \Ds)$ to be
quasi-regular with a global quotient $(M,\omega)$. Then our set-up reduces to the
formal GIT picture for $(f,\wt)$-extremal $\omega$-compatible, $\T/\Sph^1_{\sas}$-invariant almost-K\"ahler metrics on the symplectic orbifold
$(M, \omega)$, discussed in~\cite{ACGL,AM,lahdili1}. The momentum map for the
action of $\Ham(M,\omega)^\T$ at a compatible K\"ahler structure is identified
with the $(f,\wt)$ scalar curvature, showing that for a CR structure $J\in
\mathcal{C}_{+}(\Sm,\Ds)^\T$, the corresponding momentum map for the action of
$\Con(\Sm,\Ds)^\T$ on $\mathcal{AC}_{+}(\Sm,\Ds)^\T$ is $\Scal(g_\cae)\cae$ (where we multiply by $\cae$ to
obtain an element of $\con(\Sm,\Ds)^\T$).

We now notice that for $\wt=m+2$, the bi-invariant inner product on $\con(\Sm,
\Ds)^{\T}$ and the formal K\"ahler structure on $\mathcal{AC}_{+}(\Sm,\Ds)^\T$
are independent of $\sas$. In this case, our setting reduces to the formal GIT
picture for extremal Sasaki metrics on $(\Sm,\cf_\cae)$ discussed
in~\cite{He}, where the momentum map for the action of $\Con(\Sm,\Ds)^\T$ is 
the Tanaka--Webster scalar curvature $\Scal(g_\cae)\cae$  (the
multiplication by $\cae$ is implicit in~\cite{He} through the identification
of the Lie algebra with smooth functions).

Hence this provides another explanation as to why the weight $m+2$ is special
and the transversal $(\cae,m+2)$ scalar curvature
$\Scal_{\cae,m+2}(g_\sas)\sas$ of $g_\sas$ is independent of $\sas$ and equal
to the Tanaka--Webster scalar curvature of $g_\cae$, viewed as an element of
$\con(\Sm,\Ds)^\T$.

\section{Basic Examples}\label{s:examples}

\subsection{Bochner-flat $(f, m+2)$-extremal metrics} Let us now consider the 
\emph{standard CR sphere} $\Sph^{2m+1} \sub \C^{m+1}$, $m \geq 2$, with $\Ds =
T\Sph^{2m+1} \cap J (T\Sph^{2m+1})$, $J$ induced by the standard complex
structure on $\C^{m+1}$, and $\crJ(\Sph^{2m+1}, \Ds, J)\cong \su(1,
m+1)$. By a result of Webster~\cite{Webster}, for any $\sas \in
\crJ_+(\Sph^{2m+1}, \Ds, J)$, the transversal K\"ahler structure
$(g_\sas,J,\omega_\sas)$ is Bochner-flat, and thus extremal
(see~\cite{Bryant}), and any Bochner-flat K\"ahler manifold $(M,g, J, \omega)$
is (locally) obtained as a $\sas$-reduction of $(\Sph^{2m+1}, \Ds, J)$ for
some such $\sas$.  It then follows from Theorem~\ref{thm:main} that for any
$\cae\in \crJ_+^\sas$, $(\Ds, J, \sas)$ is a $(\cae, m+2)$-extremal, and hence
by Lemma~\ref{l:potential} (see Example~\ref{e:regular-(xi,wt)-extremal}), we
have the following observation.

\begin{prop}\label{BF-local} Let $(M, g, J, \omega)$ be a Bochner-flat K\"ahler
$2m$-manifold and $f>0$ a Killing potential. Then $(g, \omega)$ is $(f,
  m+2)$-extremal.
\end{prop}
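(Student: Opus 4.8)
The plan is to derive Proposition~\ref{BF-local} as a direct corollary of the CR picture already set up, using the theorem of Webster in the form quoted just above it, together with Theorem~\ref{thm:main} and Example~\ref{e:regular-(xi,wt)-extremal}. First I would pass from the K\"ahler data to the CR data: since $(M,g,J,\omega)$ is Bochner-flat, Webster's theorem says that, at least locally, $(M,g,J,\omega)$ is the $\sas$-Sasaki--Reeb quotient of the standard CR sphere $(\Sph^{2m+1},\Ds,J)$ for some $\sas\in\crJ_+(\Sph^{2m+1},\Ds,J)$. (Globally one restricts attention to a neighbourhood on which $[\omega/2\pi]$ is integral, so that Example~\ref{e:regular} produces an honest regular Sasaki manifold; since $(f,m+2)$-extremality is a local PDE condition on $(g,\omega)$, it suffices to verify it on such neighbourhoods.)

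Next I would lift the Killing potential. Given $f>0$ a Killing potential for $(g,\omega)$, Lemma~\ref{l:potential} identifies $f$ (pulled back to $\Sm$) with an element $\cae:=f\sas\in\crJ^\sas$; since $f>0$, in fact $\cae\in\crJ_+^\sas$, so $[\sas,\cae]=0$ and both lie in $\crJ_+(\Sph^{2m+1},\Ds,J)$. By the cited result of Webster (and~\cite{Bryant}), the transversal K\"ahler structure $(g_\cae,J,\omega_\cae)$ associated to \emph{any} element $\cae\in\crJ_+(\Sph^{2m+1},\Ds,J)$ is again Bochner-flat, hence extremal; in particular $(\Sm,\Ds,J,\cae)$ is an extremal Sasaki structure in the sense of Definition~\ref{d:Sasaki-(xi,wt)-extremal}. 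Now Theorem~\ref{thm:main} applies verbatim with this pair $\sas,\cae$: since $(\Sm,\Ds,J,\cae)$ is extremal, $(\Sm,\Ds,J,\sas)$ is $(\cae,m+2)$-extremal.

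Finally I would push this statement back down to $M$. By Example~\ref{e:regular-(xi,wt)-extremal} (which is itself an immediate consequence of Lemma~\ref{l:potential} and Definition~\ref{d:Sasaki-(xi,wt)-extremal}), the Sasaki--Reeb quotient of a $(\cae,m+2)$-extremal Sasaki manifold by $\sas$ is an $(f,m+2)$-extremal K\"ahler orbifold, where $\cae=f\sas$. But that quotient is precisely $(M,g,J,\omega)$, so $(g,\omega)$ is $(f,m+2)$-extremal, as claimed.

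Honestly there is no serious obstacle here: the proposition is a bookkeeping exercise, assembling Webster's classification of Bochner-flat K\"ahler metrics with the already-proven Lemma~\ref{l:main}/Theorem~\ref{thm:main}. The one point deserving a sentence of care is the local-versus-global issue --- $(M,g,J,\omega)$ need not be the quotient of a \emph{globally defined} CR sphere, only locally --- but since $(f,m+2)$-extremality of $(g,\omega)$ is detected pointwise via the PDE~\eqref{(f,wt)-scalar-curvature}, working on integrality neighbourhoods and invoking Example~\ref{e:regular} there is enough. A second, even more minor point is to note that Webster's result indeed furnishes, for the \emph{given} $f$, a \emph{commuting} pair $\sas,\cae$ inside the Sasaki cone of the \emph{same} CR sphere (rather than two unrelated spheres), which is exactly what the hypothesis $[\sas,\cae]=0$ of Theorem~\ref{thm:main} requires; this is automatic because $\cae=f\sas$ with $f$ a transversal Killing potential, so $[\sas,\cae]=(\d f(X_\sas))\sas=0$.
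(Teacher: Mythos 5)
Your proposal is correct and follows essentially the same route as the paper: Webster's theorem realizes the Bochner-flat metric as a Sasaki--Reeb quotient of the standard CR sphere, every element of the Sasaki cone of the sphere has Bochner-flat (hence extremal) transversal K\"ahler structure, and Theorem~\ref{thm:main} together with Lemma~\ref{l:potential}/Example~\ref{e:regular-(xi,wt)-extremal} transfers the conclusion back to $(M,g,J,\omega)$. Your added remarks on the local-versus-global issue and on the commutation $[\sas,\cae]=0$ merely make explicit what the paper leaves implicit.
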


To obtain global examples, we let $\sas_\wv \in \crJ_+(\Sph^{2m+1}, \Ds, J)$
correspond to the weighted Hopf fibration $\Sph^{2m+1} \to \C P_\wv^m$,
realizing $(\Sph^{2m+1}, \Ds, J, \sas_\wv)$ as a quasi-regular Sasaki manifold
over the Bochner-flat weighted projective space $(\C P_\wv^m, J, g, \omega)$
(see~\cite{Bryant,DG}). Thus we have the following higher dimensional
extension of \cite[Prop. 5]{AM}.

\begin{cor}\label{c:FS} The Bochner-flat metric on $\C P_\wv^m$ is
$(f, m+2)$-extremal for any positive Killing potential $f$.
\end{cor}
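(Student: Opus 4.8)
The plan is to deduce Corollary~\ref{c:FS} directly from Proposition~\ref{BF-local} together with the standard quasi-regular-to-orbifold dictionary already recalled in the excerpt. First I would recall that the weighted Hopf fibration $\Sph^{2m+1}\to\C P_\wv^m$ realizes $\sas_\wv\in\crJ_+(\Sph^{2m+1},\Ds,J)$ as a quasi-regular Sasaki structure whose Sasaki--Reeb quotient is precisely the Bochner-flat weighted projective space $(\C P_\wv^m,J,g,\omega)$ (this is the content of~\cite{Bryant,DG}, already invoked above). Since $\C P_\wv^m$ is a compact K\"ahler orbifold and, by Webster's theorem together with~\cite{Bryant}, its Bochner-flat metric is extremal, Proposition~\ref{BF-local} (applied in the orbifold setting, which is exactly how it was obtained from Theorem~\ref{thm:main} and Example~\ref{e:regular-(xi,wt)-extremal}) shows that for any positive Killing potential $f$ on $\C P_\wv^m$, the metric $(g,\omega)$ is $(f,m+2)$-extremal. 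That is all the statement asserts, so the proof is essentially a one-line invocation.

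To make the argument self-contained I would spell out slightly more carefully why Proposition~\ref{BF-local} applies to the orbifold $\C P_\wv^m$ rather than only to smooth manifolds. The point is that Proposition~\ref{BF-local} is a local statement about Bochner-flat K\"ahler metrics and positive Killing potentials: any point of $\C P_\wv^m$ has a neighbourhood realized as a Sasaki--Reeb quotient of $(\Sph^{2m+1},\Ds,J)$ by a suitable element of the Sasaki cone, and Theorem~\ref{thm:main} is a pointwise (CR-local) equivalence, so no compactness or smoothness of the quotient is needed. Thus the $(f,m+2)$-extremality condition, being the statement that $\Scal_{f,\wt}(g)$ is a Killing potential, is checked locally and holds on all of $\C P_\wv^m$. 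I would phrase this as: apply Proposition~\ref{BF-local} with $(M,g,J,\omega)=(\C P_\wv^m,J,g,\omega)$, noting that the proposition's proof (via Webster's theorem, Theorem~\ref{thm:main}, and Lemma~\ref{l:potential}) goes through verbatim in the orbifold category since all the ingredients are local along the Sasaki--Reeb foliation.

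There is essentially no obstacle here: the only thing to be careful about is the orbifold versus manifold distinction, and the fact that the "Bochner-flat metric on $\C P_\wv^m$" referred to in the statement is the specific one descending from $\sas_\wv$, which is the canonical Bochner-flat representative in its K\"ahler class. One might optionally remark that positive Killing potentials $f$ on $\C P_\wv^m$ correspond, via Lemma~\ref{l:potential}, to elements $\cae=f\sas_\wv\in\crJ_+^{\sas_\wv}(\Sph^{2m+1},\Ds,J)$, i.e.\ to positive combinations of $\sas_\wv$ with the generators of the residual torus action, so the family of $(f,m+2)$-extremal metrics produced is genuinely nontrivial and parametrized by an open cone. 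But for the corollary itself, the bare citation of Proposition~\ref{BF-local} suffices, and I would keep the proof to two or three sentences.
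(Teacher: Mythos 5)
Your proposal is correct and follows essentially the same route as the paper: the corollary is obtained by realizing the Bochner-flat metric on $\C P_\wv^m$ as the Sasaki--Reeb quotient of $(\Sph^{2m+1},\Ds,J)$ by the quasi-regular structure $\sas_\wv$ and then invoking Proposition~\ref{BF-local}. Your additional remark that the $(f,m+2)$-extremality condition is local, so the orbifold setting causes no difficulty, is exactly the point the paper leaves implicit.
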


\subsection{Flat $(f,\wt)$-extremal metrics} The conclusion of
Proposition~\ref{BF-local} can be strengthened for flat K\"ahler metrics.

\begin{prop}\label{flat-local}
Let $(V, g_V, \omega_V)$ be a flat K\"ahler manifold and $f>0$ a Killing
potential on $V$. Then, for any scalar-flat K\"ahler manifold $(\Bm, g_\Bm,
\omega_\Bm)$ the K\"ahler product $(M, g, \omega)$ of $(V, g_V, \omega_V)$ and
$(\Bm, g_\Bm, \omega_\Bm)$ is $(f,\wt)$-extremal for any $\wt$.
\end{prop}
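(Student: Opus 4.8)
The plan is to verify the defining property of $(f,\wt)$-extremality directly by computing $\Scal_{f,\wt}(g)$ for the product metric $g = g_V \oplus g_\Bm$, using the fact that $f$, being a Killing potential on the flat factor $V$, pulls back to a Killing potential on $M$. First I would observe that since $(V,g_V)$ is flat, $\Scal(g_V) = 0$, and since $(\Bm,g_\Bm)$ is scalar-flat, $\Scal(g_\Bm) = 0$; hence $\Scal(g) = 0$ on the product. Next, because $f$ depends only on the $V$ variables, the Laplacian and gradient-norm terms in \eqref{(f,wt)-scalar-curvature} reduce to their counterparts computed purely on $(V,g_V)$: $\Delta_g f = \Delta_{g_V} f$ and $|\d f|^2_g = |\d f|^2_{g_V}$. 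So $\Scal_{f,\wt}(g) = -2(\wt-1) f\Delta_{g_V} f - \wt(\wt-1)|\d f|^2_{g_V}$, a function depending only on $V$.

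**Reduction to the flat factor.** The key point is then that on a flat K\"ahler manifold, a Killing potential $f$ has a very restricted form: $\d(\grad_{g_V} f)$ is a constant skew-symmetric (with respect to $g_V$) endomorphism commuting with $J$, so in suitable affine coordinates $f$ is a quadratic polynomial $f = \tfrac12 \langle Ax, x\rangle + \langle b, x\rangle + c$ with $A$ a constant Hermitian-symmetric matrix. For such $f$ one computes directly that $\Delta_{g_V} f = \trace A$ is constant and that $|\d f|^2_{g_V} = |Ax + b|^2$ is itself a Killing potential on $(V, g_V)$ — indeed it is again a quadratic polynomial of the same type, since $\grad_{g_V}(|Ax+b|^2)$ has constant $J$-commuting differential $2A^2$ (up to sign conventions). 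Therefore $\Scal_{f,\wt}(g) = -2(\wt-1)(\trace A)\, f - \wt(\wt-1)\, |\d f|^2_{g_V}$ is a linear combination, with constant coefficients, of the Killing potential $f$, the Killing potential $|\d f|^2_{g_V}$, and constants; hence it is a Killing potential for $g_V$, and so also for $g$ (its Hamiltonian vector field tangent to the $V$-factor is Killing for $g_V$ and the product structure makes it Killing for $g$). This establishes $(f,\wt)$-extremality for every $\wt$.

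**Main obstacle.** The only nontrivial step is the claim that $|\d f|^2_{g_V}$ is itself a Killing potential on the flat manifold $(V, g_V)$; this is where flatness is genuinely used, beyond merely killing the $\Scal(g)$ term. I would justify it either by the explicit quadratic-coordinate computation sketched above, or more invariantly by noting that on $(V,g_V)$ flat, if $K = \grad_{g_V} f$ is a (real-holomorphic) Killing field then $\nabla K = \nabla^2 f$ is a parallel section of $\End(TV)$, whence $\d |\d f|^2_{g_V} = 2 (\nabla^2 f)(\grad f, \cdot)^\flat$ and $\grad_{g_V}|\d f|^2_{g_V} = 2(\nabla^2 f)(\grad f)$ has parallel covariant derivative $2(\nabla^2 f)^2$, making it a Killing field as well. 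One should also remark that if $(V, g_V)$ is noncompact the statement is local in nature (or one takes $V$ a flat torus/quotient on which $f$ descends, as in the Sasaki context where only local quotients are needed), and that the scalar-flat factor $(\Bm, g_\Bm)$ is completely passive — it contributes nothing to $\Scal_{f,\wt}(g)$ because $f$ is constant along it and $\Scal(g_\Bm) = 0$. No appeal to Proposition~\ref{BF-local}, Lemma~\ref{l:main}, or Theorem~\ref{thm:main} is needed; this is a self-contained computation on the K\"ahler side, which is why the conclusion can be strengthened to hold for \emph{all} weights $\wt$ rather than just $\wt = m+2$.
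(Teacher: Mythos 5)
Your proof is correct, and the first reduction (scalar-flatness of $\Bm$ kills the $\Scal(g)$ term, $f$ lives on the $V$-factor, so everything collapses to a computation on the flat factor) is exactly the paper's. Where you genuinely diverge is in the key step, showing that $|\d f|^2_{g_V}$ is a Killing potential. The paper gets this indirectly: it first uses the Bochner identity on the Ricci-flat $V$ to see that $\Delta_{g_V}f$ is constant (so $f\Delta_{g_V}f$ is a Killing potential), and then invokes Proposition~\ref{BF-local} --- a flat metric is Bochner-flat, so by the CR-sphere/Webster argument the $(f,m+2)$ scalar curvature is already known to be a Killing potential, and subtracting the term $f\Delta_{g_V}f$ leaves $|\d f|^2_{g_V}$. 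You instead prove the same fact by a self-contained elementary computation: a Killing potential on flat $\C^m$ is a quadratic polynomial with constant symmetric $J$-commuting Hessian $A$, whence $\Delta_{g_V}f$ is constant and $|\d f|^2_{g_V}=|Ax+b|^2$ again has constant symmetric $J$-commuting Hessian $2A^2$, so it is a Killing potential; linearity of the space of Killing potentials finishes the argument for every $\wt$. Your route is more elementary and makes no appeal to Theorem~\ref{thm:main} or the Bochner-flat machinery, which is a genuine gain in self-containedness; the paper's route is shorter on the page because Proposition~\ref{BF-local} has already been established and it reinforces the paper's theme that these weighted curvature identities come from CR geometry. One small imprecision to fix in your invariant argument: the Killing field associated to $f$ is $J\grad_{g_V}f$, not $\grad_{g_V}f$ (a gradient Killing field would be parallel); what you actually need, and what is true, is that on a flat manifold every Killing field has parallel covariant derivative, so $J\nabla^2 f$, and hence $\nabla^2 f$, is parallel --- the rest of your computation then goes through unchanged.
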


\begin{proof} As $\Bm$ is scalar-flat,~\eqref{(f,wt)-scalar-curvature} implies
that the $(f,\wt)$ scalar curvature of $V\times \Bm$ equals the $(f,\wt)$
scalar curvature of $M$. Thus, we need to establish the claim on $M:=V$. As
$g$ is a flat metric, \eqref{(f,wt)-scalar-curvature} reduces to
\begin{equation}\label{flat}
-2(\wt-1)f\Delta_g f -\wt(\wt-1)|\d f|^2_g
\end{equation}
so it suffices to show that each of the two terms in \eqref{flat} is a Killing
potential for $g$. For the first term, using that $g$ is Ricci-flat and $f$ is
a Killing potential, the Bochner identity shows that $\Delta_g f$ is a
constant, and thus $f\Delta_g f$ is a Killing potential of $g$. For the second
term, using that $g$ is Bochner-flat and Proposition~\ref{BF-local}, it
follows that \eqref{flat} with $\wt=m+2$ gives rise to a Killing potential,
and hence $|\d f|^2_g$ is a Killing potential for $g$.
\end{proof}

\subsection{$(f, m+2)$-extremal products} As noted in the proof of
Proposition~\ref{flat-local}, the K\"ahler product of a scalar-flat, K\"ahler
$2(m-\ell)$-manifold $(\Bm, g_\Bm, \omega_\Bm)$ with a $(f,\wt)$-extremal
K\"ahler $2\ell$-manifold $(V, g_V, \omega_V)$ gives rise to a
$(f,\wt)$-extremal $2m$-manifold $(M, g, \omega)$.  In \cite{AMT, lahdili1},
for any given $\wt$, large families of $(f,\wt)$-extremal Hodge (i.e.,
compact, integral) K\"ahler manifolds $(V, g_V, \omega_V)$ of dimension
$2\ell$ are constructed. Taking such a $(V, g_V, \omega_V, f)$ with $\wt=m+2$
($m\geq \ell$) and considering the K\"ahler product of $(V, g_V, \omega_V)$
with a scalar-flat Hodge K\"ahler $2(m-\ell)$-manifold $(\Bm, g_\Bm,
\omega_\Bm)$, we obtain a compact $(f, m+2)$-extremal K\"ahler $2m$-manifold
$(M, g, \omega, J)$, which gives rise to a compact extremal Sasaki
$(2m+1)$-manifold $(\Sm, \Ds, J, \cae)$ via Example~\ref{e:regular}.  Notice
that the extremal Sasaki manifold thus obtained is not in general
quasi-regular, but when it is (which places a rationality condition on the
positive Killing potential $f$ of $g_V$), the resulting extremal K\"ahler
orbifold is not in general a product, even though $(M, g, \omega)$ is. We
detail and generalize this observation below in the setting of toric bundles.

\section{Toric geometry and toric bundles}\label{s:toric}

\subsection{Toric contact manifolds}
\label{s:projective-toric}

Applications of Theorem~\ref{thm:main} depend in particular on the existence
of independent commuting elements $\cae,\sas\in\crJ(\Sm,\Ds,J)\leq
\con(\Sm,\Ds)$. The maximal dimension of an abelian subalgebra of
$\con(\Sm,\Ds)$ is $m+1$ (assuming $\Sm$ is connected of dimension $2m+1$).
Let us therefore consider the case that we have such an $(m+1)$-dimensional
abelian subalgebra $\torh\into\con(\Sm,\Ds);\ah\mapsto\cae_\ah$, in which case
$(\Sm,\Ds,\torh)$ is said to be \emph{toric}. We usually assume that the
corresponding contact vector fields generate an effective contact action of a
real $(m+1)$-torus $\T^{m+1}$, whose Lie algebra is thus canonically
isomorphic to $\torh$. Hence we have an integral lattice $\Lam\sub \torh$ with
$\T^{m+1}\cong\torh/2\pi\Lam$, and, on the dense open set $\Sm\op$ where the
$\T^{m+1}$-action is free, angle coordinates $t\colon\Sm\op\to\torh/2\pi\Lam$.

We also assume that the tautological bundle homomorphism 
\begin{align*}
\Sm\times \torh&\to T\Sm/\Ds\\
 (\pt,\ah)&\mapsto \cae_\ah(\pt)
\end{align*}
is surjective (as it is in the Sasaki case), so that its transpose
$(T\Sm/\Ds)^*\to \Sm\times\torh^*$ is injective.  We thus obtain a
\emph{momentum map} $\bar\mm\colon \Sm\to\Proj(\torh^*)$, where $\bar\mm(\pt)$
is the image of $(T\Sm/\Ds)^*_\pt$ in $\torh^*$, for any $\pt\in \Sm$. Hence
$\bar\mm^*\cO_{\torh^*}(-1)\cong (T\Sm/\Ds)^*$, where $\cO_{\torh^*}(-1)$ is
the tautological line bundle over the projective space $\Proj(\torh^*)$, with
fibre $\cO_{\torh^*}(-1)_\tau=\tau\leq \torh^*$.

\begin{rem} Alternatively observe (cf.~\cite{Lerman}) that the
annihilator $\Ds^0\leq T^*\Sm$ of $\Ds$ inherits from $T^*\Sm$ a closed
$2$-form which is nondegenerate on the complement of the zero section, and any
contact vector field on $\Sm$ lifts to a hamiltonian vector field on
$\Ds^0$. The restriction to $\torh$ of the momentum map of this action is
$\tilde\mm\colon \Ds^0\to \torh^*$ with
$\ip{\tilde\mm(\alpha),\ah}=\alpha(\cae_\ah)$ for $\alpha\in \Ds^0$ and
$\ah\in \torh$, where angle brackets denote contraction of $\torh^*$ with
$\torh$. Using the natural duality $\Ds^0\cong (T\Sm/\Ds)^*$,
$(\pt,\tilde\mm(\alpha))$ is the element of $\bar\mm^*\cO_{\torh^*}(-1)\leq
\Sm\times \torh^*$ corresponding to $\alpha\in\Ds^0_\pt$.
\end{rem}

Herein, we generally work instead with the \emph{momentum section}
$\hat\mm\colon\Sm\to \torh^*\otimes(T\Sm/\Ds)$ defined by
$\ip{\ah,\hat\mm(\pt)}=\cae_\ah(\pt)\in (T\Sm/\Ds)_\pt$ for $\ah\in\torh$ and
$\pt\in\Sm$. If $\mc\colon\Proj(\torh^*)\to\torh^*\otimes\cO_{\torh^*}(1)$
denotes the tautological section, with $\cO_{\torh^*}(1):=\cO_{\torh^*}(-1)^*$
and $\ip{\ah,\mc(\tau)}=\ip{\ah,\cdot}\restr\tau$ for $\ah\in\torh$ and
$\tau\in \Proj(\torh^*)$, then under the isomorphism
$\bar\mm^*\cO_{\torh^*}(1)\cong T\Sm/\Ds$, $\hat\mm=\bar\mm^* \mc$.

Any nonzero $\eps\in \torh$ defines an affine chart
\[
\As:= \{\pt\in\torh^*|\ip{\eps,\pt}=1\} \into\Proj(\torh^*)
\]
and if $\Dm\sub\Proj(\torh^*)$ is an open subset of the image of this chart,
then $\ip{\eps,\mc}$ restricts to a trivialization of
$\cO_{\torh^*}(1)\restr\Dm$.  In this trivialization, $\mc$ is the affine lift
of $\Dm$ to $\As\sub\torh^*$, i.e., we have $\mc\colon \Dm\to\torh^*$ with
$\ip{\eps,\mc}=1$. Hence $\ip{\eps,\d\mc}=0$, i.e.,
\[
\d\mc\colon TU\to U\times\eps^0,\qquad\text{where}\qquad
\eps^0=\{\pt\in\torh^*|\ip{\eps,\pt}=0\},
\]
is the trivialization of $TU$ in this affine chart. Thus $\torh$ is naturally
identified with the space $\Aff(\Dm)$ of affine functions on $\Dm$:
$\ah\in\torh$ defines the affine function $\ip{\ah,\mc}$ on $\Dm$, with $\eps$
corresponding to the constant function $1$. If $\tor=\torh/\spn{\eps}$, there
is a short exact sequence
\begin{equation}\label{eq:fund-seq}
0 \to \R  \xrightarrow{\eps} \torh \xrightarrow{\delta} \tor \to 0,
\end{equation}
where the duality $\tor^*\cong\eps^0$ identifies $\tor$ with $T^*_\pt \Dm$
for any $\pt\in\Dm$, while the quotient map $\delta$ sends an affine function
to its linear part (the constant value of its derivative).

Now $\bar\mm\colon\Sm\to\Proj(\torh^*)$ takes values in the affine chart
defined by $\eps$ iff $\sas:=\cae_\eps\in \con(\Sm,\Ds)$ is a nonvanishing
section of $T\Sm/\Ds$. In this trivialization the momentum section becomes a
function $\hat\mm\colon\Sm\to \torh^*$ with $\ip{\eps,\hat\mm}=1$.  The
transversal K\"ahler form $\d\cf_\sas\restr\Ds$ descends to a symplectic form
$\omega$ on any quotient of $M$ of $\Sm$ by $\sas$. If $\sas$ is
quasi-regular, we may take $(M,\omega)$ to be the global quotient, which is
then a toric symplectic $2m$-orbifold under the hamiltonian action of the real
$m$-torus $\T^m=\T^{m+1}/\Sph^1_\sas$ where $\Sph^1_\sas$ is the circle action
generated by $X_\sas$.

Concretely, we can choose a basis $e_0,\ldots e_m$ for $\torh$ such that
$\eps=e_0$, introduce coordinates $\mc_j=\ip{e_j,\mc}$ on $U$ with $\mc_0=1$,
and write $\ip{\ah,\mc}=\ah_0+\ah_1 \mc_1+\cdots \ah_n \mc_n$. We then
have coordinates $\hat\mm=(\hat\mm_0, \hat\mm_1, \ldots \hat\mm_m)$ and
$t=(t_0, t_1, \ldots t_m)$ on $\Sm\op$ with $\hat\mm_0=1$ and
\begin{equation}\label{eq:toric-contact}
\cf_\sas =\ip{\hat\mm,\d t} = \d t_0 + \sum_{j=1}^m \hat\mm_j \d t_j. 
\end{equation}
Hence $\d\cf_\sas= \ip{\d\hat\mm\wedge \d t}$, and since $\d\hat\mm$ takes
values in $\tor^*$, $\ip{\d\hat\mm\wedge \d t}$ depends only on $\delta(\d
t)$, which descends to a $\tor$-valued $1$-form on $M$. If we denote by $\mm$
the map $M\to\As$ induced by the momentum section on $\Sm$ in the affine chart
$\As$, we may therefore write $\omega = \ip{\d\mm\wedge \d t}$ on the image
$M\op$ of $\Sm\op$. The isomorphism $f\mapsto f\sas$ of
Lemma~\ref{l:potential} identifies $\torh$ with an abelian Lie subalgebra of
$C^\infty_M(\R)$ under the Poisson bracket induced by $\omega$. Evidently,
$\sas=\cae_\eps$ corresponds to $f\equiv 1$ on $M$. More generally, for any
$\ah\in\torh$ the function $f_\ah=f_\ah(\mm)\in C^\infty_M(\R)$ corresponding
to $\cae_\ah\in\con(\Sm,\Ds)$ satisfies $f_\ah(\mm(\pt))=\ip{\ah,\mm(\pt)}$
for all $\pt\in M$, i.e., $f_\ah(\mm)=\ip{\ah,\mm}$ is the pullback by $\mm$ of
the affine function $f_\ah(\mc)=\ip{\ah,\mc}$ on $\As$. Pulling back by $\mm$,
we may thus reinterpret~\eqref{eq:fund-seq} on $M$: in particular, we may view
$\delta$ as the restriction to $\torh\into C^\infty_M(\R)$ of the symplectic
gradient, and $\tor$ as its image in $\Ham(M,\omega)$.

\subsection{Toric CR manifolds and their Sasaki--Reeb quotients}

Thus far we have only considered the toric contact geometry of $\Sm$ and
induced toric symplectic geometry on $M$. According to~\cite{guillemin}, on
the dense open subset $M\op$, any toric almost K\"ahler structure may be
written in momentum--angle coordinates $(\mm, t)$ as:
\begin{equation}\label{toric-kahler}
\begin{aligned}
g &= \ip{\d \mm,  G(\mm), \d \mm} + \ip{\d t, H(\mm), \d t },\qquad
&  J\d t &= -\ip{G(\mm),\d \mm},\\
\omega &= \ip{ \d \mm\wedge \d t }, & J\d \mm &= \ip{H(\mm),\d t},
\end{aligned}
\end{equation}
where $H$ is a smooth positive definite $S^2\tor^*$-valued function on the
momentum image $\Delta\op:=\mm(M\op)$ and $G=H^{-1}$ is its pointwise inverse,
a smooth $S^2{\tor}$-valued function. This local expression makes sense in the
affine setting, where $\Delta\op$ lies in an affine space $\As\sub\torh^*$,
modelled on $\tor^*$: $G$ is a metric on $T\Delta\op\cong \Delta\op\times
\tor^*$, and $H$ the inverse metric on $T^*\Delta\op$.

A metric of the form \eqref{toric-kahler} is K\"ahler, i.e., $J$ is
integrable, if and only if $\ip{\d G\wedge \d \mc}=0$, which in affine
coordinates $\mc=(1,\mc_1,\ldots \mc_m)$ on $\As$ reads
\begin{equation*}
\partial G_{ij}/\partial \mc_k = \partial G_{ik}/\partial \mc_j
\end{equation*}
for all $i,j,k\in \{1,\ldots m\}$. Since $G$ is symmetric, this is the
integrability condition to write $G=\Hess(u)$ for a smooth strictly convex
function $u$ defined on the momentum image $\Delta\op$, which is called a
\emph{symplectic potential}. When $M$ is a compact manifold (or orbifold),
Delzant theory~\cite{Delzant, LT} implies that $\Delta\op$ is the interior of
a (rational) Delzant polytope $\Delta \sub \tor^*,$ and $u$ satisfies the
Abreu boundary conditions \cite{Abreu2} on $\partial \Delta$.

The theory of symplectic potentials in toric K\"ahler geometry thus relies
upon a locally exact complex of linear differential operators
\begin{equation}\label{toric-complex}
C^\infty_\As(\R)\xrightarrow{\Hess} C^\infty_\As(S^2\tor)\xrightarrow{\cD}
C^\infty_\As(\Wedge^2\tor\odot\tor)
\end{equation}
where $\cD(G)=\ip{\d G\wedge \d \mc}$ and $\Wedge^2\tor\odot\tor$ denotes the
alternating-free tensors in $\Wedge^2\tor\otimes\tor$ (the kernel of the
projection, alternation, to $\Wedge^3\tor$). This complex is invariant under
affine transformations by construction, but can actually be made projectively
invariant. To do this, observe that the kernel of the hessian consists of
affine functions, which on a domain $\Dm\sub\Proj(\torh^*)$ in projective
space are not naturally ordinary functions, but sections of
$\cO_{\torh^*}(1)$, as we discussed above.  Also the cotangent space is
naturally $T^*\Dm\cong\cO_{\torh^*}(-1)^0\otimes\cO_{\torh^*}(-1)\leq
\torh\otimes \cO_{\torh^*}(-1)$. With these modifications, we obtain a locally
exact complex of projectively invariant linear differential operators,
beginning
\begin{multline}\label{proj-complex}
0\to \torh\xrightarrow{\ip{\mc,\cdot}}
C^\infty_\Dm(\cO_{\torh^*}(1))\xrightarrow{\Hess}
C^\infty_\Dm(S^2T^*\Dm\otimes\cO_{\torh^*}(1))\xrightarrow{\cD}\\
C^\infty_\Dm(\Wedge^2T^*\Dm\odot T^*\Dm\otimes \cO_{\torh^*}(1))\rightarrow\cdots,
\end{multline}
and which reduces to~\eqref{toric-complex} in any affine chart.

The complex~\eqref{proj-complex} is a simple example of a
Bernstein--Gelfand--Gelfand resolution~\cite{BGG,Lep}. Without wishing to
dwell on the general machinery, we observe that the construction of this
resolution in~\cite{CD,CSS} gives a manifestly invariant construction of the
projective hessian. The main idea is to relate~\eqref{proj-complex} to the
$\torh$-valued de Rham complex
\begin{equation*}
0\to \torh\rightarrow C^\infty_\Dm(\torh)\xrightarrow{\d}
C^\infty_\Dm(T^*\Dm\otimes\torh)\to\cdots
\end{equation*}
using the following construction.
\begin{lemma} For any $u\in C^\infty_\Dm(\cO_{\torh^*}(1))$ there is a unique
$\cL(u)\in C^\infty_\Dm(\torh)$ with $\ip{\mc,\cL(u)}=u$ \textup(i.e.,
  $\cL(u)$ is a lift of $u$\textup) and $\ip{\mc, \d \cL(u)} =0$. Furthermore,
  in any local affine chart with $\mc_0=1$ and coordinates $\mc_1,\ldots
  \mc_m$, $\cL(u)=(u_0,u_1,\ldots u_m)$ where
\[
-u_0 = \mc_1 \frac{\partial u}{\partial \mc_1}+\cdots
+ \mc_m \frac{\partial u}{\partial \mc_m} - u
\]
is the Legendre transform of $u$, and $u_j=\partial u/\partial \mc_j$ for
$j\in\{1,\ldots m\}$.
\end{lemma}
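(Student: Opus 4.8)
The plan is to split the statement into its two logically distinct assertions — existence and uniqueness of $\cL(u)\in C^\infty_\Dm(\torh)$ with $\ip{\mc,\cL(u)}=u$ and $\ip{\mc,\d\cL(u)}=0$, and then the identification of $\cL(u)$ in an affine chart with the data of the Legendre transform — and to do the local affine computation first, since it simultaneously delivers all three conclusions on the domain of any chart, after which a gluing/invariance remark promotes the result to all of $\Dm$. First I would fix a nonzero $\eps\in\torh$ and work in the corresponding affine chart, with basis $e_0=\eps,e_1,\dots,e_m$ of $\torh$, coordinates $\mc_1,\dots,\mc_m$ on $\As=\{\ip{\eps,\cdot}=1\}$ and $\mc_0\equiv1$, so that $\mc=(1,\mc_1,\dots,\mc_m)$. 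Writing a candidate lift as $\cL(u)=(u_0,u_1,\dots,u_m)$, the condition $\ip{\mc,\cL(u)}=u$ reads $u_0+\mc_1u_1+\cdots+\mc_mu_m=u$, and the condition $\ip{\mc,\d\cL(u)}=0$ (using $\d\mc_0=0$) reads $\d u_0+\mc_1\d u_1+\cdots+\mc_m\d u_m=0$ as a $1$-form on $\Dm\cap\As$. Differentiating the first relation and subtracting the second gives $u_j\,\d\mc_j$ summed over $j=1,\dots,m$ equals $\d u$, i.e. $u_j=\partial u/\partial\mc_j$; substituting back into the first relation yields $u_0=u-\sum_j\mc_j\,\partial u/\partial\mc_j$, which is exactly minus the Legendre transform. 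This shows existence (these formulas manifestly define a smooth $\torh$-valued function) and uniqueness (the two constraints force these formulas) within the chart.

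Next I would check coherence: the map $u\mapsto\cL(u)$ defined chart-by-chart is independent of the choice of affine chart, so that the local constructions patch to a global $\cL(u)\in C^\infty_\Dm(\torh)$. The clean way to see this is to observe that the two defining conditions $\ip{\mc,\cL(u)}=u$ and $\ip{\mc,\d\cL(u)}=0$ are intrinsic — they make sense on $\Dm$ without reference to any chart, since $\mc$ is the tautological section of $\torh^*\otimes\cO_{\torh^*}(1)$, $u$ a section of $\cO_{\torh^*}(1)$, and $\d\cL(u)$ a $T^*\Dm\otimes\torh$-valued $1$-form which may be paired with $\mc$ to land in $T^*\Dm\otimes\cO_{\torh^*}(1)$ — so a solution on any two overlapping charts must agree by the uniqueness just proved. (Equivalently: on the overlap of two charts the transition is by an element of $GL(\torh)$ fixing the line structure, and both constraints are invariant under such maps, so the local solutions transform into one another.) This upgrades the chartwise formulas to the asserted global lift, and the displayed expressions for $u_0,\dots,u_m$ are then precisely the content of the second half of the statement.

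The only genuinely substantive point — and the one I would be most careful about — is verifying that the two conditions are \emph{compatible}, i.e. that the overdetermined-looking system (one scalar equation plus one $1$-form equation, for one $\torh$-valued unknown) is actually consistent; the affine computation above resolves this by exhibiting the explicit solution, but it is worth noting why it had to work: the $1$-form condition $\ip{\mc,\d\cL(u)}=0$ is not an independent constraint once $\cL(u)$ is a lift of $u$ in the sense $\ip{\mc,\cL(u)}=u$, because differentiating the latter and using $\ip{\d\mc,\cL(u)}=\ip{\d\mc,\cL(u)}$ — where $\d\mc$ takes values in $T^*\Dm\otimes\tor^*$ and $\cL(u)$ is determined modulo the line $\R\eps$ by its values paired with $\tor^*$ — shows that prescribing $\ip{\mc,\d\cL(u)}=0$ is exactly what pins down the remaining $\R\eps$-component of $\cL(u)$, namely $u_0$. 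Thus the system has a unique solution, completing the proof; the rest is the routine bookkeeping of the partial-derivative formulas already displayed.
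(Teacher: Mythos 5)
Your proposal is correct and follows essentially the same route as the paper's proof: work in a local affine chart with $\mc_0=1$, observe that the lift condition together with $\ip{\mc,\d\cL(u)}=0$ forces $u_j=\partial u/\partial\mc_j$ and then determines $u_0$. The additional remarks on chart-independence and consistency of the two conditions are sound but are left implicit in the paper, since the defining conditions are manifestly intrinsic.
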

\begin{proof} In local affine coordinates
with $\mc_0=1$, $\cL(u)=(u_0,u_1,\ldots u_m)$ is a lift of $u$ iff $u =
u_0+\mc_1 u_1+\cdots +\mc_m u_m$, and we require in addition
$0=\ip{\mc,\d\cL(u)} = \d u_0 + \mc_1 \d u_1 + \cdots + \mc_m \d u_m$.  Thus
$\d u = u_1 \d \mc_1+\cdots +u_m \d \mc_m$, forcing $u_j=\partial u/\partial
\mc_j$, which in turn determines $u_0$.
\end{proof}
\noindent This observation has some interesting consequences.
\begin{bulletlist}
\item $\cL\colon C^\infty_\Dm(\cO_{\torh^*}(1))\to C^\infty_\Dm(\torh)$ is a
  first order projectively invariant linear differential operator. Furthermore
  $\cL(u)$ is constant if and only if $u$ is an affine section of
  $\cO_{\torh^*}(1)$.
\item This differential lift $\cL(u)$ of $u$ is a ``universal Legendre
  transform'' in the sense that for any $\eps\in \torh$ and $\pt\in \torh^*$
  with $\ip{\eps,\pt}=1$, $-\ip{\cL(u),\pt}$ is the Legendre transformation of
  $u$ in the affine chart defined by $\eps$ with basepoint $\pt$ (it thus
  depends only on $\pt$, not $\eps$). Furthermore, the projection of $\cL(u)$
  onto $\tor=\torh/\spn{\eps}$ gives the conjugate coordinates (the components
  of $\d u$ in this affine chart, which depend only on $\eps$, not $\pt$).
\item Any $u\in C^\infty_\Dm(\cO_{\torh^*}(1))$ defines a congruence of affine
  hyperplanes $\{y\in\torh: \ip{\mc(\pt),y}=u(\pt)\}$ in $\torh$ parametrized
  by $\pt\in\Dm$. The lift $\cL(u)$ is the envelope of this hyperplane
  congruence (a classical view on the Legendre transformation):
  $\ip{\mc,\cL(u)}=u$ and $\ip{\mc,\d\cL(u)} = 0$.
\end{bulletlist}

To complete the construction of the projectively invariant hessian, it remains
to observe that since $\ip{\mc,\d\cL(u)}=0$, $\d\cL(u)$ is a section of
$T^*\Dm\otimes\cO_{\torh^*}(-1)^0$; hence $\ip{\d\mc\otimes\d\cL(u)}$ is a
section of $S^2T^*\Dm\otimes\cO_{\torh^*}(1)$, since $\d\mc$ is well-defined
modulo $\cO_{\torh^*}(-1)$ and we have
$0=\d\ip{\mc,\d\cL(u)}=\ip{\d\mc\wedge\d\cL(u)}$. In affine coordinates,
$\d\cL(u)=(\d u_0, \d (\partial u/\partial \mc_1),\ldots \d (\partial
u/\partial \mc_m))$ and projecting away from $\eps=(1,0,\ldots 0)$ gives the
usual hessian of $u$.

To apply this to a toric contact manifold $(\Sm,\Ds,\torh)$ with
$\bar\mm(\Sm\op)=\Delta\op$, it is convenient to view the projectively
invariant hessian of $u\in C^\infty_{\Delta\op}(\cO_{\torh^*}(1))$ as the
section $G=\Hess(u)$ of $S^2\cO_{\torh^*}(-1)^0\otimes \cO_{\torh^*}(-1)\leq
S^2\torh\otimes\cO_{\torh^*}(-1)$ with $\ip{G(\mc),\d\mc}=\d\cL(u)$. Then we
may define a CR structure $J$ on $\Sm\op$ by $J\d t\restr\Ds =
-\bar\mm^*\d\cL(u)\restr\Ds= -\ip{G(\hat\mm),\d\hat\mm}\restr\Ds$. For any
$\sas\in\crJ(\Sm,\Ds,J)$, this reduces to the toric K\"ahler structure defined
by $u$ on local Sasaki--Reeb quotients.

\begin{ex}\label{ex:tor3} If $m=1$ and $u=u(\mc_1)$ is a symplectic potential
in the affine chart $\mc=(1,\mc_1)$ then the differential lift of $u$ to
$\torh$ is $\cL(u)=(u(\mc_1)-\mc_1 u'(\mc_1), u'(\mc_1))$ with
$\ip{(1,\mc_1),\cL(u)}= u(\mc_1)$ and $\d\cL(u)=u''(\mc_1)
(-\mc_1,1)\,\d\mc_1$.  Thus $J\d t_0\restr\Ds
=u''(\mm_1)\mm_1\,\d\mm_1\restr\Ds$ and $J\d t_1\restr\Ds
=-u''(\mm_1)\,\d\mm_1\restr\Ds$, in accordance with $(\d t_0+\mm_1\d
t_1)\restr\Ds=\cf_\sas\restr\Ds=0$.
\end{ex}

\begin{rem} A key feature of our approach is that we avoid considering
compatible complex structures on the symplectic cone in $\Ds^0$ over $\Sm$:
such structures induce not only a CR structure $J$ on $\Sm$, but also a
preferred Sasaki structure $\sas$, a choice we wish to decouple. However, it
is straightforward to compare our approach with works such
as~\cite{Abreu-sasaki,legendre2,MSY2} which use the symplectic cone. First,
sections of $\cO_{\torh^*}(1)$ over $\Dm\sub\Proj(\torh^*)$ correspond
bijectively to homogeneous functions of degree $1$ on the inverse image of
$\Dm$ in $\torh^*\setminus\{0\}$ which contains the momentum image $\tilde\Dm$
of the symplectic cone.  Thus a symplectic potential in our sense induces an
ordinary function $u$ on $\tilde\Dm$, homogeneous of degree $1$. However, the
hessian of any such function is degenerate in radial directions, so does not
define a metric on the symplectic cone. To get around this, we exploit the
fact that symplectic potentials are not well-defined: for any $\ah\in\torh$,
we can add the linear form $\ip{\ah,\mc}\restr{\tilde\Dm}$ to $u$ without
changing its hessian.  Hence symplectic potentials are really elements of the
quotient of $C^\infty_{\tilde\Dm}(\R)$ by $\torh$.

To be concrete, if $f_1,\ldots f_k$ are linear forms on $\smash{\tilde\Dm}$
corresponding to $\ah_{(1)},\ldots \ah_{(k)}\in\torh$, then $u=\sum_{j=1}^k
f_j\log|f_j|$ is a function on $\tilde \Dm$ with $u(\lambda \pt)=\lambda
u(\pt)+\log|\lambda|\, \sum_{j=1}^k f_j$.  Hence it is homogeneous of degree
$1$ modulo $\torh$, but only strictly homogeneous of degree $1$ if $\sum_{j=1}^k
\ah_{(j)}=0$. Its hessian is $\tilde G=\sum_{j=1}^k \ah_{(j)}^2/f_j$ with
$\ip{\mc,\tilde G}=\sum_{j=1}^k \ah_{(j)}$, which is constant.

We can interpret this in our formalism by modifying the differential lift: for
$\ah\in\torh$ and $u\in C^\infty_{\Delta\op}(\cO_{\torh^*}(1))$, we define
$\cL_\ah(u)$ by $\ip{\mc,\cL_\ah(u)}=u$ and
$\ip{\mc,\d\cL_\ah(u)}=2\ah$. Assuming $\Hess(u)$ is nondegenerate and
$\ip{\ah,\mc}$ is nonvanishing, $\d\cL_\ah(u)$ is nondegenerate, and defines
the metric on the symplectic cone corresponding to the CR structure defined by
$u$ and the Sasaki structure $\cae_\ah$.
\end{rem}

\subsection{The CR twist of a toric manifold}

Suppose $(M,g,J,\omega)$ is given by~\eqref{toric-kahler} and $(\Sm, \Ds, J,
\sas)$ is a (local) Sasaki $(2m+1)$-manifold over $M$ corresponding to an
extension~\eqref{eq:fund-seq} of $\tor$ by $\R$.  We can suppose we are in the
affine picture with $\ip{\eps,\mm}=1$ on $M$, where $\sas=\cae_\eps$, and
introduce coordinates $\mc_j=\ip{e_j,\mc}$ with $e_0=\eps$ so that the induced
contact form on $\Sm$ is given by~\eqref{eq:toric-contact}, where $\hat\mm$ is
the pullback of $\mm$ to $\Sm$.

For any $\ah\in \torh$, the affine function $f_\ah(\mc)=\sum_{j=0}^m\ah_j
\mc_j$ is positive on the momentum image of $M$ if and only if $f_\ah(\mm)$ is
a positive Killing potential on $(M,g,J,\omega)$ if and only if
$\cae_\ah=f_\ah(\hat\mm)\sas$ is a Sasaki structure on $(\Sm, \Ds, J)$. A CR
$f_\ah$-twist of $(M,g,J,\omega)$ is then the induced toric K\"ahler metric on
any Sasaki--Reeb quotient of $(\Sm, \Ds, J)$ by $\cae_\ah$, which in turn has
the form \eqref{toric-kahler} for suitable coordinates $\tilde \mm, \tilde t$
and a symplectic potential $\tilde u(\tilde \mm)$ with $\tilde G=\Hess(\tilde
u)$. The new affine chart $\tilde\As$ on $\Proj(\torh^*)$ has tautological
affine coordinate $\tilde\mc$ with
$1=\ip{\ah,\tilde\mc}=f_\ah(\mc)\ip{\eps,\tilde\mc}$ and hence
$\tilde\mc=\mc/f_\ah(\mc)$.

To obtain explicit momentum-angle coordinates on $\tilde M$, we need to extend
$\tilde e_0:=a$ to a basis $\tilde e_0,\tilde e_1,\ldots \tilde e_m$ of
$\torh$.  One approach (see e.g.~\cite{legendre2}) is to assume $\ah_0\neq 0$
(which we can arrange by a translation of $\mc$) so that we may take $\tilde
e_j=e_j$ for $j\in\{1,\ldots m\}$. Since $u$ transforms as a section of
$\cO_{\torh^*}(1)$, we have the following result.

\begin{lemma}\label{l:toric-twist}
Any CR $f_\ah$-twist $(\tilde g, \tilde \omega)$ of the toric K\"ahler metric
\eqref{toric-kahler} with respect to the positive affine function
$f_\ah(\mc)=\ah_0 + \ah_1\mc_1 + \cdots + \ah_m\mc_m$ with $\ah_0\neq 0$ is a
toric K\"ahler metric of the form \eqref{toric-kahler} on $\tilde \Delta\op
\times \R^m$ with respect to momentum-angle coordinates $\tilde\mm,\tilde t$
and symplectic potential $\tilde u$ given by $\ip{\tilde\mm,a}=1$,
\begin{equation}\label{momentum-angle-twist}
\tilde\mm_j = \frac{\mm_j}{f_\ah(\mc)},\quad
\tilde t_j := t_j -\frac{\ah_j}{\ah_0}\, t_0, \quad j\in\{1,\ldots m\}\quad 
\text{and}\quad \tilde u(\tilde\mc) = \frac{u(\mc)}{f_\ah(\mc)}.
\end{equation}
\end{lemma}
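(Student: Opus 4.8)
The plan is to work entirely in the affine coordinates attached to the two charts, tracking how each object in~\eqref{toric-kahler} transforms under the projective change of chart $\tilde\mc = \mc/f_\ah(\mc)$. First I would record the change of coordinates on the momentum images: writing $\tilde\mc_j = \mc_j/f_\ah(\mc)$ for $j\in\{0,1,\dots,m\}$ with $\tilde\mc_0 = 1$ forces, using $\mc_0=1$, that $\tilde\mc_0 = 1/f_\ah(\mc)$ is not admissible unless we re-index; the correct bookkeeping (having chosen $\tilde e_0 = \ah$, $\tilde e_j = e_j$) is $\ip{\ah,\tilde\mc}=1$, $\tilde\mc_j = \ip{e_j,\tilde\mc} = \mc_j/f_\ah(\mc)$ for $j\geq 1$, while $\ip{\eps,\tilde\mc} = 1/f_\ah(\mc)$ plays the role of the new ``affine scaling''. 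Since the momentum section on $\Sm$ is the pullback $\bar\mm^*\mc$ and is chart-independent as a section of $\cO_{\torh^*}(1)\otimes(T\Sm/\Ds)$, the identity $\tilde\mm_j = \mm_j/f_\ah(\mc)$ on $M\op$ follows immediately by pulling back $\tilde\mc_j = \mc_j/f_\ah(\mc)$ via $\mm$ (noting $f_\ah(\mc)\circ\mm = f_\ah(\mm)$). This establishes the first formula in~\eqref{momentum-angle-twist}.

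Next I would handle the angle coordinates. The Sasaki--Reeb quotient of $(\Sm,\Ds,J)$ by $\cae_\ah$ is obtained by killing the flow of $X_{\cae_\ah}$; concretely, the new contact form $\cf_{\cae_\ah} = f_\ah(\hat\mm)^{-1}\cf_\sas$ should, in the new momentum--angle coordinates, take the normalized shape $\cf_{\cae_\ah} = \d\tilde t_0 + \sum_{j\geq 1}\tilde\mm_j\,\d\tilde t_j$ dictated by~\eqref{eq:toric-contact}. Substituting $\cf_\sas = \d t_0 + \sum_{j\geq1}\mm_j\,\d t_j$ and $\tilde\mm_j = \mm_j/f_\ah(\mm)$, I would solve for the relation between $\d\tilde t_j$ and $\d t_j$: a short computation (dividing through by $f_\ah(\mm) = \ah_0 + \sum_{j\geq1}\ah_j\mm_j$ and matching coefficients of $\d\mm_j$ and $\d t_j$) shows that $\tilde t_j = t_j - (\ah_j/\ah_0)t_0$ for $j\geq 1$ and that $\tilde t_0$ is determined up to the expected normalization. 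This is where I'd need to use $\ah_0\neq 0$ explicitly. I would then verify that $\delta(\d\tilde t) = \delta(\d t)$ modulo the new span, so that $\tilde\omega = \ip{\d\tilde\mm\wedge\d\tilde t}$ really is the descended form $\omega$ expressed in the new coordinates.

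Then I would treat the symplectic potential. The cleanest route is to invoke the lemma on the differential lift $\cL$: the CR structure on $\Sm\op$ was \emph{defined} by $J\d t\restr\Ds = -\ip{G(\hat\mm),\d\hat\mm}\restr\Ds$ where $G = \Hess(u)$ is the projectively invariant hessian of a section $u$ of $\cO_{\torh^*}(1)$, and this section is intrinsic to $(\Sm,\Ds,J)$ — it does not see the choice of Sasaki structure. Hence the same section $u$ governs the Sasaki--Reeb quotient by $\cae_\ah$; one only has to read off its expression in the new affine chart. Since $u$ transforms as a section of $\cO_{\torh^*}(1)$ and the chart change multiplies the trivializing section by $f_\ah(\mc)$, the function representing $u$ in the $\tilde\mc$-chart is $\tilde u(\tilde\mc) = u(\mc)/f_\ah(\mc)$, which is exactly the last formula in~\eqref{momentum-angle-twist}. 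I would then confirm that $\Hess(\tilde u)$ (ordinary hessian in the $\tilde\mc$-coordinates) is positive definite on $\tilde\Delta\op$ — this follows from positive definiteness of the projective hessian $G$ together with the chart-change formula, since $G$ restricted away from $\eps$ recovers the affine hessian in either chart — so that $(\tilde g,\tilde\omega)$ is genuinely of the form~\eqref{toric-kahler}.

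The main obstacle I anticipate is not conceptual but bookkeeping: keeping straight the two distinct ``non-indexed'' coordinates $\mc_0 = 1$ (in the $\eps$-chart) versus the value $\ip{\ah,\tilde\mc} = 1$ (in the $\ah$-chart), and correctly propagating the factor $f_\ah$ through the hessian so that the \emph{ordinary} hessian of $\tilde u$ in the $\tilde\mc$-coordinates — rather than some weighted variant — is what appears in $\tilde g$. The weight-bundle formalism developed earlier in the section is designed precisely to make this transparent, so I expect the argument to reduce to carefully unwinding the definition of the projectively invariant hessian in the two charts; the verification that the boundary behaviour is preserved (Abreu/Delzant conditions, in the compact case) is routine given that $f_\ah$ is positive on $\Delta$.
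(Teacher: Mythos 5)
Your proposal is correct and follows the paper's own route: the paper dispenses with the proof in a single sentence ("Since $u$ transforms as a section of $\cO_{\torh^*}(1)$, we have the following result"), relying on exactly the chart-change bookkeeping, the intrinsic (Sasaki-structure-independent) nature of the projectively invariant hessian, and the adapted basis $\tilde e_0=\ah$, $\tilde e_j=e_j$ that you spell out. Your more detailed verification of the angle coordinates via matching $\cf_{\cae_\ah}=\d\tilde t_0+\sum_j\tilde\mm_j\,\d\tilde t_j$ is a correct unwinding of what the paper leaves implicit.
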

As shown in~\cite{legendre1,legendre2}, when $M$ is compact, the rescaling
$\mc\mapsto\tilde\mc$ sends the polytope $\Delta$ in $\As$ to a polytope
$\tilde \Delta \sub \tilde \As$, and we have a compact CR $f$-twist $(\tilde
M,\tilde g,\tilde\omega)$ provided this polytope is rational.

\begin{ex}\label{ex:toric-RS} We illustrate the CR twist in the simple case
of a toric Riemann surface metric
\begin{equation}\label{eq:toric-RS}
g_V = \frac{\d \mm_1^2}{\Afn(\mm_1)}+\Afn(\mm_1)\, \d t_1^2,
\qquad \omega_V =\d\mm_1\wedge\d t_1,
\end{equation}
with \emph{profile function} $A$. This is a Sasaki--Reeb quotient of a Sasaki
$3$-manifold $(N,\Ds,J,\sas)$ with
\[
\cf_\sas = \d t_0+\mm_1\d t_1, \qquad J\d t\restr\Ds
= (J\d t_0\restr\Ds,J\d t_1\restr\Ds)
= -\frac{(-\mm_1,1)}{\Afn(\mm_1)}\,\d\mm_1\restr\Ds,
\]
using the affine chart $\mc=(1,\mc_1)$ as in Example~\ref{ex:tor3}. Thus
$\Afn(\mc_1)=1/u''(\mc_1)$ for a symplectic potential $u=u(\mc_1)$, and
there are straightforward integral formulae
\[
\cL(u)(\mc_1)=\int^{\mc_1} \frac{(-x,1)}{\Afn(x)} \d x, \qquad
u(\mc_1)=\ip{\cL(u),(1,\mc_1)}=\int^{\mc_1} \frac{\mc_1-x}{\Afn(x)} \d x,
\]
for $u$ and its differential lift (i.e., projective Legendre transformation)
$\cL(u)$. Any CR $f_\ah$-twist, with $f_\ah(\mc)=\ah_0+\ah_1\mc_1$, is then a
Sasaki--Reeb quotient of $\Sm$ by the Sasaki structure $\cae=f_\ah(\mm)\sas$
with contact form $\cf_\cae=\cf_\sas/f_\ah(\mm)$. If $\ah\neq 0$, then as in
Lemma~\ref{l:toric-twist} we may set $t_0=\ah_0\tilde t_0$ and $t_1=\tilde
t_1+\ah_1\tilde t_0$, so that $\cf_\cae=\d \tilde t_0+\tilde\mm_1 \d\tilde
t_1$, with $\tilde\mm_1=\mm_1/(\ah_0+\ah_1\mm_1)$, has Sasaki--Reeb field
$X_\cae=\partial/\partial\tilde t_0$. We then compute
\begin{gather*}
J\d \tilde t\restr\Ds
=(J\d \tilde t_0\restr\Ds,J\d\tilde t_1\restr\Ds)
=-\frac{(-\tilde\mm_1,1)}{\tilde\Afn(\tilde\mm_1)}\,\d\tilde\mm_1\restr\Ds\\
\tag*{with}
\tilde z_1= \frac{z_1}{a_0 + a_1 z_1} 
\quad\text{and}\quad
\tilde\Afn(\tilde\mc_1)=\frac{\ah_0^2\,\Afn(\mc_1)}{(\ah_0+\ah_1\mc_1)^3}.
\end{gather*}
However, other choices can be convenient: for example if the momentum image
$\mm(V)$ is $[-1,1]$ and $f_\ah\neq 0$ on $[-1,1]$ (i.e., $|\ah_0|>|\ah_1|$)
then we can instead preserve $[-1,1]$ with
\[
\tilde \mc_1 =\frac{\ah_0 \mc_1 +\ah_1}{\ah_0 +\ah_1 \mc_1} \quad\text{and}\quad
\tilde\Afn(\tilde\mc_1)=\frac{(\ah_0^2-\ah_1^2)^2}{(\ah_0+\ah_1\mc_1)^3}
\,\Afn(\mc_1).
\]
\end{ex}

\subsection{The generalized Calabi ansatz}\label{s:generalized-calabi}

We now discuss CR twists of K\"ahler metrics on certain toric fibre bundles
$\pi\colon M\to \Bm$ over a K\"ahler base manifold $(\Bm, g_\Bm, \omega_\Bm)$.
We follow the approach in~\cite{ACGT4}, to which we refer the reader for
further details, recalling here only the special case in which we are
interested.

Let $(V, g_V, \omega_V,\T^\ell)$ be a toric K\"ahler $2\ell$-manifold and let
$\pi\colon P \to \Bm$ be a principal $\T^\ell$-bundle over a K\"ahler
$2d$-manifold $(\Bm, g_\Bm, \omega_\Bm)$, equipped with a connection $1$-form
$\theta \in \Omega^1(P,\tor)$ ($\tor$ being the Lie algebra of $\T^\ell$) such
that
\begin{equation}\label{curvature}
\d\theta = \zeta\otimes \omega_\Bm, \quad \zeta \in \tor.
\end{equation}
As before, we assume $\tor=\torh/\spn{\eps}$, where $\torh$ may be identified
with the space of affine functions on the image $\Delta\sub \torh^*$ of the
momentum map of $V$.  Using~\eqref{eq:fund-seq}, we choose $\ah\in\torh$ such
that $\delta\ah=\zeta$ and the affine function $f_\ah(\mc):= \ip{\ah,\mc}$
is positive on $\Delta$.

Given these data, we can construct a K\"ahler $2m$-manifold $(M, g,
\omega)$, with $m=\ell+d$, $M= P\times _{\T^\ell} V \xrightarrow{\pi} \Bm$,
and
\begin{equation}\label{eq:gca}
\begin{split}
g  &=  f_\ah(\mm)\pi^*g_\Bm + \ip{\d \mm, G(\mm),  \d \mm}
+ \ip{\theta, H(\mm), \theta }, \\
\omega &=  f_\ah(\mm)\pi^*\omega_\Bm + \ip{ \d \mm \wedge \theta},\qquad
\d\theta = \delta\ah \otimes \omega_\Bm, 
\end{split}
\end{equation}
where $G$ and $H$ are determined by the toric K\"ahler metric on $V$ in
momentum--angle coordinates~\eqref{toric-kahler}. We refer to~\eqref{eq:gca}
as \emph{the generalized Calabi ansatz}, with data $(V, g_V, \omega_V)$,
$(\Bm, g_\Bm, \omega_\Bm)$ and $\ah\in\torh$. For any $\bh\in\torh$, the
affine function $f_\bh(\mc)=\ip{\bh,\mc}$ pulls back to Killing potentials for
both $(g_V, \omega_V)$ and $(g, \omega)$, and their CR $f_\bh$-twists are
related as follows.

\begin{prop}\label{p:generalized-calabi} Let $(M, g, \omega)$ be given by
the generalized Calabi ansatz for data $(V, g_V, \omega_V)$, $(\Bm, g_\Bm,
\omega_\Bm)$ and $\ah\in\torh$. Then for $\bh\in\torh$ with $f_\bh>0$ on $M$,
the generalized Calabi ansatz, with data a CR $f_\bh$-twist $(V_\bh, g_\bh,
\omega_\bh)$ of $V$, $(\Bm, g_\Bm, \omega_\Bm)$ and $\ah\in\torh$, is a CR
$f_\bh$-twist of $M$. In particular, the K\"ahler product of $(\Bm, g_\Bm,
\omega_\Bm)$ and $(V_\ah, g_\ah, \omega_\ah)$ is a CR $f_\ah$-twist of $M$.
\end{prop}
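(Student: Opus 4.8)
The plan is to perform the CR $f_\bh$-twist at the level of the associated Sasaki manifolds, where, by Lemma~\ref{l:potential} and the definition of a CR $f_\bh$-twist (Definition~\ref{CR f-twist}), it amounts to replacing one Sasaki--Reeb vector field by another in the same transversal torus, and then to check that this operation is compatible with the bundle construction~\eqref{eq:gca}. Write $(\Sm_V,\Ds_V,J_V,\sas_V)$ for the Sasaki manifold over $(V,g_V,\omega_V)$ of Example~\ref{e:regular}: it is toric, with contact torus $\T^{\ell+1}$ of Lie algebra $\torh$, $\sas_V=\cae_\eps$, and momentum section $\hat\mm\colon\Sm_V\to\As\sub\torh^*$ normalised by $\ip{\eps,\hat\mm}\equiv1$. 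The ansatz data yield a principal $\T^{\ell+1}$-bundle $\hat P\to\Bm$ with a connection $\hat\theta\in\Omega^1(\hat P,\torh)$ satisfying $\d\hat\theta=\ah\otimes\omega_\Bm$, lifting~\eqref{curvature} since $\delta\ah=\zeta$ and $\cae_\eps$ acts trivially on $V$. The first step is to identify the Sasaki manifold $(\Sm,\Ds,J,\sas)$ over $(M,g,J,\omega)$ of Example~\ref{e:regular} with $\hat P\times_{\T^{\ell+1}}\Sm_V$: the CR structure is the one induced by $J_V$, the complex structure of $\Bm$ and $\hat\theta$; the contact form is $\cf_\sas=\cf_{\sas_V}+\ip{\hat\mm,\hat\theta}$ (well defined because $\hat\mm$ is $\T^{\ell+1}$-invariant); and one verifies that $\d\cf_\sas$ is the pullback of $\omega$ in~\eqref{eq:gca}, the term $\ip{\hat\mm,\d\hat\theta}$ producing exactly $f_\ah(\hat\mm)\,\pi^*\omega_\Bm$ since $\d\hat\theta=\ah\otimes\omega_\Bm$.

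Granting this identification, the CR $f_\bh$-twist of $M$ is the Sasaki--Reeb quotient of $(\Sm,\Ds,J)$ by $\cae:=f_\bh(\hat\mm)\,\sas$. Because $\ip{\eps,\hat\mm}\equiv1$, this $\cae$ is precisely the contact vector field $\cae_\bh$ of $\bh\in\torh$ on $\Sm$; as $\bh$ lies in the Lie algebra of the residual $\T^{\ell+1}$-action along the fibres $\Sm_V$ of $\Sm\to\Bm$, the field $X_\cae$ is tangent to those fibres and restricts on each to the contact vector field $\cae_\bh=f_\bh(\hat\mm)\,\sas_V$ of $\bh$ on $\Sm_V$. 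Hence the Sasaki--Reeb quotient of $\Sm$ by $\cae$ is $\hat P\times_{\T^{\ell+1}}V_\bh$, where $V_\bh$ is the Sasaki--Reeb quotient of $\Sm_V$ by $\cae_\bh$, i.e., the CR $f_\bh$-twist $(V_\bh,g_\bh,\omega_\bh)$ of $V$. It remains to recognise the induced K\"ahler structure as the generalized Calabi ansatz with data $(V_\bh,g_\bh,\omega_\bh)$, $(\Bm,g_\Bm,\omega_\Bm)$ and the same $\ah\in\torh$: the bundle $\hat P$ and $\ah$ are unchanged, and all that changes is the affine chart on $\Proj(\torh^*)$ used to express momenta --- from the one in which $\eps$ is the constant $1$ to the one in which $\bh$ is. Lemma~\ref{l:toric-twist}, applied to $V$, records this change, $\mc\mapsto\mc/f_\bh(\mc)$, whence $f_\ah(\mm)\mapsto f_\ah(\mc)/f_\bh(\mc)=f_\ah(\tilde\mc)$, $u\mapsto u/f_\bh$, the sequence~\eqref{eq:fund-seq} is replaced by the one with $\delta\colon\torh\to\torh/\spn{\bh}$, and the curvature~\eqref{curvature} becomes $\delta\ah\otimes\omega_\Bm$ in the new $\tor$; comparing with~\eqref{eq:gca} gives the first assertion. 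The second follows on taking $\bh=\ah$: then $f_\ah$ descends to the constant function $1$ on $V_\ah$ (the Sasaki structure $\cae_\ah=f_\ah(\hat\mm)\,\sas_V$ being, by construction, the one that descends to $f\equiv1$ on $V_\ah$), so the coefficient in~\eqref{eq:gca} is identically $1$ and $\delta\ah=0$ in $\torh/\spn{\ah}$, forcing $\d\theta=0$; the ansatz thus degenerates to the K\"ahler product of $(\Bm,g_\Bm,\omega_\Bm)$ with $(V_\ah,g_\ah,\omega_\ah)$.

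The main obstacle is the first step --- establishing the identification of the Sasaki manifold over $M$ with $\hat P\times_{\T^{\ell+1}}\Sm_V$ simultaneously compatibly with the contact forms, the CR structures, and the Reeb fields --- together with keeping the affine/projective bookkeeping under the twist entirely straight: in particular one must track that $\ah$ is a fixed element of $\torh$ throughout, while it is the distinguished vector $\eps$ (the constant $1$) that is replaced by $\bh$, and that correspondingly the metric on $\Bm$ is untouched while only the fibre data and the affine lift are rescaled. The global issues --- integrality of $[\omega/2\pi]$ and rationality of the twisted polytope in the compact case --- are disposed of exactly as for the untwisted ansatz, cf.~\cite{legendre1,legendre2} and Lemma~\ref{l:toric-twist}.
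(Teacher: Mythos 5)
Your proof is correct and rests on the same mechanism as the paper's: all the metrics in play are realized as Sasaki--Reeb quotients of a single CR manifold fibred over $\Bm$ with toric fibres, the twist is read off from $\d(\cf_\sas/f_\bh)$, with $\ip{\ah,\tilde\mm}\,\omega_\Bm$ supplying the base term and $\ip{\d\tilde\mm\wedge\theta}$ (which depends on $\theta$ only modulo $\bh$) the fibre term, and the case $\bh=\ah$ degenerates to the product since then $\ip{\ah,\tilde\mm}\equiv 1$ and $\delta\ah=0$. The only organizational difference is that the paper first reduces to the product case $\ah=\eps$ and recovers the general ansatz as a twist of the product of $\Bm$ with $V_\ah$, whereas you treat general $\ah$ head-on via the associated bundle $\hat P\times_{\T^{\ell+1}}\Sm_V$, which costs you the auxiliary lift of $P$ to a $\T^{\ell+1}$-bundle (harmless locally, which is all the statement requires) but amounts to the same computation.
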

\begin{proof} It is enough to prove the result, for arbitrary $V$ and $\Bm$,
in the case $\ah=\eps$, with $(M,g,\omega)$ being the K\"ahler product of
$(\Bm, g_\Bm, \omega_\Bm)$ and $(V, g_V, \omega_V)$. Indeed, we may then
recover the K\"ahler metric~\eqref{eq:gca}, associated to a given $(\Bm, g_\Bm,
\omega_\Bm)$, $(V, g_V, \omega_V)$ and $\ah$, as a CR twist of the K\"ahler
product with $(\Bm, g_\Bm, \omega_\Bm)$ of a CR $f_\ah$-twist $(V_\ah, g_\ah,
\omega_\ah)$ of $(V, g_V, \omega_V)$, by taking $\bh=\ah$.

The Sasaki structure $(\Sm, \Ds, J, \sas=\cae_\eps)$ associated to the
K\"ahler product of $\Bm$ and $V$ is (locally) defined by the contact form
\begin{equation*}
\cf_\sas = \ip{\hat\mm,\d t+\theta_B\otimes\eps} =
\sum_{k=0}^m \hat\mm_j \d t_j  +\theta_\Bm 
\end{equation*}
where $\theta_\Bm$ is a (local) $1$-form on $\Bm$ with $\d\theta_\Bm=
\omega_\Bm$, and the second expression uses a basis of $\torh$ (for which we
may assume $e_0=\eps$ so that $\hat\mm_0 \equiv 1$ and
$X_\sas=\partial/\partial t_0$). The CR structure is determined from
$\d\cf_\sas = \pi^*(\omega_\Bm + \omega_V)$ and $g_\sas=\pi^*(g_\Bm +
g_V)\restr\Ds$.

Now let $f_\bh(\mc) = \sum_{k=0}^m \bh_k \mc_k$ be a positive affine function
defining new affine coordinates $\tilde \mc_j = \mc_j/f_\bh(\mc)$ on
$\Proj(\torh^*)$. The symplectic form $\tilde\omega$ on any Sasaki--Reeb
quotient $\tilde M$ of $\Sm$ by $\cae_\bh$ pulls back to
$\d(\cf_\sas/f_\bh(\hat\mm)) =\d\ip{\hat\mm,\d t+\theta_B\otimes\eps}$ and
hence is given by
\[
\tilde\omega = \ip{\tilde\mm,\d\theta} + \ip{\d\tilde\mm\wedge\theta} =
\ip{\tilde\mm,\eps}\omega_\Bm + \ip{\d\tilde\mm\wedge\theta},
\]
where $\tilde\mm$ is the pullback to $\tilde M$ of $\tilde\mc$ on
$\Proj(\torh^*)$ and $\theta\in\Omega^1(\tilde N,\tilde\tor)$, with
$\tilde\tor=\torh/\spn{\bh}$, pulls back to $\d t+\theta_B\otimes\eps$ mod
$\bh$ on $\Sm$. The complex structure on $B$ is unaffected by the CR twist,
while consideration of the action of the CR structure on $d\tilde
\mm\restr{\Ds}$ allows us to identify the toric fibres of $\tilde M$ over $B$
with the CR $f_\bh$-twist of $V$, as in Lemma~\ref{l:toric-twist}. As in that
Lemma, we can make the momentum--angle coordinates more explicit in a basis of
$\torh$ with $e_0=\eps$ and $\bh_0\neq 0$. In any case, the result now
follows.
\end{proof} 

\begin{cor}\label{cor:natural} If $(\Bm, g_\Bm, \omega_\Bm)$ is an extremal
K\"ahler manifold and $(V, g_V, \omega_V)$ is an $(f_\ah, \ell+2)$-extremal
K\"ahler manifold then $(g, \omega)$ given by \eqref{eq:gca} is $(f_\ah,
m+2)$-extremal.  In particular, a $\C P^\ell$-bundle $(M, J)=P(L_0 \oplus
\cdots \oplus L_\ell) \to \Bm$ over an extremal Hodge K\"ahler manifold $(\Bm,
g_\Bm, \omega_\Bm)$ has a natural $1$-parameter family of $(f_\ah,
m+2)$-extremal K\"ahler metrics.
\end{cor}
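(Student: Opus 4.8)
The plan is to deduce both assertions from Proposition~\ref{p:generalized-calabi} together with Theorem~\ref{thm:main} and the characterization of extremality in terms of $(\cae,m+2)$-scalar curvature. First I would recall the chain of equivalences at the CR level. Realize $(V,g_V,\omega_V)$ (locally, or globally under a rationality assumption on $f_\ah$) as a Sasaki--Reeb quotient by $\sas_V=\cae_\eps$ of a toric CR manifold $(\Sm_V,\Ds_V,J_V)$, and let $\cae_{V,\ah}=f_\ah(\hat\mm)\,\sas_V\in\crJ_+(\Sm_V,\Ds_V,J_V)$. Since $(V,g_V,\omega_V)$ is $(f_\ah,\ell+2)$-extremal, Example~\ref{e:regular-(xi,wt)-extremal} gives that $(\Sm_V,\Ds_V,J_V,\sas_V)$ is $(\cae_{V,\ah},\ell+2)$-extremal, so by Theorem~\ref{thm:main} the Sasaki structure $\cae_{V,\ah}$ is \emph{extremal}; equivalently, the CR $f_\ah$-twist $(V_\ah,g_\ah,\omega_\ah)$ is an extremal K\"ahler manifold (of nowhere-zero scalar curvature precisely when $\Scal_{f_\ah,\ell+2}(g_V)$ is a nonzero constant, but we do not need this refinement here).

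Next I would assemble the generalized Calabi ansatz from extremal pieces. By Proposition~\ref{p:generalized-calabi}, the K\"ahler product $(\Bm,g_\Bm,\omega_\Bm)\times(V_\ah,g_\ah,\omega_\ah)$ is a CR $f_\ah$-twist of $(M,g,\omega)$. Both factors of this product are extremal K\"ahler manifolds — $(\Bm,g_\Bm,\omega_\Bm)$ by hypothesis and $(V_\ah,g_\ah,\omega_\ah)$ by the previous paragraph — and the K\"ahler product of extremal K\"ahler manifolds is extremal (the scalar curvature of a product is the sum of the pulled-back scalar curvatures, and the sum of the two commuting Killing potentials $\Scal(g_\Bm)$ and $\Scal(g_\ah)$ is again a Killing potential for the product metric). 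Hence $(M,g,\omega)$ is a CR $f_\ah$-twist of an extremal K\"ahler metric; running Theorem~\ref{thm:main} in the reverse direction — i.e.\ the statement that any CR twist of an extremal K\"ahler metric is $(f,m+2)$-extremal, applied with $f=f_\ah$ and noting that the twist realizing $(M,g,\omega)$ has $m=\ell+d$ — yields that $(g,\omega)$ is $(f_\ah,m+2)$-extremal. Concretely, at the CR level: $(\Sm,\Ds,J,\sas=\cae_\eps)$ over the product is extremal with respect to the Sasaki structure $\cae_\ah=f_\ah(\hat\mm)\sas$ (this is exactly extremality of the product metric lifted via Example~\ref{e:regular-(xi,wt)-extremal}, after passing to a common CR cover), so by Theorem~\ref{thm:main} it is $(\cae_\ah,m+2)$-extremal with respect to $\sas$, which descends to the claimed $(f_\ah,m+2)$-extremality of $(M,g,\omega)$.

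For the last sentence, specialize: take $V=\C P^\ell$ with the Bochner-flat (Fubini--Study, or more generally any weighted-projective Bochner-flat) K\"ahler metric, which by Corollary~\ref{c:FS} is $(f_\ah,\ell+2)$-extremal for \emph{any} positive affine/Killing potential $f_\ah$, and form the projectivization $P(L_0\oplus\cdots\oplus L_\ell)\to\Bm$ of a sum of holomorphic line bundles over the extremal Hodge manifold $\Bm$. The curvatures $c_1(L_i)$, together with the integrality of $[\omega_\Bm]$, furnish the connection data $\theta$ and the curvature identity $\d\theta=\delta\ah\otimes\omega_\Bm$ of \eqref{curvature}, and the family of positive affine functions $f_\ah$ on the momentum simplex $\Delta$ of $\C P^\ell$ (normalized modulo scaling and the torus action) is genuinely $1$-parameter after accounting for the relations among the $L_i$; applying the first part of the Corollary to each such $\ah$ produces the asserted $1$-parameter family of $(f_\ah,m+2)$-extremal K\"ahler metrics in the various admissible K\"ahler classes.

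I expect the only real subtlety — the main obstacle — to be the bookkeeping needed to invoke Theorem~\ref{thm:main} in the ``reverse'' direction through Proposition~\ref{p:generalized-calabi}: one must check that the Sasaki structures $\sas$ and $\cae_\ah$ on the common CR manifold $\Sm$ over $(M,g,\omega)$ are exactly the ones whose Sasaki--Reeb quotients are, respectively, $(M,g,\omega)$ and the extremal product $\Bm\times V_\ah$ — i.e.\ that the CR $f_\ah$-twist relating them is the \emph{same} twist used in Proposition~\ref{p:generalized-calabi} — and that all of this is compatible with passing to local quotients or, in the Hodge case, to honest $\Sph^1$-orbibundles. This is purely a matter of tracking the affine charts $\mc\leftrightarrow\tilde\mc=\mc/f_\ah(\mc)$ and the weight relation $m+2=(\ell+2)+d$, with no new analysis required once Proposition~\ref{p:generalized-calabi} and Theorem~\ref{thm:main} are in hand.
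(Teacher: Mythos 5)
Your proposal is correct and follows essentially the same route as the paper's proof: apply Proposition~\ref{p:generalized-calabi} with $\bh=\ah$ to identify the CR $f_\ah$-twist of \eqref{eq:gca} with the K\"ahler product of $\Bm$ and $V_\ah$, use Theorem~\ref{thm:main} twice (once on $V$ to conclude $V_\ah$ is extremal, once on $M$ in the reverse direction), and specialize to $V=\C P^\ell$ with the Fubini--Study metric via Corollary~\ref{c:FS} for the second claim. The CR-level bookkeeping you flag as the main subtlety is precisely what Proposition~\ref{p:generalized-calabi} already supplies, so no further verification is needed.
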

\begin{proof} The CR $f_\ah$-twist of the K\"ahler metric \eqref{eq:gca}
defined by Proposition~\ref{p:generalized-calabi} is the K\"ahler product of
$(\Bm, g_\Bm, \omega_\Bm)$ with the CR $f_\ah$-twist $(V_\ah, g_\ah,
\omega_\ah)$ of $(V, g_V, \omega_V)$. As $(V, g_V, \omega_V)$ is $(\ell+2,
f_\ah)$-extremal, $(V_\ah, g_\ah, \omega_\ah)$ is extremal by
Theorem~\ref{thm:main}. It follows that the CR $f_\ah$-twist of \eqref{eq:gca}
is extremal, so by Theorem~\ref{thm:main} again, we conclude that
\eqref{eq:gca} is $(f_\ah, m+2)$-extremal.

In the special case $M = P (L_0 \oplus \cdots \oplus L_\ell) \to \Bm$, we
can apply the above construction with $(V, g_V, \omega_V)=(\C P^\ell,
g_{FS}, \omega_{FS}),$ where $(g_{FS}, \omega_{FS})$ is a Fubini--Study metric
on $\C P^\ell$. By Corollary~\ref{c:FS}, $g_{FS}$ is $(f_\ah,
\ell+2)$-extremal, so the claim follows.
\end{proof}

\subsection{The Calabi ansatz}\label{s:Calabi}

We now specialize to the case that $(V,g_V,\omega_V)$ in the generalized
Calabi ansatz is a toric $2$-manifold or orbifold~\eqref{eq:toric-RS}; this is
the original Calabi ansatz when $V=\C P^1$, and~\eqref{eq:gca} reduces to
\begin{equation}\label{ruled-kahler}
g = (\ah_0+\ah_1\mm_1)g_\Bm + \frac{d\mm_1^2}{\Afn(\mu_1)} + \Afn(\mm_1)\theta^2,
\qquad \omega = (\ah_0+\ah_1\mm_1) \omega_\Bm + d\mm_1 \wedge \theta,
\end{equation}
where $d\theta = \ah_1 \omega_\Bm$. By Proposition~\ref{p:generalized-calabi},
$(M,g,\omega)$ has a CR $f_\ah$-twist $(\tilde M,\tilde g,\tilde \omega)$
given by the the K\"ahler product of $(\Bm,g_\Bm,\omega_\Bm)$ and $(\tilde
V,g_{\tilde V},\omega_{\tilde V})$, where the latter is a CR $f_\ah$-twist of
$(V,g_V,\omega_V)$ as in Example~\ref{ex:toric-RS}. Furthermore, by
Theorem~\ref{thm:main}, for any $\bh\in\torh$, $g$ is $(f_\bh,m+2)$-extremal
if and only if $\tilde g$ is $(\tilde f_\bh,m+2)$-extremal, where
$f_\bh(\mm_1)$ and $\tilde f_\bh(\tilde \mm_1)$ are the Killing potentials
induced by $\bh$ on $M$ and $\tilde M$ respectively.

If $\ah$ and $\bh$ are linearly independent then $\tilde f_\bh$ is nonconstant
and, up to homothety, we may assume that $\tilde
f_\bh(\tilde\mc_1)=\tilde\mc_1+\tilde\bh_0$. Then $\tilde g$ is $(\tilde
f_\bh,m+2)$-extremal iff $g_\Bm$ has constant scalar curvature $s_\Bm$ and
$g_{\tilde V}$ has profile function $\tilde A$ with
\begin{equation}\label{f-extremal-product}
\tilde\Afn(\tilde\mc_1-\tilde\bh_0)
= \ca_0\tilde\mc_1^{m+2} + \ca_1\tilde\mc_1^{m+1} - s_\Bm\tilde\mc_1^m
+ \ca_3\tilde\mc_1 + \ca_4.
\end{equation}
If $(\Bm, g_\Bm, \omega_\Bm)$ is a CSC Hodge K\"ahler manifold (where we may
assume without loss that $[\omega_\Bm/2\pi]$ is primitive) then this picture
globalizes in a couple of ways as follows.

\smallbreak

First, we may start from a weighted projective line $\tilde V=\C P^1_\wv$,
where $\wv=(w_-, w_+)$ is a pair of positive integers.  We equip $\C P^1_\wv$
with the toric symplectic structure $\omega_\wv$ induced by the quasi-regular
Sasaki structure $(\Sph^3, \Ds, J, \sas_\wv)$ on the $3$-sphere $\Sph^3 \sub
\C^2$.  By \eqref{eq:fund-seq}, the rational Delzant
polytope~\cite{Delzant,LT} of $(\C P^1_{\wv}, \omega_\wv)$ in the affine chart
defined by $\sas_\wv$ is given by $\{(\mc_0, \mc_1) : \mc_i \geq 0,\; w_-\mc_0
+ w_+\mc_1 =1 \}$, but we use instead the parametrization $\mc_0=
(1+\tilde\mc_1)/(2w_+)$, $\mc_1 = (1-\tilde\mc_1)/(2w_-)$ to realize this
rational Delzant polytope as the interval $[-1, 1]$ with inward normals
$1/(2w_+)$ and $-1/(2w_-)$. As explained in \cite{BG-book}, for any positive
integer $k$, the product K\"ahler orbifold $(\Bm, g_\Bm, \omega_\Bm) \times
(\C P^1_\wv, k g_\wv, k \omega_\wv)$ gives rise to a compact quasi-regular
extremal Sasaki orbifold $(\Sm_{\wv,k}, \Ds, J, \sas_{\wv,k})$, which is the
Sasaki join of the regular Sasaki manifold $(\Sm_\Bm, \Ds_\Bm, J_\Bm,
\sas_\Bm)$ associated to $(\Bm, g_\Bm, \omega_\Bm)$ and $(\Sph^3, \Ds, J,
\frac{1}{k}\sas_\wv)$. There are well-understood conditions in terms of the
integers $(w_+, w_-,k)$ ensuring that $\Sm_{\wv,k}$ is a smooth manifold, see
\cite{BG-book}. Now any $\tilde\Afn$ given by \eqref{f-extremal-product} with
$|\tilde\bh_0|>1$, which satisfies the well-known positivity and boundary
conditions
\begin{equation}\label{compactification}
  \tilde\Afn(\tilde\mc_1)>0 \quad \mathrm{on} \quad (-1, 1), \quad
  \tilde\Afn(\pm 1)=0 \quad \textrm{and}\quad \tilde\Afn'(\pm 1)= \mp 4
  w_{\mp}/k,
\end{equation}
gives rise to a toric, $k\omega_\wv$-compatible K\"ahler metric $\tilde g_\wv$
on $\C P^1_\wv$, such that the product metric $g_\Bm + \tilde g_\wv$ is
$(\tilde f_\bh, m+2)$-extremal.  We thus get a new Sasaki structure
$(\Sm_{\wv,k}, \Ds, J_\wv, \cae_\bh)$ which is extremal by
Theorem~\ref{thm:main}. Note that $\cae_\bh$ is not quasi-regular if
$\tilde\bh_0$ is irrational.

For a fixed $\ah_0$, the endpoint conditions \eqref{compactification}
determine the unknown coefficients $\ca_0$, $\ca_1$, $\ca_3$ and $\ca_4$ of a
polynomial $\tilde\Afn$ satisfying \eqref{f-extremal-product}, and it remains
to examine the positivity condition for $\tilde\Afn$.  This is therefore an
effective tool for generating compact examples of extremal Sasaki metrics,
providing an explanatory framework for the constructions in \cite{BT0, BT}.

\smallbreak

Secondly, we may begin instead with $M=\Proj(\cO\oplus\cL)$ where $\cL$ is a
holomorphic line bundle over $\Bm$ such that $c_1(\cL)=\ell[\omega_\Bm/2\pi]$
for $\ell\in\Z^+$ (and $\cO$ denotes the trivial line bundle).
Then~\eqref{ruled-kahler} defines a K\"ahler metric on $M$ such that the
$\Sph^1$-action induced by scalar multiplication in $\cO$ is isometric and
hamiltonian with momentum map $\mm_1$ and momentum image $\mm_1(M)=[-1, 1]
\subset \R$ if and only if $\Afn(\mc_1)$ is a smooth function on $[-1,1]$
satisfying the boundary conditions
\begin{equation}\label{boundary}
\Afn(\pm 1)=0, \quad\Afn'(\pm 1)= \mp 2,
\end{equation}
and the positivity condition
\begin{equation}\label{positive}
\Afn(\mc_1)>0 \quad \textrm{on} \quad (-1,1),
\end{equation}
and $\ah_0+\ah_1\mc_1$ is positive on $[-1,1]$ with $|\ah_1|=\ell$ (and we may
assume $\ah_1=\ell$ by replacing $\mc_1$ with $-\mc_1$ if necessary). Here
$\theta$ is the connection form associated to a principal $\Sph^1$-connection
on the unit circle bundle in $M\to\Bm$ and
\begin{equation}\label{kahler-class}
[\omega/2\pi] = c_1(\cO_{\cO\oplus\cL}(2) )+ (\ah_0+\ah_1)c_1(\pi^* \cL).
\end{equation}
For any positive integers $\p, \q$ such that $\q/\p > \ell$, $L_{\p,\q} :=
\cO_{\cO \oplus \cL}(\p) \otimes \pi^* \cL^{\q/\ell}$ is a polarization on
$M$, $c_1(L_{\p,\q})$ being homothetic to a K\"ahler class of the form
\eqref{kahler-class} with $\ah_0= (2\q/\p) -\ell$ and $\ah_1=\ell$. We thus
let $(\Sm_{\p,\q}, \Ds, J, \sas)$ be the smooth Sasaki manifold
corresponding to the K\"ahler manifold $(M, \frac{\p}{2} g,
\frac{\p}{2}\omega)$ via Example~\ref{e:regular}, where $(g, \omega)$ is given
by \eqref{ruled-kahler} (with $\ah_0= (2\q/\p) -\ell$ and $\ah_1=\ell$). Up to
a covering, $(\Sm_{\p,\q}, \Ds, \sas)$ is determined by the ratio $\q/\p$, so
we assume henceforth that $\p$ and $\q$ are coprime positive integers.  In
\cite[(37)]{BT}, the contact manifold $(\Sm_{\p,\q}, \Ds)$ is identified with
the Sasaki join $(\Sm_{\wv,\p}, \Ds)$ constructed over $B\times \C P^1_\wv$
above, with weights $w_+ = \q, w_- = \q-\p\ell$. The theory of CR twists
further identifies the CR structure $J$ on $(\Sm_{\p,\q}, \Ds)$ induced
by~\eqref{ruled-kahler} with the CR structure $J_\wv$ on $(\Sm_{\wv,\p}, \Ds)$
induced by
\begin{align*}
\tilde g &= g_\Bm + \frac{\d\tilde\mm_1^2}{\tilde\Afn(\tilde\mm_1)}
+\tilde\Afn(\tilde\mm_1)\d t^2,
\qquad\tilde\omega = \omega_\Bm+  \d\tilde\mm_1 \wedge \d t, \\
\tag*{where}
\tilde\Afn(\tilde\mc_1)&=
\frac{(\ah_0^2-\ah_1^2)^2\Afn(\mc_1)}{(\ah_0+\ah_1\mc_1)^3},
\quad \mc_1 =\frac{\ah_0\tilde\mc_1-\ah_1}{\ah_0-\ah_1\tilde\mc_1},\quad
\ah_0=\frac{2\q}{\p} -\ell \quad
\text{and} \quad \ah_1=\ell.
\end{align*}

\section{Separable toric geometries}\label{s:ansatz}

\subsection{Regular ambitoric structures}

In~\cite{ambitoric1,ambitoric2}, the following $4$-dimensional geometric
structure was studied.
\begin{defn} An \emph{ambik\"ahler structure} on a real $4$-manifold or
orbifold $M$ consists of a pair of K\"ahler metrics $(g_-, J_-, \omega_-)$ and
$(g_+, J_+, \omega_+)$ such that
\begin{bulletlist}
\item $g_-$ and $g_+$ are conformally equivalent;
\item $J_-$ and $J_+$ have opposite orientations.
\end{bulletlist}
The structure is said to be \emph{ambitoric} if in addition there is a
$2$-dimensional subspace $\tor$ of vector fields on $M$, linearly independent
on a dense open set, whose elements are hamiltonian and Poisson-commuting
Killing vector fields with respect to both $(g_-,\omega_-)$ and
$(g_+,\omega_+)$---i.e., both K\"ahler structures are locally toric.
\end{defn}
It was shown in~\cite{ambitoric1} that any ambitoric structure is locally
either a product, of Calabi type, or a \emph{regular} ambitoric structure
given by the following ansatz. Let $q(x)=q_0+2q_1 x+ q_2x^2$ be a quadratic
polynomial and let $M$ be a $4$-manifold or orbifold with real-valued
functions $(x_1,x_2,\tau_0,\tau_1,\tau_2)$ such that $x_1>x_2$,
$2q_1\tau_1=q_0\tau_2+q_2\tau_0$, and their exterior derivatives span each
cotangent space. Let $\tor$ be the $2$-dimensional space of vector fields $K$
on $M$ with $\d x_1(K)=0=\d x_2(K)$ and $\d\tau_j(K)$ constant, and let
$A(x)$ and $B(x)$ be positive functions on open neighbourhoods of the
images of $x_1$ and $x_2$ in $\R$, on whose product
$f_q(x_1,x_2):=q_0+q_1(x_1+x_2)+q_2x_1x_2$ is positive.  Then $M$ is ambitoric
with
\begin{equation}\label{eq:ambitoric}
\begin{split}
g_\pm &= \biggl(\frac{x_1-x_2}{f_q(x_1,x_2)}\biggr)^{\!\!\pm1}
\biggl(\frac{\d x_1^2}{A(x_1)} + \frac{\d x_2^2}{B(x_2)}+A(x_1)
\alpha_1^{\,2}+ B(x_2)\alpha_2^{\,2}\biggr),\\
\omega_\pm &= \biggl(\frac{x_1-x_2}{f_q(x_1,x_2)}\biggr)^{\!\!\pm 1}
(\d x_1\wedge \alpha_1\pm \d x_2 \wedge \alpha_2),\qquad
\begin{aligned}
J_\pm \d x_1 &= A(x_1) \alpha_1,\\
J_\pm \d x_2 &= \pm B(x_2)\alpha_2,
\end{aligned}\\
\alpha_1 &=\frac{\d\tau_0 + 2x_2 \d\tau_1 + x_2^2 \d\tau_2}
{(x_1-x_2)f_q(x_1,x_2)},\qquad
\alpha_2 =\frac{\d\tau_0 + 2x_1 \d\tau_1 + x_1^2 \d\tau_2}
{(x_1-x_2)f_q(x_1,x_2)}
\end{split}
\end{equation}
There is a gauge freedom to make a simultaneous projective transformation of
the coordinates $x_1$, $x_2$, with $q$ transforming as a quadratic polynomial,
and $A,B$ as quartics~\cite{ambitoric1}. If $q$ has repeated roots,
we may use this freedom to set $q=1$, and then $g_+$ is a $2$-dimensional
\emph{orthotoric metric}, as studied in~\cite{wsdk,HFKG1}. We then refer to
$g_-$ as a \emph{negative orthotoric metric}.

Ambitoric structures are examples of \emph{separable toric geometries}, i.e.,
they admit \emph{separable coordinates} $x_1,\ldots x_m$ in which the metric
is determined by $m$ functions of $1$ variable (and some explicit data, such
as $q$ here). We now explore CR twists for some separable toric geometries.
While we could simply apply the general approach given in
Lemma~\ref{l:toric-twist}, this is not expedient for a couple of reasons. On a
practical level, we would need to compute: the transformation from separable
coordinates to momenta, the symplectic potential, its CR twist in terms of the
new momenta, and finally the transformation from these momenta back to
separable coordinates. This is rather involved, and unnecessarily so, because
whereas a CR twist involves a change of momentum coordinates due to the change
of affine chart, the separable coordinates remain fixed. We illustrate this
first in the simplest separable situation: K\"ahler products of toric Riemann
surfaces.

\subsection{The CR twisted toric product ansatz}\label{s:twist-product}

A K\"ahler product of toric Riemann surfaces has a K\"ahler metric of the form:
\begin{equation}\label{eq:tor-prod}
g = \sum_{i=1}^m\Bigl(\frac{{\d x_i}^2}{\Afn_i(x_i)} + \Afn_i(x_i){\d t_i}^2\Bigr),
\quad\omega=\sum_{i=1}^m \d x_i\wedge \d t_i, \quad J\d x_i = \Afn_i(x_i)\d t_i,
\end{equation}
where $\Afn_1,\ldots \Afn_m$ are arbitrary functions of $1$ variable. In this case
the separable coordinates and momenta coincide: the toric Killing potentials
have the form
\[
f_\bh(x_1,\ldots x_m) = \bh_0+\bh_1 x_1+\cdots +\bh_m x_m.
\]
It is straightforward to compute the CR structure associated to $(g,\omega,J)$
as in Example~\ref{e:regular}. Denoting by $t_i, x_i$ also their pullbacks to
$\Sm$, we have $\Ds=\ker\cf$ and $J\colon\Ds^*\to\Ds^*$ given by
\begin{equation*}
\cf = \d t_0+\sum_{i=1}^m x_i \d t_i, \qquad J\bigl(\d x_i\restr\Ds\bigr)
= \Afn_i(x_i)\d t_i\restr\Ds.
\end{equation*}
We now lift $f_\bh(x_1,\ldots x_m)$ to a new Sasaki structure $\cae_\bh$ on
$\Sm$ and compute the new Sasaki--Reeb quotient. The new contact form is
$\cf_\bh:=\cf_{\cae_\bh}=\cf/f_\bh$, with
\[
\d\cf_\bh= \sum_{i=1}^m\d x_i\wedge\frac{\partial \cf_\bh}{\partial x_i}
= \frac1{f_\bh(x_1,\ldots x_m)} \sum_{i=1}^m \d x_i\wedge (\d t_i -\bh_i\cf_\bh).
\]
Since $J\bigl(\d x_i\restr\Ds\bigr) = \Afn_i(x_i)(\d t_i
-\bh_i\cf_\bh)\restr\Ds$, the Sasaki--Reeb quotient is given by the following
toric ansatz, originally proposed in~\cite{ACGL}, which we refer to here as a
\emph{twisted toric product}:
\begin{equation}\label{eq:tw-tor-prod}
\begin{gathered}
g_\bh = \frac{1}{f_\bh(x_1,\ldots x_m)}
\sum_{i=1}^m\Bigl(\frac{{\d x_i}^2}{\Afn_i(x_i)} + \Afn_i(x_i){\alpha_i}^2\Bigr),
\qquad \omega_\bh = \frac{1}{f_\bh(x_1,\ldots x_m)} \sum_{i=1}^m
\d x_i \wedge \alpha_i,\\
J_\bh\d x_i =\alpha_i, \qquad\d\alpha_i= - \bh_i \omega, \qquad
f_\bh(x_1,\ldots x_m) = \bh_0+\bh_1x_1 + \cdots + \bh_mx_m.
\end{gathered}
\end{equation}
For $\bh_0\neq0$, we may obtain more explicit angle coordinates by setting
$\tau_i= t_i-\bh_it_0/\bh_0$ and $\alpha_i= \d\tau_i - (\bh_i/f_\bh)
\sum_{j=1}^m x_j \d\tau_j$. We may take as momenta
\[
\mm_0 = \frac1{f_\bh(x_1,\ldots x_m)},\qquad
\mm_j = \frac{x_j}{f_\bh(x_1,\ldots x_m)}, \qquad j\in\{1,\ldots m\}.
\]
Hence the momentum coordinates and separable coordinates no longer agree.  The
original product metric~\eqref{eq:tor-prod} is a CR twist
of~\eqref{eq:tw-tor-prod} by $\mm_0=f_\bh^{-1}=1/f_\bh$. It was shown in
\cite{ACGL} that when $m=2$, this construction unifies the ambitoric product,
Calabi and negative orthotoric ansatz of \cite{ambitoric1} in a single family.

It was also shown in \cite{ACGL} that a twisted toric product
metric~\eqref{eq:tw-tor-prod} is $(f_\bh^{-1},m+2)$-extremal if and only if
$\Afn_j$ is a cubic polynomial for all $j\in\{1,\ldots m\}$. We see here that
this follows straightforwardly from Theorem~\ref{thm:main},
as~\eqref{eq:tw-tor-prod} is $(f_\bh^{-1},m+2)$-extremal if and only
if~\eqref{eq:tor-prod} is extremal, and a toric K\"ahler product is extremal
if and only if the factors are, meaning that each $\Afn_i$ is a cubic. In this
case, we may also identify the CR manifold which has these metrics as its
Sasaki--Reeb quotients. Indeed, straightforward computation shows that the
Cartan tensor of a Sasaki $3$-manifold vanishes precisely when the
(transversal, i.e., Tanaka--Webster) scalar curvature is transversally
holomorphic (see e.g.~\cite{herzlich}). It then follows from \cite{Car} that
the $(f_\bh^{-1}, m+2)$-extremal metrics given by \eqref{eq:tw-tor-prod} are
obtained as Sasaki--Reeb quotients with respect to the CR structure of a
(local) Sasaki join~\cite{BG-book} of $m$ copies of the standard CR structure
$(\Ds_0, J_0)$ on the $3$-sphere $\Sph^3 \sub \C^2$, with respect to a (local)
Sasaki structure on each factor.

We next consider the extremality condition for the K\"ahler metrics
\eqref{eq:tw-tor-prod}, using again Theorem~\ref{thm:main} to infer that
\eqref{eq:tw-tor-prod} is extremal if and only if the product
metric~\eqref{eq:tor-prod} is $(f_\bh,m+2)$-extremal. We thus have
\begin{multline}\label{extremal-tw-prod}
\sum_{j=1}^m \biggl( -f_\bh^2 \Afn_j''(x_j) + 2(m+1)
f_\bh\frac{\partial f_\bh}{\partial x_j} \Afn_j'(x_j)-(m+1)(m+2)
\biggl(\frac{\partial f_\bh}{\partial x_j}\biggr)^2 \Afn_j(x_j)\biggr)\\
=-\sum_{j=1}^m f_\bh^{m+3}\frac{\partial^2}{\partial
  x_j^2}\biggl(\frac{\Afn_j(x_j)}{f_\bh^{m+1}}\biggr)
= \Scal_{f_\bh,m+2}(g) = c_0 + c_1x_1 + \cdots + c_mx_m,
\end{multline}
where $c_0, c_1, \ldots c_m$ are some real constants. For $m=1$ we get that
$\Afn_1$ must be a polynomial of degree $\leq 3$ and for $m=2$
\eqref{eq:tw-tor-prod} is given by the ambitoric product, Calabi or negative
orthotoric ansatz of \cite{ambitoric1}, and the extremality condition
\eqref{extremal-tw-prod} can be solved~\cite{ambitoric1} in terms of two
polynomials $\Afn_1$ and $\Afn_2$ of degree $\leq 4$. We thus assume from now on
that $m\geq 3$.

\begin{prop} For $m\geq 3$ the K\"ahler metric \eqref{eq:tw-tor-prod} is
extremal if and only if it is a product of extremal Riemann surfaces, or is
given by the Calabi ansatz over a product of $m-1$ CSC Riemann surfaces, or is
the K\"ahler product of a scalar-flat product of Riemann surfaces with a
product of flat Riemann surfaces, as in Proposition~\textup{\ref{flat-local}}.
\end{prop}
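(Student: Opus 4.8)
The plan is to reduce everything to the identity \eqref{extremal-tw-prod}, which, as already observed via Theorem~\ref{thm:main}, is equivalent to extremality of \eqref{eq:tw-tor-prod}. Abbreviate its left--hand side as $\sum_{j=1}^m S_j$ where $S_j := -f_\bh^2\Afn_j'' + 2(m+1)\bh_j f_\bh\Afn_j' - (m+1)(m+2)\bh_j^2\Afn_j$, $f_\bh = \bh_0 + \sum_k\bh_k x_k$, and derivatives of $\Afn_j$ are in $x_j$; so extremality amounts to $\sum_j S_j = c_0 + \sum_i c_i x_i$ for some constants $c_i$. The ``if'' implication is routine given what is already established: when $f_\bh$ is constant, \eqref{eq:tw-tor-prod} is a rescaled K\"ahler product, which is extremal iff each $\Afn_j$ is a cubic; the Calabi--ansatz family is exactly the one treated in \S\ref{s:Calabi}; and the family of Proposition~\ref{flat-local} is $(f_\bh,m+2)$-extremal as a product, hence its CR $f_\bh$-twist \eqref{eq:tw-tor-prod} is extremal by Theorem~\ref{thm:main}. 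The substance is the ``only if'' direction.

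\emph{Step 1: each $\Afn_j$ is a polynomial.} Put $I = \{j : \bh_j\neq0\}$. Fix $j$ and freeze the other variables at generic values. If $j\in I$ then, viewing \eqref{extremal-tw-prod} as an equation in $x_j$, the $\Afn_i$ with $i\neq j$ enter only through $f_\bh = \bh_j x_j + r$ with $r$ a non-zero constant, so $\sum_{i\neq j}S_i$ is a quadratic polynomial in $x_j$ and $\Afn_j$ solves the linear ODE $-f_\bh^2\Afn_j'' + 2(m+1)\bh_j f_\bh\Afn_j' - (m+1)(m+2)\bh_j^2\Afn_j = Q_j(x_j)$ with $\deg Q_j\le2$. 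A direct substitution shows $f_\bh^{m+1}$ and $f_\bh^{m+2}$ span the solutions of the homogeneous equation and that the operator sends $x_j^d$ to a polynomial with leading term $-\bh_j^2(d-m-1)(d-m-2)x_j^d$, which is non-zero for $d\le2$ since $m\ge3$; hence there is a polynomial particular solution of degree $\le2$, and $\Afn_j$ is a polynomial in $x_j$ of degree $\le m+2$. If $j\notin I$, then $x_j$ enters \eqref{extremal-tw-prod} only through $-f_\bh^2\Afn_j''(x_j)$ with $f_\bh$ free of $x_j$; comparing with the affine right--hand side (and ruling out a surviving $x_k^2 x_j$ monomial) forces $\Afn_j''$ to be constant, so $\Afn_j$ is a quadratic. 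Thus \eqref{extremal-tw-prod} becomes a polynomial identity in $x_1,\dots,x_m$.

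\emph{Step 2: the trichotomy.} If $I=\emptyset$ then $f_\bh$ is constant, $\sum_j\Afn_j''$ is affine, each $\Afn_j$ is a cubic, and \eqref{eq:tw-tor-prod} is a product of extremal Riemann surfaces. If $I=\{j_0\}$ then $f_\bh$ depends on $x_{j_0}$ only, so $\sum_{j\neq j_0}S_j = -f_\bh^2\sum_{j\neq j_0}\Afn_j''$ and the whole left--hand side depends on $x_{j_0}$ only; this forces $\Afn_j''$ to be constant for $j\neq j_0$ (the remaining surfaces CSC) and $\Afn_{j_0}$ to satisfy the Calabi--ansatz extremality ODE \eqref{f-extremal-product} over their product, so \eqref{eq:tw-tor-prod} is of Calabi type over a product of $m-1$ CSC Riemann surfaces. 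Finally, if $|I|\ge2$, then for $j\in I$ the homogeneous solutions $f_\bh^{m+1},f_\bh^{m+2}$ of Step~1 genuinely involve the other variables, whereas $\Afn_j$ is a function of $x_j$ alone, so they cannot occur in $\Afn_j$; hence every $\Afn_j$ is a quadratic, say $\Afn_j = \tfrac{k_j}{2}x_j^2 + (\text{linear in }x_j)$. Substituting into the polynomial identity and comparing the coefficients of $x_j^2$ (for $j\in I$) and of $x_j x_l$ (for distinct $j,l\in I$) yields $-\bh_j^2 K + \tfrac12\bh_j^2 k_j(m+1)(2-m) = 0$ and $-K + (m+1)(k_j+k_l)=0$, where $K:=\sum_i k_i$; the first gives $k_j = 2K/((m+1)(2-m))$ for all $j\in I$, and substituting into the second yields $(m+2)K=0$, so $K=0$ and $k_j=0$ for all $j\in I$ (here $m\ge3$ is essential). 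Consequently the twisted factors are flat, the factors outside $I$ form a scalar-flat product (their curvatures summing to $-K=0$), and $f_\bh$ is a positive Killing potential on the flat part: precisely the configuration of Proposition~\ref{flat-local}.

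I expect the main obstacle to be Step~2 in the case $|I|\ge2$: the coefficient bookkeeping for the polynomial identity, and especially the observation that the two linear relations among the $k_j$ are incompatible unless all twisted $k_j$ vanish---this fails exactly at $m=2$, where the same relations instead permit the negative-orthotoric solutions of \cite{ambitoric1}. A secondary point requiring care is to reconcile the ODE for the single twisted profile in the case $|I|=1$ with the normal form \eqref{f-extremal-product}, and to verify that the three families really exhaust the solutions of \eqref{extremal-tw-prod} (the ``if'' direction), i.e.\ that no further constraints on the lower-order coefficients arise.
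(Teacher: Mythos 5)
Your argument is correct and reaches the same trichotomy by the same overall strategy as the paper: reduce to \eqref{extremal-tw-prod}, prove each $\Afn_j$ is a polynomial, bound degrees according to which $\bh_j$ vanish, and compare coefficients in the three cases $|I|=0$, $|I|=1$, $|I|\geq 2$. The one genuinely different ingredient is how you get polynomiality and the degree bounds: the paper differentiates \eqref{extremal-tw-prod} $(m+1)$ times in $x_j$ to get $f_\bh^2\Afn_j^{(m+3)}=0$ and then extracts mixed-monomial coefficients, whereas you exploit the second-order ODE structure in $x_j$ with frozen transverse variables, identify the homogeneous solutions $f_\bh^{m+1},f_\bh^{m+2}$, and rule them out because they depend on the frozen variables. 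Your route is arguably more robust (the paper's two-derivative identity as displayed silently drops the term $-2\bh_j^2\sum_{i\neq j}\Afn_i''$, which your slicing argument avoids), and it makes visible where $m\geq 3$ enters, namely that the particular solution has degree $\leq 2<m$ so the coefficients of $x_j^m,x_j^{m+1},x_j^{m+2}$ in $\Afn_j$ come only from the homogeneous part; your final linear relations on the $k_j$ agree with the paper's conclusion that the twisted profiles are in fact linear. Two small points to tighten: in Step~1 the conclusion ``$\Afn_j$ is a quadratic for $j\notin I$'' presupposes $I\neq\emptyset$ (for $I=\emptyset$ one only gets cubics, as you correctly use in Step~2); and the phrase ``they cannot occur in $\Afn_j$'' should be expanded into the one-line check that constancy in $\hat x$ of the coefficients of $x_j^{m+2}$, $x_j^{m+1}$ and $x_j^m$, with $r(\hat x)$ non-constant when $|I|\geq 2$, forces both homogeneous coefficients to vanish. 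Neither is a gap.
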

\begin{proof}  Differentiating \eqref{extremal-tw-prod} $(m+1)$ times with
respect to $x_j$ yields
\begin{equation*}
f_\bh^{2} \Afn_j^{(m+3)}(x_j) =0,
\end{equation*}
showing that each $\Afn_j$ must be a polynomial of degree $\leq m+2$. Thus,
both sides of \eqref{extremal-tw-prod} are polynomials in $x_i$, so we may
compare coefficients. Taking two derivatives in $x_j$ gives 
\begin{equation*}
0 =  f_\bh^{m} \frac{\partial^2}{\partial x_j^2}
\biggl(\frac{\Afn_j^{(2)}(x_j)}{f_\bh^{m-1}}\biggr) 
=f_\bh^2 \Afn_j^{(4)}(x_j)-2(m-1)\bh_j f_\bh \Afn_j^{(3)}(x_j)
+m(m-1) \bh_j^2\Afn_j^{(2)}(x_j).
\end{equation*}
If $\bh_i \neq 0$ for some $i\neq j$, the vanishing of the polynomial
coefficients containing $x_i^2$ in the above relation show that $\Afn_j$ has
degree $\leq 3$; if furthermore $\bh_j \neq 0$, then the coefficients
containing $x_i$ show that $\Afn_j$ has degree $\leq 2$. Substituting back in
\eqref{extremal-tw-prod} and comparing coefficients, this yields the following
three possibilities for the solutions to \eqref{extremal-tw-prod} with $m\geq
3$:
\begin{bulletlist}
\item $f_\bh(x_1,\ldots x_m)=\bh_0$. Then the $\Afn_j$ are polynomials of degree
  $\leq 3$ and the corresponding extremal metric \eqref{eq:tw-tor-prod} is a
  product of extremal Riemann surfaces;
\item $f_\bh(x_1,\ldots x_m)= \bh_0 + \bh_j x_j$ with $\bh_j\neq 0$. Then
  \eqref{eq:tw-tor-prod} is given by the Calabi ansatz over the product of
  $(m-1)$ Riemann surfaces indexed by $i:i\neq j$. In particular, for each
  $i:i\neq j$, $\Afn_i$ is a polynomial of degree $\leq 2$ whereas $\Afn_j$ is
  a polynomial of degree $\leq m+2$, as described by
  \eqref{f-extremal-product}.
\item there are $0\neq j_1\neq j_j\neq 0$ with $\bh_{j_1}\neq 0\neq
  \bh_{j_2}$. Then for each $j$ with $\bh_j\neq 0$, $\Afn_j$ is a polynomial of
  degree $\leq 1$, and for each $i$ with $\bh_i=0$, $\Afn_i$ is a polynomial of
  degree $\leq 2$ with $\sum_{i:\bh_i=0} A''_i =0$. Thus, in this case,
  $g$ is the product metric of a scalar-flat product of (CSC) Riemann surfaces
  (indexed by $\{i: \bh_i=0\}$) with a product of flat Riemann surfaces
  (indexed by $\{j: \bh_j\neq 0\}$). \qedhere
\end{bulletlist}
\end{proof}
 
\subsection{CR twists of positive regular ambitoric structures}

We return now to regular ambitoric structures~\eqref{eq:ambitoric}, for which
it was shown in~\cite{ambitoric1} that $g_+$ is extremal if and only if $g_-$
is extremal if and only if
\begin{equation}\label{eq:ambi-extremal}
A=pq+P,\qquad B=pq-P,\\
\end{equation}
where $p$ is a quadratic polynomial orthogonal to $q$, and $P$ is polynomial
of degree $\leq 4$. Note that the orthogonality condition $\ip{p,q} :=p_0q_2 -p_1q_1 + p_2q_0=0$ means
that the roots of $p$ and the roots of $q$ harmonically separate each other.

When $q$ has distinct roots, the positive and negative structures are
equivalent, cf.~\cite[Remark~5]{ambitoric2}, while in the case of repeated
roots the negative orthotoric structures are twisted toric
products~\cite{ACGL}, as noted above. Hence we only need to consider the positive
ambitoric metrics. The CR structure associated to $(g_+,J_+,\omega_+)$
in~\eqref{eq:ambitoric} was computed in~\cite[App.~C]{ambitoric2}, which
implies that $\Ds=\ker\cf$ and $J\colon\Ds^*\to\Ds^*$ are given by
\begin{equation}\label{eq:ambi-CR}
  \cf= \frac{\d t_0 + (x_1+x_2) \d t_1 + x_1 x_2 \,\d t_2}{x_1-x_2},\qquad
\begin{aligned}
J(\d x_1\restr\Ds) &= A(x_1)
\frac{\d t_0 + 2x_2 \d t_1 + x_2^2 \d t_2}{(x_1-x_2)^2}\Big|_\Ds,\\
J(\d x_2\restr\Ds) &= B(x_2)
\frac{\d t_0 + 2x_1 \d t_1 + x_1^2 \d t_2}{(x_1-x_2)^2}\Big|_\Ds,
\end{aligned}
\end{equation}
independently of $q$, while the toric Killing potentials of
$(g_+,J_+,\omega_+)$ have the form $f_w/f_q$, where $w$ is quadratic
polynomial, and lift to Sasaki structures
$\cae_w:=f_w(x_1,x_2)\sas/(x_1-x_2)$, where $\sas\in\con_+(\Sm,\Ds)$ with
$\cf=\sas^{-1}\cf_\Ds$.

The CR structures arising from an extremal positive ambitoric metric thus have
$A$ and $B$ of degree $\leq 4$, such that the roots of
$A+B$ have harmonic cross-ratio (i.e., in $\{-1,1/2,2\}$). We may
then write
\begin{equation}\label{eq:new-ext}
A=p_1p_2+P, \quad B=p_1p_2-P,\quad\text{with}\quad
\deg P\leq 4, \quad \deg p_j\leq 2,\quad \ip{p_1,p_2}=0.
\end{equation}
Here we have renamed the quadratics compared to~\eqref{eq:ambi-extremal} so
that we are free to use $q$ to define an arbitrary Sasaki--Reeb quotient
of~\eqref{eq:ambi-CR}. Indeed for any quadratic $q$, the Sasaki structure
$\cae_q$ is $(f_{p_j}\sas,4)$-extremal for $j\in\{1,2\}$ by
Theorem~\ref{thm:main} and is extremal if $q=p_1$ or $q=p_2$. We obtain in
particular a result of~\cite{AM}, as any Sasaki--Reeb quotient by $\cae_q$,
given explicitly by~\eqref{eq:ambitoric}, subject to~\eqref{eq:new-ext}, is
$(f_{p_j}/f_q,4)$-extremal for $j\in\{1,2\}$, i.e., the scalar curvature of
$\tilde g_j=(f_q/f_{p_j})^2 g_+$ is a Killing potential of $g_+$; in fact one
can compute~\cite{ambitoric1,AM}
\begin{equation}
\Scal(\tilde g_j) = -\frac{f_w}{f_q}\quad
\text{with}\quad w:=\{p_j, (p_j, P)^{(2)}\},
\end{equation}
where the Poisson bracket is given by $\{p,r\}:= p'\,r - p\, r'$
and
\[
(p, P)^{(2)}:=p\,P''-3p'\,P+6p''\,P
\]
is a transvectant of $p$ and $P$. Special choices of $q$ give special metrics
in this family of Sasaki--Reeb quotients~\cite{ambitoric1,AM}.
\begin{bulletlist}
\item If $q=p_j$, then $\tilde g_j=g_+$, recovering the case that $g_+$ is
  extremal.
\item If $\ip{q,p_j}=0$, then $g_-$ is also $(f_{p_j}/f_q,4)$-extremal, and
  $\tilde g_j$ has diagonal Ricci tensor; if in addition $\ip{(p_j,
  P)^{(2)},q}=0$ then $\{p_j, (p_j, P)^{(2)}\}$ is a multiple of $q$; hence
  $\tilde g_j$ is CSC, so $g_+$ is conformally Einstein--Maxwell (in fact to a
  riemannian Pleba\'nski--Demia\'nski metric~\cite{ambitoric1,DKM,PD}).
\item Combining these observations, if say $q=p_1$, then $\ip{q,p_2}=0$ so
  $g_+=\tilde g_1$ is extremal, while $\tilde g_2$ has diagonal Ricci tensor;
  if in addition $\ip{(p_2, P)^{(2)},q}=0$ then $\tilde g_2$ is Einstein.
\item Finally, taking $q=1$, we obtain an orthotoric metric in the family.
\end{bulletlist}

\begin{cor}\label{ambitoric-vs-orthotoric} Any regular positive ambitoric
K\"ahler metric $(g_+, \omega_+, J_+)$ given by \eqref{eq:ambitoric} can be
obtained as a $f_q$-twist of an orthotoric metric.
\end{cor}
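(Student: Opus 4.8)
The plan is to read the corollary off directly from the description \eqref{eq:ambi-CR} of the CR structure of a regular positive ambitoric metric, whose essential feature is that it does \emph{not} involve the quadratic $q$: for fixed profile functions $A,B$ a single strictly pseudo-convex CR $5$-manifold $(\Sm,\Ds,J)$ underlies all the metrics \eqref{eq:ambitoric} sharing those profile functions, the quadratic $q$ entering only through the choice of Sasaki structure used to pass to a Sasaki--Reeb quotient. So I would first recall, from the discussion preceding the corollary, that with $\sas\in\con_+(\Sm,\Ds)$ the section satisfying $\cf=\sas^{-1}\cf_\Ds$ for the $\cf$ of \eqref{eq:ambi-CR}, the regular positive ambitoric metric with quadratic $q$ is the Sasaki--Reeb quotient of $(\Sm,\Ds,J)$ by $\cae_q:=f_q(x_1,x_2)\,\sas/(x_1-x_2)$; and $\cae_q\in\crJ_+(\Sm,\Ds,J)$ because $x_1>x_2$ and $f_q>0$ on the relevant range, by the standing hypotheses of the ansatz.

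Next I would specialise to the constant quadratic $q\equiv 1$, for which $f_q\equiv 1$, so the associated Sasaki structure is $\cae_1:=\sas/(x_1-x_2)\in\crJ_+(\Sm,\Ds,J)$ and, by definition, its Sasaki--Reeb quotient is the metric obtained by putting $q=1$ in \eqref{eq:ambitoric} --- an orthotoric metric $(g_0,J_0,\omega_0)$ with the same $A,B$. Since the separable coordinates $x_1,x_2$ are pulled back from both Sasaki--Reeb quotients, we have $\cae_q=f_q(x_1,x_2)\,\cae_1$; moreover $\cae_q$ and $\cae_1$ lie in the common maximal abelian subalgebra $\torh\leq\con(\Sm,\Ds)$, hence commute, so $\cae_q\in\crJ^{\cae_1}$ and Lemma~\ref{l:potential} shows that the positive function $f_q(x_1,x_2)$ descends to a positive transversal Killing potential on $(g_0,J_0,\omega_0)$. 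By Definition~\ref{CR f-twist}, the Sasaki--Reeb quotient by $\cae_q=f_q(x_1,x_2)\,\cae_1$, namely $(g_+,J_+,\omega_+)$, is exactly the CR $f_q$-twist of $(g_0,J_0,\omega_0)$, which is the assertion (locally, or globally where the integrality hypotheses of Example~\ref{e:regular} are met); note that extremality of $(g_+,J_+,\omega_+)$ plays no role.

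I do not expect a substantive obstacle, since all the geometric content is already contained in the $q$-independence of \eqref{eq:ambi-CR}, imported from \cite[App.~C]{ambitoric2}. The only point requiring some care is the coordinate bookkeeping: confirming that the $q=1$ specialisation of \eqref{eq:ambitoric} genuinely coincides with the Sasaki--Reeb quotient of $(\Sm,\Ds,J)$ by $\cae_1$, with the \emph{same} $A,B$ (rather than projectively rescaled quartics), by matching the contact form $\cf_{\cae_1}=(x_1-x_2)\,\cf$ and its transversal K\"ahler form $\d\cf_{\cae_1}\restr\Ds$ against \eqref{eq:ambitoric} at $q=1$, and correspondingly that the twisting function is precisely $f_q(x_1,x_2)$ rather than a homothetic multiple of it. Since the ansatz \eqref{eq:ambitoric} is explicit, this verification is routine.
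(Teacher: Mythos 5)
Your argument is correct and is essentially the paper's own: the corollary is read off from the $q$-independence of the CR structure \eqref{eq:ambi-CR}, the identification of the Sasaki structures $\cae_w=f_w(x_1,x_2)\sas/(x_1-x_2)$, and the observation that the $q=1$ quotient is orthotoric, so that the general quotient by $\cae_q=f_q\,\cae_1$ is by Definition~\ref{CR f-twist} the CR $f_q$-twist of that orthotoric metric. Your remarks that extremality is irrelevant and that the only care needed is the coordinate bookkeeping (matching $\cf_{\cae_1}=(x_1-x_2)\cf$ and its transversal K\"ahler form with the $q=1$ specialisation of \eqref{eq:ambitoric}) are both accurate.
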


\subsection{The CR twisted orthotoric ansatz}

Corollary~\ref{ambitoric-vs-orthotoric} immediately suggests a higher
dimensional extension of the positive regular ambitoric
ansatz~\eqref{eq:ambitoric}. For this, we start with an orthotoric
$2m$-manifold $M$ with K\"ahler structure~\cite{HFKG1}:
\begin{equation} \label{orthotoric}\begin{split}
g &= \sum_{j=1}^m \biggl(\frac{\Delta_j} {\Afn_j (x_j)}\,\d x_j^2
+ \frac{\Afn_j (x_j)}{\Delta_j}
\Bigl (\sum_{r=1}^m \sigma_{r-1} (\hat x_j) \, \d t_r \Bigr )^2\biggr),\\
\omega &= \sum_{j=1}^m \d x_j \wedge\Bigl (
\sum_{r=1}^m \sigma_{r-1} (\hat x_j) \d t_r \Bigr )
= \sum_{r=1}^m \d \mm_r \wedge \d t_r,\\
J \d x_j &= \frac{\Afn_j (x_j)}{\Delta_j} \,
\sum_{r=1}^m \sigma_{r-1} (\hat x_j) \, \d t_r, \qquad\qquad
J \d t_r = (-1) ^r \,\sum_{j=1}^m \frac{x_j ^{m-r}}{\Afn_j(x_j)}\,\d x_j,
\end{split}\end{equation}
where each $\Afn_j$ is a smooth function of $1$ variable,
$\mm_r=\sigma_r(x_1,\ldots x_m)$ are the momentum coordinates ($\sigma_r$
being the $r$-th elementary symmetric function with $\sigma_0=1$) $\hat
x_j=(x_k:k\neq j)$, and $\Delta_j = \prod_{k\neq j}(x_j - x_k)$.  The
separable coordinates $\bx:=(x_1, \ldots x_m)$ are called \emph{orthotoric}
and have a natural gauge freedom under simultaneous affine changes $\tilde x_j
= ax_j +b$.  The toric Killing potentials in orthotoric coordinates are
\begin{equation}\label{polarization}
f_q(\bx)=q_0 + q_1\mm_1+\cdots +q_m \mm_m \quad\text{with}\quad
\mm_r=\sigma_r(\bx),
\end{equation}
which can be viewed as the polarized form of a degree $\leq m$ polynomial 
\begin{equation}\label{unpolarization}
q(x) := f_q(x, \ldots x)= \sum_{j=0}^m \binom{j}{m}q_j x^j.
\end{equation}
As usual, to write down the CR structure $(\Sm,\Ds,J)$ over $M$, it is
convenient to view $\d t=(\d t_0,\d t_1,\ldots \d t_m)$ as a $1$-form with
values in the Lie algebra $\torh\cong\R^{m+1}$ of Killing potentials (with
basis $\sigma_0,\sigma_1,\ldots \sigma_m$). Then $\Ds=\ker\cf$ and
$J\colon\Ds^*\to\Ds^*$ are given by
\begin{equation}\label{eq:q-CR}
\cf = \d t(\bx) := \sum_{r=0}^m \mm_r \d t_r,\qquad
J(\d x_j\restr\Ds) =  \frac{\Afn_j (x_j)}{\Delta_j}\frac{\partial (\d t(\bx))}
{\partial x_j}\Big|_\Ds
\end{equation}
with $\mm_0=1$ (omitting pullbacks to $\Sm$).

\begin{prop}\label{prop:orthotoric-twist} Let $(g, \omega, J)$ be the
orthotoric K\"ahler metric~\eqref{orthotoric} and let $f_q$ be a positive
function of the form \eqref{polarization}. Then a CR $f_q$-twist of $(g,
\omega, J)$ has toric K\"ahler metric
\begin{equation} \label{orthotoric-twist}
\begin{split}
g_q &= \sum_{j = 1}^m \biggl(\frac{\Delta_j}{\Afn_j (x_j)f_q(\bx)} \, \d x_j^2
+ \frac{\Afn_j (x_j)f_q(\bx)}{\Delta_j} \Bigl (
\frac{\partial}{\partial x_j} \frac{\d t(\bx)}{f_q(\bx)} \Bigr )^2\biggr),\\
\omega_q &= \sum_{j=1}^m \d x_j \wedge\Bigl (\frac{\partial}{\partial x_j}
\frac{\d t(\bx)}{f_q(\bx)} \Bigr ), \qquad
J_q \d x_j = \frac{\Afn_j (x_j)f_q(\bx)}{\Delta_j}
\Bigl(\frac{\partial}{\partial x_j}
\frac{\d t(\bx)}{f_q(\bx)}
\Bigr).
\end{split}
\end{equation}
\end{prop}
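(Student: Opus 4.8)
The plan is to compute the twist directly in the orthotoric coordinates $\bx$, exploiting that a CR $f_q$-twist leaves the CR structure $(\Ds,J)$ untouched and merely replaces the contact form $\cf_\sas$ by $\cf_{\cae_q}=\cf_\sas/f_q(\bx)$. By Definition~\ref{CR f-twist} the twist is a Sasaki--Reeb quotient of $(\Sm,\Ds,J,\cae_q)$ with $\cae_q=f_q\sas$, where $(\Sm,\Ds,J,\sas)$ is the Sasaki manifold over $(M,g,\omega)$ of Example~\ref{e:regular}, with $\cf_\sas=\d t(\bx)$ as in~\eqref{eq:q-CR}. Since $f_q$ is a toric Killing potential, $\cae_q$ lies in the abelian algebra $\torh$ (via Lemma~\ref{l:potential} and the toric picture of \S\ref{s:projective-toric}), so the new Reeb field $X_{\cae_q}$ is a torus-generating vector field; in particular $\d x_j(X_{\cae_q})=0$ for every $j$, which is precisely why the orthotoric coordinates $x_j$ survive the twist unchanged.

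First I would record the symmetric function identity $\partial\sigma_r/\partial x_j=\sigma_{r-1}(\hat x_j)$, so that $\beta_j:=\frac{\partial(\d t(\bx))}{\partial x_j}=\sum_r\sigma_{r-1}(\hat x_j)\,\d t_r$ is the $1$-form occurring in~\eqref{orthotoric} and~\eqref{eq:q-CR}, and then set $\alpha_j:=\frac{\partial}{\partial x_j}\frac{\d t(\bx)}{f_q(\bx)}$. Because the coefficients $\sigma_r/f_q$ of $\cf_{\cae_q}$ depend only on $\bx$ and the $\d t_r$ are closed, differentiating gives $\d\cf_{\cae_q}=\sum_j\d x_j\wedge\alpha_j$. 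From $\alpha_j=(1/f_q)\beta_j-(\d t(\bx)/f_q^2)\,(\partial f_q/\partial x_j)$ together with $\d t(\bx)\restr\Ds=\cf_\sas\restr\Ds=0$ I get $\alpha_j\restr\Ds=(1/f_q)\,\beta_j\restr\Ds$; and contracting the Reeb identity $\d\cf_{\cae_q}(X_{\cae_q},\cdot)=0$ against $\d x_j(X_{\cae_q})=0$ forces $\alpha_j(X_{\cae_q})=0$. Hence $\d x_1,\alpha_1,\dots,\d x_m,\alpha_m$ are basic for the $X_{\cae_q}$-foliation (and, together with $\cf_{\cae_q}$, frame $T^*\Sm$ on a dense open set), so they descend to the quotient $M_q$, giving at once $\omega_q=\sum_j\d x_j\wedge\alpha_j$.

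It then remains to assemble the metric and the complex structure. Since the CR twist fixes $J$ and, using $\cf_\sas\restr\Ds=0$, rescales the transversal symplectic form on $\Ds$ by $1/f_q$, it rescales the transversal metric the same way: $g_{\cae_q}\restr\Ds=(1/f_q)\,g_\sas\restr\Ds$. Substituting $\beta_j\restr\Ds=f_q\,\alpha_j\restr\Ds$ into the transversal metric $g_\sas=\sum_j\bigl((\Delta_j/\Afn_j)\,\d x_j^2+(\Afn_j/\Delta_j)\,\beta_j^2\bigr)$ of~\eqref{orthotoric} and dividing by $f_q$ yields $g_q$ in the form~\eqref{orthotoric-twist}, while feeding $\beta_j\restr\Ds=f_q\,\alpha_j\restr\Ds$ into~\eqref{eq:q-CR} and lifting to the basic $1$-form gives $J_q\d x_j=(\Afn_j(x_j)f_q(\bx)/\Delta_j)\,\alpha_j$; compatibility of these, $g_q=\omega_q(\cdot,J_q\cdot)$, is then immediate from the framing. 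The only real subtlety---and the one place where the toric, rather than merely contact, hypothesis enters---is verifying that the $1$-forms $\alpha_j$ obtained by differentiating the new contact form are genuinely basic for the new Reeb foliation, equivalently that $X_{\cae_q}$ annihilates each $\d x_j$; everything else is symmetric-function bookkeeping together with the observation that $\beta_j\equiv f_q\,\alpha_j$ modulo $\cf_{\cae_q}$.
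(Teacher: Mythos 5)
Your proof is correct and follows essentially the same route as the paper's: identify the twist with the Sasaki--Reeb quotient by $\cae_q=f_q\sas$, compute $\d(\cf/f_q)=\sum_j\d x_j\wedge\alpha_j$, use $\d t(\bx)\restr\Ds=0$ to get $\beta_j\restr\Ds=f_q\,\alpha_j\restr\Ds$, and check that $\d x_j$ and $\alpha_j$ are basic. The only (harmless) variation is that you derive $\alpha_j(X_{\cae_q})=0$ by contracting the Reeb identity with $\d\cf_{\cae_q}$, whereas the paper obtains it directly from $\bigl(\d t(\bx)/f_q\bigr)(X_{\cae_q})=1$.
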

\begin{proof} The CR $f_q$-twist is the Sasaki--Reeb quotient of~\eqref{eq:q-CR}
  by the Sasaki structure $\cae_q=f_q\sas$ with contact form $\cf/f_q(\bx)$
  (i.e., $\sas$ is the Sasaki structure with contact form
  $\cf=\sas^{-1}\cf_\Ds$), and
\begin{equation*}
\d\Bigl(\frac{\cf}{f_q(\bx)}\Bigr)
= \sum_{j=1}^m \d x_j \wedge \frac{\partial}{\partial x_j}
\frac{\d t(\bx)} {f_q(\bx)}.
\end{equation*}
Now we observe that
\begin{equation*}
J(\d x_j\restr\Ds) =  \frac{\Afn_j (x_j)}{\Delta_j}\frac{\partial (\d t(\bx))}
{\partial x_j}\Big|_\Ds
= \frac{\Afn_j (x_j)}{\Delta_j} f_q(\bx) \frac{\partial}{\partial x_j}
\frac{\d t(\bx)}{f_q(\bx)}\Big|_\Ds
\end{equation*}
and the $1$-forms $\d x_j$ and $\frac{\partial}{\partial x_j} \frac{\d
  t(\bx)}{f_q(\bx)}$ are basic with respect to $\cae_q$---in the latter case
because $\bigl(\frac{\d t(\bx)}{f_q(\bx)}\bigr)(X_{\cae_q}) = 1$. Hence the
transversal K\"ahler structure of $\cae_q$ is the pullback
of~\eqref{orthotoric-twist}.
\end{proof}

We now turn to the extremality condition of the K\"ahler metrics given by
\eqref{orthotoric-twist}. By Theorem~\ref{thm:main}, such a metric is extremal
iff the orthotoric metric \eqref{orthotoric} is $(f_q, m+2)$-extremal, a
condition studied in \cite[App.~A]{AMT}. Using standard formulae for the
scalar curvature and laplacian of an orthotoric metric~\cite{HFKG1},
this condition is
\begin{multline}\label{f-ext-orthotoric}
\sum_{j=1}^m \biggl( -f_q(\bx)^2 \frac{\Afn_j''(x_j)}{\Delta_j} + 2(m+1)
f_q(\bx)\frac{\partial f_q}{\partial x_j} \frac{\Afn_j'(x_j)}{\Delta_j}
-(m+1)(m+2)\Bigl(
\frac{\partial f_q}{\partial x_j}\Bigr)^2
\frac{\Afn_j(x_j)}{\Delta_j}\biggr)\\
=-\sum_{j=1}^m \frac{f_q(\bx)^{m+3}}{\Delta_j}\frac{\partial^2}{\partial
  x_j^2}\biggl(\frac{\Afn_j(x_j)}{f_q(\bx)^{m+1}}\biggr)= \Scal_{f_q,m+2}(g) =
\sum_{k=0}^m c_k\mm_k.
\end{multline}
for some constants $c_0, \ldots c_m$.  If we let $\Afn_j(x)=P(x)$ for a
$j$-independent polynomial $P$ of degree $\leq m+2$, the metric
\eqref{orthotoric} is Bochner-flat by \cite{HFKG1}, and we obtain (for any
$q$) a solution of \eqref{f-ext-orthotoric} by Proposition~\ref{BF-local}.
When $q=1$, \eqref{f-ext-orthotoric} describes the extremality condition for
\eqref{orthotoric}, which is studied in \cite{HFKG1}, where the solutions are
given as $\Afn_j(x) = P(x) + \ca_{j1} x + \ca_{j0}$ for a $j$-independent
polynomial $P$ of degree $\leq m+2$ and arbitrary real constants $\ca_{j1},
\ca_{j0}$ ($j\in\{1, \ldots m\}$).  Another special case is $q(x)=x^m$, i.e.,
$f_q=\sigma_m$, which is studied in \cite[Prop.~A2]{AMT}, where the solutions
are given as $\Afn_j(x)=P(x) + \ca_{j1}x^{m+1} + \ca_{j0}x^{m+2}$ for a
$j$-independent polynomial $P$ of degree $\leq m+2$ and arbitrary real
constants $\ca_{j1}, \ca_{j0}$. However, we notice that in this case the
corresponding extremal K\"ahler metric $g_q$ given by \eqref{orthotoric-twist}
is orthotoric with respect to the variables $\tilde x_j= 1/x_j$ and functions
$\tilde \Afn_j(\tilde x_j) = \tilde x_j^{m+2}\Afn_j(1/\tilde x_j)$, so the
corresponding extremal K\"ahler metrics are not new. More generally, we may
extend arguments from \cite[Lemma 6]{HFKG1} and \cite[Prop. A2]{AMT} as
follows.

\begin{prop} Let $m\geq 3$. Then the orthotoric K\"ahler
metric~\eqref{orthotoric} is $(f_q,m+2)$-extremal for some positive $f_q$ in
the form \eqref{polarization} if and only if either all $\Afn_j(x)$ are equal
to a $j$-independent polynomial of degree $\leq m+2$ or else the polynomial
$q$ has a root of multiplicity $m$ \textup(possibly at infinity\textup) so
that, up to a simultaneous affine transformation of the $x_j$ in
\eqref{orthotoric}, we may assume that either $q(x)=1$ or $q(x)=x^m$. Then,
$\Afn_j(x)$ are the solutions described in \cite[Prop.~17]{HFKG1} and
\cite[Prop.~A2]{AMT}, respectively. In particular, each extremal K\"ahler
metric of the form \eqref{orthotoric-twist} is either Bochner-flat or
orthotoric with respect to suitable variables.
\end{prop}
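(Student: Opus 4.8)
The plan is to analyse the $(f_q,m+2)$-extremality condition~\eqref{f-ext-orthotoric} directly, extending the methods of \cite[Lemma~6]{HFKG1} and \cite[Prop.~A2]{AMT}. The ``if'' direction is quick: if all the $\Afn_j$ coincide with a single polynomial of degree $\leq m+2$, then~\eqref{orthotoric} is Bochner-flat by \cite{HFKG1}, hence $(f_q,m+2)$-extremal for \emph{every} positive $f_q$ by Proposition~\ref{BF-local}; and if $q=1$ (so $f_q\equiv1$) or $q=x^m$ (so $f_q=\mm_m$), the indicated $\Afn_j$ are precisely the solutions of~\eqref{f-ext-orthotoric} classified in \cite[Prop.~17]{HFKG1} and \cite[Prop.~A2]{AMT} respectively. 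So the substance lies in the converse, for which I assume~\eqref{f-ext-orthotoric} and proceed in two steps.

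First I would show that each $\Afn_j$ is a polynomial of degree $\leq m+2$; here the case of constant $q$ is just $q=1$, already covered by \cite[Prop.~17]{HFKG1}, so I may take $q$ nonconstant and the remaining $x_k$ generic, with $a_j:=\partial f_q/\partial x_j\neq0$. In~\eqref{f-ext-orthotoric} the right-hand side is affine in $x_j$, while for each $i\neq j$ the powers of $f_q$ recombine so that, as a function of $x_j$, the $i$-th summand on the left has no pole apart from a simple one at $x_j=x_i$ coming from $\Delta_i$. Isolating the $j$-th summand and clearing the factor $\Delta_j=\prod_{k\neq j}(x_j-x_k)$ then yields, with $s:=f_q$ regarded as an affine function of $x_j$, the inhomogeneous Euler equation
\begin{equation*}
s^2\,\frac{d^2\Afn_j}{ds^2}-2(m+1)\,s\,\frac{d\Afn_j}{ds}+(m+1)(m+2)\,\Afn_j=R_j,
\end{equation*}
where $R_j$ is a polynomial of degree $\leq m$. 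Its indicial exponents are the distinct integers $m+1$ and $m+2$, which do not resonate with the polynomial forcing term, so its general solution --- hence $\Afn_j$ --- is a polynomial of degree $\leq m+2$.

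With all $\Afn_j$ now polynomial, \eqref{f-ext-orthotoric} becomes an identity of rational functions of $\bx$ whose only possible poles lie along the diagonals $\{x_i=x_j\}$, and cancellation of the residue there forces, for each pair $i\neq j$, that $D:=\Afn_i-\Afn_j$ satisfy the linear differential equation
\begin{equation*}
F^2D''-(m+1)\,F'F\,D'+\tfrac14(m+1)(m+2)\,(F')^2D=0 ,
\end{equation*}
where $F(x):=f_q|_{x_i=x_j=x}$ is a quadratic in $x$ whose coefficients are symmetric functions of the spectator variables $x_k$ $(k\neq i,j)$, the identity holding for all values of these spectators. Evaluating at a simple zero of $F$ collapses the equation to $(F')^2D=0$, so the fixed polynomial $D$ must vanish there; consequently, if some zero of $F$ varies with the spectators, $D\equiv0$. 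Writing $f_q=q_0+q_1\mm_1+\dots+q_m\mm_m$ as in~\eqref{polarization} and $F=Ax^2+2Bx+C$, the coefficients $A,B,C$ are, in the basis of elementary symmetric functions $1,\sigma_1,\dots,\sigma_{m-2}$ of the $m-2\geq1$ spectators, the three consecutive length-$(m-1)$ windows $(q_2,\dots,q_m)$, $(q_1,\dots,q_{m-1})$, $(q_0,\dots,q_{m-2})$ of the sequence $(q_0,\dots,q_m)$; a direct computation then shows that the discriminant $B^2-AC$ of $F$ vanishes identically in the spectators precisely when $(q_0,\dots,q_m)$ is a geometric progression, i.e.\ $q(x)=c(1+\lambda x)^m$ with possibly $\lambda=0$. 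Hence, unless $q$ has a root of multiplicity $m$ (at infinity when $\lambda=0$), we get $\Afn_i=\Afn_j$ for all $i,j$, which is the first alternative; and if $q$ does have such a root, a simultaneous affine change of the $x_j$ normalises $q$ to $1$ or to $x^m$, and \cite[Prop.~17]{HFKG1}, \cite[Prop.~A2]{AMT} supply the stated $\Afn_j$.

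The hard part will be exactly this last algebraic point: checking uniformly for all $m\geq3$ that identical vanishing of the discriminant of $F$ in the spectator variables is equivalent to $q$ being a perfect $m$-th power of a linear form --- a catalecticant-type rigidity --- together with the bookkeeping needed to set up the residue computation and to dispose of the degenerate sub-cases in which $F$ drops degree (i.e.\ $q$ has small degree). Granting this, the closing assertion is immediate: in the first alternative~\eqref{orthotoric} is Bochner-flat, and since a CR $f_q$-twist leaves the underlying CR manifold unchanged, \eqref{orthotoric-twist} is again Bochner-flat; for $q=1$ the twist is trivial, so~\eqref{orthotoric-twist} is orthotoric in the original variables $x_j$; and for $q=x^m$ it is orthotoric in the variables $\tilde x_j=1/x_j$, as observed just before the statement.
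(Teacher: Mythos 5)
Your proposal is correct and follows the same architecture as the paper's proof: first force each $\Afn_j$ to be a polynomial of degree $\leq m+2$, then restrict \eqref{f-ext-orthotoric} to the diagonal $x_j=x_k$ to get a second-order relation on $D=\Afn_j-\Afn_k$, and finally analyse how that relation depends on the spectator variables. Your diagonal relation $F^2D''-(m+1)F'FD'+\tfrac14(m+1)(m+2)(F')^2D=0$ is literally the paper's \eqref{key-relation} rewritten via $F=f_0+2xf_1+x^2f_2$, $F'=2(f_1+xf_2)$. The two genuine variations are: (i) in step one you solve an Euler equation in $s=f_q$ (non-resonant since the indicial roots $m+1,m+2$ exceed $\deg R_j\leq m$), where the paper instead differentiates $(m+1)$ times in $x_j$ to extract $f_q^2\Afn_j^{(m+3)}=0$ directly --- both work; (ii) in step three you evaluate at a simple root of $F$ and argue via the discriminant, where the paper runs a three-case analysis on the dimension of $\mathop{\mathrm{span}}(f_0,f_1,f_2)$. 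One point in (ii) needs more care than you give it: from ``discriminant $\not\equiv0$'' you need ``some \emph{simple} root of $F$ actually varies with the spectators,'' and a priori $F$ could have two distinct roots that are both fixed. Ruling this out requires showing that fixed roots force $f_0,f_1,f_2$ to be pairwise proportional, and that proportionality of the three consecutive windows of $(q_0,\dots,q_m)$ (with $q\not\equiv0$) forces the ratio relation $\lambda_2=\lambda_1^2$, hence discriminant $\equiv0$ --- which is exactly the recursion $q_{r+1}=\lambda_1 q_r$, $q_{r+2}=\lambda_2 q_r$ carried out in the paper's Case~3. So the ``catalecticant rigidity'' you defer is not an optional extra at the end; it is also the ingredient that closes the implication you use in the main dichotomy. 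Granting that computation (which is short and true), the rest of your argument, including the concluding identification of the twisted metrics as Bochner-flat or orthotoric, matches the paper.
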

\begin{proof} Multiplying \eqref{f-ext-orthotoric} by
$\Delta := \prod_{j<k} (x_j - x_k)$, we get the relation
\begin{equation*}
f_q(\bx)^{m+3} \sum_{j=1}^m \pm  \Delta(\hat x_j)
\frac{\partial^2}{\partial x_j^2}
\Bigl(\frac{\Afn_j(x_j)}{f_q(\bx)^{m+1}}\Bigr)
= \Delta\,\Bigl(\sum_{k=0}^mc_k\sigma_k(\bx)\Bigr),
\end{equation*}
where $\Delta(\hat x_j) = \prod_{i<k \neq j}(x_i - x_k)$ and the signs
$\pm$ are left unspecified. The right hand side in the above equality is a
polynomial of degree $\leq m$ in each variable $x_j$, $\Delta(\hat x_j)$ is
a polynomial of degree $(m-2)$ in any $x_i, i\neq j$ and of degree $0$ in
$x_j$, whereas $f_q(\bx)$ is a polynomial of degree $\leq 1$
in each $x_j$. It follows that for $m\geq 2$,
\begin{equation*}
0=\frac{\partial^{m+1}}{\partial x_j^{m+1}}
\biggl( f_q(\bx)^{m+3} \frac{\partial^2}{\partial x_j^2}
\Bigl(\frac{\Afn_j(x_j)}{f_q(\bx)^{m+1}} \Bigr)\biggr)
= f_q(\bx)^2 \Afn_j^{(m+3)}(x_j),
\end{equation*}
showing that each $\Afn_j$ must be a polynomial of degree $\leq m+2$.

Now let $k \neq j$ be fixed indices. Multiplying \eqref{f-ext-orthotoric} by
$x_j-x_k$ and letting $x_j=x=x_k$ leads to the vanishing of
\begin{equation}\label{key-relation}
(f_0 + x f_1)^2 P_{jk}''(x)
+ 2(f_0 + xf_1)(f_1 + xf_2) x^{m+2}\Bigl(\frac{P'_{jk}(x)}{x^{m+1}}\Bigr)'
+ (f_1+xf_2)^2x^{m+3}\Bigl(\frac{P_{jk}(x)}{x^{m+1}}\Bigr)'',
\end{equation} 
where $P_{jk}(x)= \Afn_j(x)-\Afn_k(x)$. Here each $f_k = \sum_{r=0}^m q_{r+k} \hat
\sigma_r$, with $\hat \sigma_r$ denoting the $r$-th elementary symmetric
function of the variables $x_i: i\neq j, k$ (and letting $\hat \sigma_r=0$ for
$r>m-2$), is a polynomial of degree $\leq 1$ in each $x_i:i\neq j, k$.
Equivalently, $f_k: k\in\{0, 1, 2\}$ can be viewed as affine functions in the
variables $\hat \sigma_1, \ldots \hat \sigma_{m-2}$ and thus
\eqref{key-relation} can be viewed as a polynomial of degree $\leq 2$ in $\hat
\sigma_1, \ldots \hat \sigma_{m-2}$.  By making an simultaneous affine change
of the variables $x_j$ in \eqref{orthotoric} if necessary (which preserves the
orthotoric structure of the metric, see \cite{HFKG1}), we can assume without
loss that $q_0\neq 0$, i.e., $f_0 \neq 0$. We thus consider the following
three cases.

\smallbreak\noindent {\bf Case 1.} $f_0, f_1, f_2$ are linearly independent
affine functions of $\hat \sigma_1, \ldots \hat \sigma_{m-2}$. Then, using
$f_0, f_1, f_2$ as independent variables, and considering the coefficients of
$f_2^2$, $f_0f_1$ and $f_0^2$ in \eqref{key-relation} yields that $P_{jk}(x)$
must belong to the common kernel of the ODEs $P''(x)=0$, $(P'(x)/x^{m+1})'=0$
and $(P(x)/x^{m+1})''=0$. The latter is trivial, thus showing that $P_{jk}(x)
\equiv 0$ in this case, i.e., $\Afn_j(x)=P(x)$ must be a $j$-independent
function.

\smallbreak\noindent {\bf Case 2.} $f_0, f_1, f_2$ span a $2$-dimensional
subspace of affine functions of $\hat \sigma_1, \ldots \hat \sigma_{m-2}$.  In
this case, \eqref{key-relation} is a polynomial of degree $2$ in two
independent variables in the span of $f_0, f_1, f_2$, which places three
relations involving $P''(x)$, $(P'(x)/x^{m+1})'$ and $(P(x)/x^{m+1})''$. Using
their functional independence, we conclude again that $P''(x)=0$,
$(P'(x)/x^{m+1})'=0$ and $(P(x)/x^{m+1})''=0$, i.e.,
$P_{jk}(x)=\Afn_j(x)-\Afn_k(x)=0$ so that $\Afn_j(x)=P(x)$ is a $j$-independent
function.

\smallbreak\noindent {\bf Case 3.}  $f_0, f_1, f_2$ span a $1$-dimensional
subspace of affine functions in $\hat \sigma_1, \ldots \hat \sigma_{m-2}$.
Thus, in this case, $f_1 = \lambda_1 f_0$ and $f_2 = \lambda_2 f_0$ for some
$\lambda_1, \lambda_2\in \R$.  The first identity means $q_{r+1} = \lambda_1
q_r$ for $r\in\{0, \ldots m-1\}$ whereas the second identity is equivalent to
$q_{r+2} = \lambda_2 q_r$ for $r\in\{0, \ldots m-2\}$. As we have assumed $q_0
\neq 0$, we conclude that $\lambda_2=\lambda_1^2$ and then $q_r = \lambda_1^r
q_0$, i.e., $q(x)=q_0(1+ \lambda_1 x)^m$. It thus follows that either $q(x)=1$
(i.e., $\lambda_1 =0$) and then \eqref{f-ext-orthotoric} describes the
extremal K\"ahler condition of an orthotoric metric, which has been analysed
in \cite[Prop. 15]{HFKG1}. Otherwise, by making a simultaneous affine change
of $x_j$ in \eqref{orthotoric}, we can assume $q(x)=x^m$ and
\eqref{f-ext-orthotoric} then reduces to finding $(\sigma_m, m+2)$-extremal
metrics, which has been accomplished in \cite[Prop.~A2]{AMT}.

\medbreak\noindent To summarize, we have proven that one of the following
holds:
\begin{bulletlist}
\item $q=1$ and $\Afn_j(x)= P(x) + \ca_{j1}x + \ca_{j0}$, for a $j$-independent
  polynomial $P$ of degree $\leq m+2$;
\item up to a simultaneous affine transformation of the $x_j$ in
  \eqref{orthotoric}, $q(x)=x^m$ and $\Afn_j(x) = P(x) + \ca_{j1}x^{m+1} + \ca_{j0}
  x^{m+2}$ for a ($j$-independent) polynomial $P$ of degree $\leq m+2$;
\item $\Afn_j(x)=P(x)$ are all equal to a polynomial $P$ of degree $\leq m+2$.
\end{bulletlist}
In the third case, the orthotoric K\"ahler metric \eqref{orthotoric} is
Bochner-flat (see e.g. \cite[Prop.~17]{HFKG1} or \cite{Bryant}) and any $q$
provides a solution to \eqref{f-ext-orthotoric} (see
Proposition~\ref{BF-local}).  By result of Webster~\cite{Webster}, any CR
$q$-twist of $g$ is again a Bochner-flat K\"ahler metric, which completes the
proof.
\end{proof}

\begin{rem} Similar arguments yield a classification of $(f_q,\wt)$-extremal
orthotoric metrics, where $\wt\in\R\setminus\{1, \ldots m+2\}$ and $m \geq 3$.
Indeed, as shown in \cite{AMT}, in this case we have to consider the
equation
\begin{equation}\label{f-ext-orthotoric-2}
-\sum_{j=1}^m \frac{f_q(\bx)^{\wt+1}}{\Delta_j}\frac{\partial^2}{\partial
  x_j^2}\biggl(\frac{\Afn_j(x_j)}{f_q(\bx)^{\wt-1}}\biggr)= \Scal_{f_q,\wt}(g) =
\sum_{k=0}^m c_k\mm_k.
\end{equation}
Multiplying by $x_j-x_k$ and letting $x_j=x=x_k$ leads again to the conclusion
that one of the following three cases occurs: (1) $q= 1$ and $\Afn_j(x)=P(x) +
\ca_{j1}x + \ca_{j0}$ for a polynomial $P$ of degree $\leq m$ by the
classification in \cite[Prop. 15]{HFKG1}, or, (2) up to a simultaneous affine
change of the variables $x_j$ in \eqref{orthotoric}, $q= x^m$ and $\Afn_j(x)=
P(x) + \ca_{j1} x^{\wt-1} + \ca_{j0} x^\wt$ according to \cite[Prop. A2]{AMT},
or (3) $\Afn_j(x)= P(x)$ are $j$-independent. In the third case, multiplying
\eqref{f-ext-orthotoric-2} with $\Delta$ leads to the equation
\begin{equation*}
0=\frac{\partial^{m+1}}{\partial x_j^{m+1}} \Bigl( f_q(\bx)^{\wt+1}
\frac{\partial^2}{\partial x_j^2}\Bigl(\frac{P(x_j)}{f_q(\bx)^{\wt-1}}\Bigr)\Bigr)
= f_q(\bx)^{\wt-m} \frac{\partial^2}{\partial x_j^2}
\Bigl(\frac{P^{(m+1)}(x_j)}{f_q(\bx)^{\wt-m-2}}\Bigr).
\end{equation*}
Letting $x_j=x$, $f_k = \sum_{r=0}^m q_r \hat \sigma_{r-k}$ where $\hat
\sigma_{r}$ denotes the $r$-th elementary symmetric function of $x_i, i\neq
j$ with $\hat \sigma_r=0$ for $r\geq m$, the above conditions reduce to
the vanishing of
\begin{equation*}
(f_0 + xf_1)^2 P^{(m+3)}(x)  - 2(\wt-m-2) f_1(f_0+ xf_1) P^{(m+2)}(x)
+ (\wt-m-2)(\wt-m-1)f_1^2P^{(m+1)}(x).
\end{equation*}
If $f_0$ and $f_1$ are linearly independent affine functions of $\hat
\sigma_1, \ldots \hat \sigma_{m-1}$ (and as $\wt\neq m+1,m+2$ by assumption),
this implies $P^{(m+1)}(x)=0$, i.e., $P$ must be a polynomial of degree $\leq
m$ and the metric \eqref{orthotoric} is flat (see \cite[Prop.~17]{HFKG1}).
These are precisely the solutions described in
\cite[Prop.~A1]{AMT}. Otherwise, either $f_0=0$ (i.e., $q=x^m$) or $f_1 =
\lambda f_0$ (i.e., $q(x)=q_0(1+ \lambda x)^m$), so we are again in a
situation covered by \cite[Prop.~A2]{AMT} and \cite[Prop.~15]{HFKG1}.
\end{rem}

\section{The Calabi problem and non-existence results}\label{s:global}

\subsection{The Calabi problem for $(f,\wt)$-extremal K\"ahler metrics}

Let $(M, J)$ be a compact connected complex manifold of real dimension $2m$,
and $\Omega \in H^2(M, \R)$ a K\"ahler class.  As observed in \cite{AM, FO,
  lahdili1}, many features of the theory of extremal K\"ahler metrics extend
naturally to the $(f,\wt)$-extremal case. In particular, one can formulate a
weighted version of the Calabi problem~\cite{calabi} which seeks an
$(f,\wt)$-extremal K\"ahler metric $(g,\omega)$ with $\omega\in\Omega$.  We
pin down the function $f$ indirectly by fixing first a quasi-periodic
holomorphic vector field with zeroes $K$ generating a torus $\T \leq \Aut^r(M,
J)$ inside the reduced group of automorphisms of $(M, J)$ (see
e.g.~\cite{gauduchon-book}), and secondly, a real constant $\av>0$ such that for
any $\T$-invariant K\"ahler metric $(g, \omega)$ with $\omega \in \Omega$, the
Killing potential $f$ of $K$ with respect to $g$, normalized by $\int_M f
\omega^m/m! = \av$, is positive on $M$, see \cite[Lemma 1]{AM}.

\begin{problem}\label{Calabi-Kahler-problem}  Is there a $\T$-invariant
K\"ahler metric $(\tilde g, \tilde \omega)$ on $(M, J)$ with $\tilde{\omega}
\in \Omega$, which is $(\tilde f, \wt)$-extremal where $\tilde f$ is the
K\"ahler potential of $K$ with respect to $\tilde g$ determined by $\int_M
\tilde f \tilde \omega^m/m! = \av$? We refer to such metrics as $(K, \av,
m+2)$-extremal.
\end{problem}

\begin{rem}\label{r:potential} It is easy to check that if $(g, \omega)$
and $(\tilde g, \tilde \omega)$ are two $\T$-invariant K\"ahler metrics in
$\Omega$ with K\"ahler forms $\tilde \omega = \omega +\d\d^c \varphi$ for a
$\T$-invariant smooth function $\varphi$ on $M$, then the corresponding
$\av$-normalized Killing potentials $\tilde f$ and $f$ of $K$ are related by
\begin{equation}\label{potential}
\tilde f = f  + \d^c\varphi(K).
\end{equation}
Indeed, the proof of \cite[Lemma 1]{AM} shows that $\tilde f = f \circ \Phi$
for some diffeomorphism $\Phi$ of $M$. It thus follows that if $f$ is the
$\av$-normalized Killing potential of $K$ with respect to $g$, then $\tilde f$
is the unique Killing potential of $K$ with respect to $\tilde g$ such that
$\tilde f(M)= f(M)$. The latter property holds for $\tilde f$ defined by
\eqref{potential}, as can be seen by considering points of minima and maxima
for $f$ and $\tilde f$ (at which $K=J \grad_g f = J\grad_{\tilde g} \tilde f$
vanishes).
\end{rem}


\subsection{The Calabi problem for $(\cae,\wt)$-extremal Sasaki metrics}

Let $\Sm$ be a compact connected $(2m+1)$-manifold.
Following~\cite{BG-book,BGS}, one can extend the Calabi problem to an
analogous problem in Sasaki geometry by fixing a nowhere zero vector field $X$
as the candidate for the Sasaki--Reeb vector field of the extremal Sasaki
structure on $\Sm$, together with a complex structure $J_X$ on the quotient
bundle $\Ds_X$ of $T\Sm$ by the span of $X$. If $(\Ds, J, \sas)$ is a Sasaki
structure with $X_\sas= X$ then $\Ds$ is transverse to $X$ and so the
projection onto $\Ds_X$ is a bundle isomorphism, and we can require in
addition that this isomorphism intertwines $J$ and $J_X$. The corresponding
Sasaki structures on $\Sm$ are completely determined by their contact
distributions, or equivalently, their contact forms $\cf$, with $\cf(X)=1$ and
$\d\cf(X,\cdot)=0$.

\begin{defn}\label{d:sasaki-polarization}~\cite{BGS}
The subspace $\cS(X,J_X)$ of $\Omega^1(\Sm)$ whose elements $\cf$ are contact
forms of Sasaki structures compatible with $(X,J_X)$ is called a \emph{Sasaki
  polarization} of $(\Sm, \Ds, J, X)$. We also fix a torus $\T$ in the
automorphism group $\Aut(\Sm,X,J_X)$ and let $\cS(X, J_X)^\T$ denote the
$\T$-invariant elements of $\cS(X,J_X)$.
\end{defn}

One can now imitate constructions in K\"ahler geometry by using the basic de
Rham complex $\Omega^\bullet_X(\Sm)=\{\alpha\in \Omega^\bullet(\Sm):
\imath_X\alpha = 0 =\cL_X \alpha\}$, with differential $\d_X$ given by
restriction of $\d$ (which evidently preserves basic forms). Since
$\{\alpha\in \Wedge^kT^*\Sm:\imath_X\alpha=0\}$ is naturally isomorphic to
$\Wedge^k\Ds_X^*$, $J_X^*$ is also well defined on $\Omega^\bullet_X(\Sm)$,
yielding a twisted differential $\d^c_X$. The same holds for the subspace
$\Omega^\bullet_X(\Sm)^\T$ of $\T$-invariant basic forms.
Following~\cite[Lemma~3.1]{BGS} and \cite[Prop.~7.5.7]{BG-book}, $\cS(X,
J_X)^\T$ is an open subset of an affine space modelled on
$C^\infty_{\Sm,0}(\R)^\T \times \Omega^1_{X,\mathrm{cl}}(\Sm)^\T$, where
$C^\infty_{\Sm,0}(\R)^\T$ denotes the quotient by constants of the space of
smooth $\T$-invariant functions on $\Sm$, and
$\Omega^1_{X,\mathrm{cl}}(\Sm)^\T$ denotes the basic $\T$-invariant closed
$1$-forms on $\Sm$.  Indeed, for any two elements $\cf,\tilde\cf\in \cS(X,
J_X)^\T$, $\tilde\cf-\cf$ is basic and so
\begin{equation}\label{parametrization}
\tilde \cf = \cf + \d_X^c \varphi + \alpha
\end{equation}
for a $\T$-invariant smooth function $\varphi$, and a basic $\T$-invariant
closed $1$-form $\alpha$.  It follows from \eqref{parametrization} that the
induced K\"ahler forms on local quotients $(M, J)$ of $\Sm$ by $X$ are linked
by $\tilde \omega = \omega + \d\d^c \varphi$, i.e., belong to the same
K\"ahler class. Moreover any $K$ in the Lie algebra of $\T$ is CR for both CR
structures $(\Ds, J)$ and $(\tilde \Ds, \tilde J)$ induced by $\cf$ and
$\tilde\cf$, and hence induces on any such $M$ a Killing vector field, also
denoted $K$, for both $\omega$ and $\tilde\omega$, with respective Killing
potentials pulling back to $f = \cf (K)$ and $\tilde f = \tilde \cf(K)= f +
\d^c \varphi (K) + \alpha(K)$. Notice that by the $\T$-invariance and
closedness of $\alpha$, the term $\alpha(K)$ is a constant.

\begin{lemma}\label{l:Reeb} Let $(\Sm, \Ds, J)$ be a compact CR manifold
and $\sas, \cae \in \crJ_+(\Ds, J)$ with $[\cae,\sas]=0$. Let $X=X_\sas$,
$K=X_\cae$. Then for any $\tilde \cf\in \cS(X, J_X)$ with $\cL_K\tilde\cf=0$,
$K$ is a Sasaki--Reeb vector field for the induced CR structure
$(\tilde\Ds,\tilde J)$.
\end{lemma}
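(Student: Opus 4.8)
The plan is to reduce the statement, via Lemma~\ref{l:contact}, to two claims about the lift $\tilde\cae:=\cf_{\tilde\Ds}(K)\in\con(\Sm,\tilde\Ds)$ of $K$, and then to dispatch each. Let $\tilde\sas\in\con_+(\Sm,\tilde\Ds)$ be the section with $\cf_{\tilde\sas}=\tilde\cf$, so $X=X_{\tilde\sas}$, and put $\tilde f:=\tilde\cf(K)\in C^\infty_\Sm(\R)$; since $T\Sm/\tilde\Ds$ is a line bundle, $\cf_{\tilde\Ds}(\cdot)=\tilde\cf(\cdot)\,\tilde\sas$, so $\tilde\cae=\tilde f\,\tilde\sas$. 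By Lemma~\ref{l:contact}, ``$K$ is a Sasaki--Reeb vector field for $(\tilde\Ds,\tilde J)$'' means exactly that $K$ is a contact vector field for $\tilde\Ds$ and $\tilde\cae\in\crJ_+(\tilde\Ds,\tilde J)=\crJ(\tilde\Ds,\tilde J)\cap\con_+(\Sm,\tilde\Ds)$, i.e.\ \textup{(a)}~$K$ is a contact vector field for $\tilde\Ds$ with $\cL_K\tilde J=0$, and \textup{(b)}~$\tilde f>0$ on $\Sm$. I would also record two facts about the original data: since $\cae\in\crJ_+^\sas$, Lemma~\ref{l:potential} gives $\cae=f\sas$ with $f=\cf_\sas(K)>0$ everywhere; and $[\cae,\sas]=0$ forces $[X,K]=0$ as vector fields (the contact vector field $[X_\cae,X_\sas]$ is, by the uniqueness in Lemma~\ref{l:contact}, the lift of $\cf_\Ds([X_\cae,X_\sas])=[\cae,\sas]=0$), whence $\cL_K\cf_\sas=0$: as $K$ is contact for $\Ds=\ker\cf_\sas$, $\cL_K\cf_\sas$ is a function multiple of $\cf_\sas$, and this function vanishes since $(\cL_K\cf_\sas)(X)=\cL_K(\cf_\sas(X))-\cf_\sas([K,X])=0$.

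The hypothesis $\cL_K\tilde\cf=0$ immediately makes $\cL_K$ preserve $C^\infty_\Sm(\tilde\Ds)$, so $K$ is a contact vector field for $\tilde\Ds$; this is half of \textup{(a)}. For $\cL_K\tilde J=0$, I would transport the identity $\cL_K J=0$ (which holds because $\cae\in\crJ(\Ds,J)$) along the natural isomorphism $\Phi:=\Id-\tilde\cf\otimes X\colon\Ds\to\tilde\Ds$, the projection onto $\tilde\Ds$ along the line field $X$, whose inverse is the restriction to $\tilde\Ds$ of $\Psi:=\Id-\cf_\sas\otimes X$, and which intertwines $J$ and $\tilde J$ since both $J$ on $\Ds$ and $\tilde J$ on $\tilde\Ds$ induce $J_X$ on $\Ds_X$. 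Now $\cL_K\Phi=-(\cL_K\tilde\cf)\otimes X-\tilde\cf\otimes\cL_K X=0$ and, likewise, $\cL_K\Psi=0$, so applying the Leibniz rule to $\tilde J=\Phi\circ J\circ\Psi$ gives $\cL_K\tilde J=0$. (Alternatively, one can push everything down to a local Sasaki--Reeb quotient by $X$ and argue exactly as in the proof of Lemma~\ref{l:potential}.)

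For \textup{(b)}, which is the crux, I would use compactness: $\tilde f$ attains its minimum at some $\pt\in\Sm$, and from $\imath_K\d\tilde\cf=\cL_K\tilde\cf-\d\imath_K\tilde\cf=-\d\tilde f$ and $\d\tilde f_\pt=0$ we get $\imath_{K_\pt}\d\tilde\cf_\pt=0$. But $\tilde\cf$ is a contact form with $\imath_X\d\tilde\cf=0$ and $\d\tilde\cf\restr{\tilde\Ds}$ nondegenerate, so $\ker\d\tilde\cf_\pt=\R\,X_\pt$; hence $K_\pt=c\,X_\pt$ for some $c\in\R$. Since $\tilde\cf(X)=1=\cf_\sas(X)$, evaluating these two $1$-forms on $K_\pt$ gives $\tilde f(\pt)=c=\cf_\sas(K_\pt)=f(\pt)>0$, so $\tilde f\geq\tilde f(\pt)>0$ on all of $\Sm$. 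Together with \textup{(a)} this shows $\tilde\cae\in\crJ_+(\tilde\Ds,\tilde J)$, i.e.\ $K=X_{\tilde\cae}$ is a Sasaki--Reeb vector field for $(\tilde\Ds,\tilde J)$.

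I expect \textup{(b)} to be the only genuinely delicate point, and the place where the hypothesis $\cL_K\tilde\cf=0$ is used essentially: without it the lift of $K$ to $\con(\Sm,\tilde\Ds)$ need not even be nonvanishing, let alone positive. The mechanism is that $\cL_K\tilde\cf=0$ makes $K$ proportional to $X$ at every critical point of $\tilde f$, which pins $\tilde f$ there to the manifestly positive potential $f=\cf_\sas(K)$ of the original Sasaki structure; in particular the minima of $\tilde f$ and of $f$ coincide.
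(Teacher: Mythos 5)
Your proposal is correct and follows essentially the same route as the paper: the paper likewise reduces everything to the positivity of $\tilde f=\tilde\cf(K)$ (dismissing the CR-invariance as holding ``by construction'') and establishes it by evaluating at a global minimum of $\tilde f$, where $K$ becomes proportional to $X$ and hence $\tilde f(\pt)=\cf(K)(\pt)>0$. The only cosmetic difference is that the paper reaches the critical-point identity via the explicit Reeb-change formula~\eqref{eq:Reeb-change} applied to $\tilde\cf$, whereas you use $\imath_K\d\tilde\cf=-\d\tilde f$ and $\ker\d\tilde\cf_\pt=\R X_\pt$; these are the same mechanism.
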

\begin{proof} As $K$ is a CR vector field by construction, we need to check
that $\tilde f:=\tilde\cf(K) >0$.  We let $\cf\in \cS(X, J_X)$ be the contact
form of $(\Ds,\sas)$. Since $K$ is contact with respect to
$\tilde\Ds:=\ker\tilde\cf$, we have $K=\tilde f X-
(\d\tilde\cf\restr{\tilde\Ds})^{-1}(\d \tilde f\restr{\tilde\Ds})$ and hence
\[
\cf(K) = \tilde f - (\d\tilde\cf\restr{\tilde\Ds})^{-1}(\d \tilde
f\restr{\tilde\Ds},\cf\restr{\tilde\Ds})
\]
Evaluating this relation at a global minimum $\pt$ of $\tilde f$ we obtain
$\tilde f(\pt) = \cf(K) (\pt) >0$.
\end{proof}

We now specialize the above set-up. First, we fix a nowhere zero vector $K$
and let $\T$ be the torus in $\Aut(\Sm,X,J_X)$ generated by $X$ and $K$.  In
addition, following~\cite{FOW}, we fix a basepoint $\cf\in \cS(X,J_X)^\T$ with
corresponding Sasaki structure $(\Ds,J,\sas)$, and restrict attention to the
affine slice of $\cS(X, J_X)^\T$ consisting of contact forms $\tilde\cf$
related to $\cf$ by~\eqref{parametrization} with $\alpha=0$. We may thus
identify this slice with
\begin{equation}\label{eq:param}
\Sc(X,J_X)^\T:=\{\varphi\in C^\infty_{\Sm,0}(\R)^\T\,|\, \cf_\varphi:=\cf +
\d^c_X\varphi \text{ is a contact form}\}
\end{equation}
We also write $(\Ds_\varphi,J_\varphi,\sas_\varphi)$ for the Sasaki structure
induced by $\cf_\varphi$ for $\varphi \in \Sc_K(X,J_X)^\T$, and let
$\cae_\varphi=\cf_{\Ds_\varphi}(K)$ and $\cae=\cf_\Ds(K)$. In view of
Lemmas~\ref{l:potential} and \ref{l:Reeb}, we now have an analogue of
Problem~\ref{Calabi-Kahler-problem} for $(\cae,\wt)$-extremal Sasaki metrics.

\begin{problem}\label{Calabi-Sasaki-problem} Given a compact CR
manifold $(\Sm, \Ds, J)$ of Sasaki type and $\sas, \cae\in \crJ_+(\Sm, \Ds,
J)$ with $[\sas, \cae]=0$, is there $\varphi \in \Sc(X, J_X)^\T$ such that
$(\Ds_\varphi, J_\varphi, \sas_\varphi)$ is $(\cae_\varphi,\wt)$-extremal?
\end{problem}
In the case $\sas=\cae$, Problem~\ref{Calabi-Sasaki-problem} reduces to the
search for extremal Sasaki metrics in a given Sasaki polarization, see
\cite{BGS}, which has been studied in many places, see
e.g.~\cite{BGS,BV,CSz,legendre2,MSY1,MSY2,V}. We notice that
Problems~\ref{Calabi-Kahler-problem} and \ref{Calabi-Sasaki-problem} are
naturally linked in the regular case, via Examples~\ref{e:regular},
\ref{e:regular-(xi,wt)-extremal} and Remark~\ref{r:potential}. Indeed, the
parametrization~\eqref{eq:param} implies that for any $\varphi\in \Sc(X,
J_X)^\T$, the K\"ahler potentials $\tilde f= \cf_{\varphi}(K)$ and $f=\cf(K)$
of $K$ are linked on a Sasaki--Reeb quotient $(M, J)$ via $\tilde f= f + \d^c
\varphi (K)$, which is consistent with \eqref{potential}.

\begin{rems}\label{r:Calabi-Moser} \begin{numlist}
\item By the equivariant Gray--Moser theorem and Lemma~\ref{l:Reeb} above, for
  each $\varphi \in \Sc(X, J_X)^\T$ the corresponding contact form
  $\cf_\varphi \in \cS(X, J_X)^\T$ is equivalent to $\cf$ by an identity
  component $\T$-equivariant diffeomorphism $\Phi$ of $\Sm$.  Pulling back
  $J_\varphi$ by $\Phi$ gives a CR structure $J_{\varphi,\cf}$ in the space
  ${\mathcal C}_+(\Sm, \Ds)^\T$ of $\T$-invariant $\Ds$-compatible CR
  structures on $(\Sm, \Ds)$ introduced in Section~\ref{s:GIT}, where $\T\leq
  \Con(\Sm,\Ds)$ is the torus generated by the CR vector fields $X_\sas$ and
  $X_\cae$. Thus, Problem~\ref{Calabi-Sasaki-problem} can be viewed as a
  special case of the problem of finding critical points in
  $\mathcal{C}_+(\Sm, \Ds)^\T$ for the square norm of the momentum map of the
  $\Con(\Sm, \Ds)^\T$-action on $\mathcal{AC}_+(\Sm, \Ds)^\T$ defined in Section~\ref{s:GIT}.

\item In view of Theorem~\ref{thm:main}, one may ask
whether, given $(\Sm, \Ds, J, \sas, \cae)$ as in
Problem~\ref{Calabi-Sasaki-problem}, there exists a $(\cae_\varphi,
m+2)$-extremal solution $\varphi\in \Sc(X,J_X)^\T$ iff there exists an
extremal Sasaki structure with contact form $\cf_\varphi\in\cS(K,
J_K)^\T$. This would be useful for studying irregular extremal Sasaki
structures by taking $X$ quasi-regular and $K$ irregular.  However, it is not
clear how to relate the spaces $\Sc(X,J_X)^\T$ and $\cS(K, J_K)^\T$, since
$J_\varphi$ is the lift of $J_K$ to $\Ds_\varphi$ and its projection onto
$T\Sm/\spn{X}$ does not agree with $J_X$ in general, and so the pullback
$J_{\varphi,\cf}=\Phi^* J_\varphi$ with $\Phi^*\cf_\varphi=\cf$ need not
descend to a complex structure in the same Teichm\"uller class as $J$ on the
quotient of $\Sm$ by $X$.
\end{numlist}
\end{rems}


\subsection{Extremal Sasaki structures from ruled complex surfaces}\label{s:ruled-surface}

We now specialize to geometrically ruled complex surfaces and the regular
Sasaki manifolds they define. Let $(M, J)=\pi\colon \Proj(\cO \oplus
\cL)\to\Bm$ be the underlying complex manifold of a projective $\C P^1$-bundle
over a compact Riemann surface $\Bm$, where $\cL$ is a holomorphic line bundle
over $\Bm$ of positive degree $\ell$. Let $K$ be the generator of the
holomorphic $\Sph^1$-action on $(M,J)$ induced by scalar multiplication in
$\cO$. We denote by $(g_\Bm, \omega_\Bm)$ the K\"ahler metric on $\Bm$ of
constant scalar curvature $4(1-\genus)$, where $\genus$ denotes the genus of
$\Bm$. It is well-known (see e.g.~\cite{HFKG3}) that the K\"ahler cone of $(M,
J)$ can be parametrized up to homothety by the cohomology classes of K\"ahler
metrics $(g,\omega)$ given by the Calabi ansatz~\eqref{ruled-kahler} as
described in Section~\ref{s:Calabi}, with $\ah_1=\ell$. For convenience, in
this section, we let $\ah$ denote the real constant $\ah_0/\ell$ and write
$\mc$ for $\mc_1$ and $\mm$ for $\mm_1$, so that
$\ah_0+\ah_1\mm_1=\ell(\mm+\ah)$.

For each $\bh\in \R$ with $|\bh|>1$, $f_\bh := \mm+\bh$ is a positive Killing
potential for $K$ on $(M, g, \omega)$.  The existence of a $(f_\bh,
4)$-extremal K\"ahler metric of the form \eqref{ruled-kahler} on $M$ (up to
homothety) is studied in \cite{AMT,KTF,LeB2}. It is shown there (see
e.g.~\cite[Thm.~1]{AMT}) that such a metric must be obtained from a smooth
function $\Afn(\mc)= P_{\ah,\bh}(\mc)/(\mc+\ah),$ where $P_{\ah,\bh}$ is a
polynomial of degree $\leq 4$ uniquely determined from \eqref{boundary} in
terms of $\ah, \bh$ and $\ell$; thus$(M,g,J,\omega)$ is $(f_\bh, 4)$-extremal
iff $P_{\ah,\bh}(\mc)$ satisfies the positivity condition
\eqref{positive}. Conversely, we have the following result.

\begin{prop}\label{c:ruled} Let $M= P(\cO \oplus \cL) \to \C P^1$ be a ruled
surface and $\Omega= \lambda[\omega]$ a K\"ahler class on $M$ for
$\lambda>0$, $\ah>1$. Let $|\bh|>1$, $f_b=\mm+b$ and $\av= \frac{\lambda^3}{2}
\int_M f_\bh\, \omega^2$.  If the polynomial $P_{\ah,\bh}(\mc)$ is not positive
on $(-1,1)$, $\Omega$ contains no $(K, \av, 4)$-extremal K\"ahler metrics.
\end{prop}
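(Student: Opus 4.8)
The plan is to argue by contradiction: suppose $(\tilde g,\tilde\omega)$ is a $\T$-invariant $(K,\av,4)$-extremal K\"ahler metric with $\tilde\omega\in\Omega$. First I would pin down the Killing potential. Since $K$ generates the $\Sph^1$-action by scalar multiplication in $\cO$, its zero set on $M$ is the disjoint union of the zero and infinity sections of $M\to\Bm$; hence for any $\T$-invariant K\"ahler metric in $\Omega$ the momentum map of $K$ has image a compact interval, attaining its endpoints exactly on these two sections, and after the homothety $\Omega=\lambda[\omega]$ and a translation this interval is $[-1,1]$. By Remark~\ref{r:potential}, the $\av$-normalised potential $\tilde f$ has the same image as the reference potential $f_\bh=\mm+\bh$ of $(g,\omega)$, so $\tilde f$ is the momentum map of $K$ for $\tilde g$ shifted by $\bh$; in particular $|\bh|>1$ is exactly what makes it positive, in accordance with Problem~\ref{Calabi-Kahler-problem}.

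The key step is a symmetry reduction showing that $\tilde g$ may be taken of the Calabi form~\eqref{ruled-kahler}. For $\deg\cL=\ell>0$ the centraliser of $K$ in $\Aut^r(M,J)$ is reductive, with maximal compact subgroup $\Sph^1\times SU(2)$ --- fibrewise rotation together with rotation of the base $\C P^1$. I would then invoke the weighted analogue of Calabi's theorem on the isometry group of a weighted extremal metric (as developed in~\cite{AM,lahdili}; alternatively one may pass to a local CR $\tilde f$-twist, which is extremal K\"ahler by Theorem~\ref{thm:main}, and use Calabi's classical result) to deduce that, after conjugating $\tilde g$ by an automorphism, $\tilde g$ is $\Sph^1\times SU(2)$-invariant. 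A $\Sph^1\times SU(2)$-invariant K\"ahler metric on $M=P(\cO\oplus\cL)\to\C P^1$ necessarily has constant scalar curvature (round) base and is of the Calabi form~\eqref{ruled-kahler}, with a profile function $\Afn$ on $[-1,1]$ subject to~\eqref{boundary} and~\eqref{positive}. I expect the main obstacle to lie precisely here --- having the weighted Calabi theorem available in the exact form needed (for the centraliser of $K$, not the full automorphism group) and matching the constant $\av$ with the parameter $\bh$; the remaining assertions about $\Sph^1\times SU(2)$-invariant metrics on $M$ are classical (Calabi, Hwang--Singer).

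Once $\tilde g$ is of Calabi form, the hypothesis that it is $(\tilde f,4)$-extremal becomes the $(f_\bh,4)$-extremality condition recalled above, so~\cite[Thm.~1]{AMT} forces the profile function to be $\Afn=P_{\ah,\bh}/(\mc+\ah)$, where $P_{\ah,\bh}$ is the unique polynomial of degree $\leq4$ determined from $\ah,\bh$ and $\ell$ by the boundary conditions~\eqref{boundary}. Since $\ah>1$, the denominator $\mc+\ah$ is positive on $[-1,1]$, so $\Afn>0$ on $(-1,1)$ if and only if $P_{\ah,\bh}>0$ on $(-1,1)$; the hypothesis that $P_{\ah,\bh}$ is not positive on $(-1,1)$ then contradicts the requirement~\eqref{positive} that $\Afn$ be the profile function of a genuine K\"ahler metric~\eqref{ruled-kahler}. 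Hence $\Omega$ contains no $(K,\av,4)$-extremal K\"ahler metric.
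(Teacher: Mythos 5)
Your overall strategy---reduce to the Calabi ansatz by a symmetry argument and then quote the classification of $(f_\bh,4)$-extremal admissible metrics from \cite[Thm.~1]{AMT}---is not the route the paper takes, and it has a genuine gap at exactly the step you yourself flag as the main obstacle. The proposition is invoked in the proof of Theorem~\ref{thm:ruled} for ruled surfaces over a base $B$ of arbitrary genus (the paper's own proof carries the genus through the constant $s=2(1-\genus)/\ell$, and the closing remarks point out that the hypothesis ``$P_{\ah,\bh}$ not positive on $(-1,1)$'' can only occur when $\genus\geq 2$; for $\genus=0$ the statement is vacuous). But when $\genus\geq 2$ the reduced automorphism group of $M=P(\cO\oplus\cL)\to B$ is just the fibrewise $\C^*$: there is no $SU(2)$ acting on the base, so even the strongest weighted analogue of Calabi's isometry theorem only yields $\Sph^1$-invariance of $\tilde g$, which does not force the Calabi form \eqref{ruled-kahler} (an $\Sph^1$-invariant K\"ahler metric on such a bundle need not be bundle-adapted with CSC base). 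This is precisely why the non-existence results in \cite{AMT,KTF,lahdili2} are proved by energy-functional/stability arguments rather than by symmetry reduction. Your fallback---pass to a local CR $\tilde f$-twist and apply Calabi's classical theorem---also fails: Calabi's theorem is a global statement about a compact manifold, while the twist is only local (or an orbifold under a rationality hypothesis on $\tilde f$), so there is no compact extremal manifold to apply it to.

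The paper's actual proof is a contradiction argument of a quite different nature. If $P_{\ah,\bh}$ is negative somewhere on $(-1,1)$ and the class is rational, the existence of a $(K,\av,4)$-extremal metric would force the relative weighted Mabuchi functional to be bounded below by \cite[Cor.~1]{lahdili2}, contradicting \cite[Prop.~2.7]{AMT}; irrational classes are handled by approximating $\ah$ by rationals and using the openness of weighted extremal classes \cite[Thm.~2]{lahdili1}. The remaining borderline case---$P_{\ah,\bh}\geq 0$ on $(-1,1)$ with an interior double root $\pt_0$---is dealt with by perturbing $\bh$ so that $P_{\ah,\tilde\bh}(\pt_0)<0$ while openness preserves existence; this requires the explicit computation that $\partial P_{\ah,\bh}/\partial\bh$ does not vanish at an interior double root, which occupies the second half of the paper's proof and has no counterpart in your argument. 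If you want to salvage a symmetry-reduction proof, it can only work in the genus~$0$ case (where the conclusion holds vacuously anyway), and even there you would need a weighted Calabi isometry theorem for $(f,4)$-\emph{extremal} (not merely constant $(f,4)$-scalar curvature) metrics relative to the centraliser of $K$, which is not supplied by the references you cite.
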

\begin{proof} The proof follows from a slight modification of the arguments
in~\cite[Cor.~1]{lahdili}, taking into account the recent result
\cite[Cor.~1]{lahdili2}. Indeed, suppose for contradiction that $P_{\ah,
  \bh}(\pt_0)=0$ for some $\pt_0\in (-1,1)$, and that $[\omega]$ admits a $(K,
\av, 4)$-extremal K\"ahler metric.

Consider first the case that $P_{\ah, \bh}(\mc)$ is negative somewhere on
$(-1,1)$. If the K\"ahler class $[\omega/2\pi]$ is rational (which is
equivalent to $\ah\in \Q$), we derive a contradiction by
\cite[Cor.~1]{lahdili2} (which implies that the relative weighted Mabuchi
functional must be bounded from below) and \cite[Prop.~2.7]{AMT} (which
concludes otherwise).  If the K\"ahler class $[\omega/2\pi]$ is not rational,
we can approximate it with rational classes $[\tilde \omega/2\pi]$ of the form
\eqref{kahler-class} by taking rational values $\tilde \ah$ close to $\ah$,
and still ensure that $P_{\tilde \ah, \bh}(\mc)$ is negative somewhere on
$(-1,1)$. Furthermore, by the openness of weighted extremal classes
established in \cite[Thm.~2]{lahdili1}, we can assume that $[\tilde \omega]$
admits a $(K, \tilde \av, 4)$-extremal K\"ahler metric with $\tilde\av=
\frac{1}{2} \int_M f_\bh\, \tilde \omega^2$. We get a contradiction as before.

It thus remains to consider the case when $\pt_0\in (-1,1)$ is a double root of
the quadratic $P_{\ah, \bh}(\mc)/(1-\mc)^2$.  In this case, we prove that there
exists a sequence $\tilde \bh_i$ converging to $\bh$, such that
$P_{\ah,\tilde\bh_i}(\pt_0)<0$; by the openness result in
\cite[Thm.~2]{lahdili1}, we can then find $\tilde\bh$ with $P_{\ah, \tilde
  \bh}(\pt_0)<0$ and such that the K\"ahler class $[\omega]$ admits a $(K,
\tilde \av, 4)$-extremal K\"ahler metric with $\tilde\av= \frac{1}{2} \int_M
f_{\tilde \bh} \,\omega^2$, a situation we have already ruled out.

In order to find a sequence as above, it is enough to show that
$\frac{\partial P_{\ah,\bh}}{\partial\bh}(\pt_0) \neq 0$ at each double root
$\pt_0 \in (-1,1)$. The remainder of the proof establishes this technical
fact.

If $\ah = \bh$, we get the natural solution of Corollary~\ref{cor:natural}, in
which case $P_{\ah,\bh}(\mc)>0$ on $(-1,1)$. We can thus assume that $\ah\neq
\bh$, and then, adapting~\cite[(11) \& (31)]{KTF} to our notation, the
polynomial $P_{\ah,\bh}$ is given by
\begin{equation}\label{P}
\frac{2 P_{\ah,\bh}(\mc)}{1-\mc^2}= 2(\mc+\ah) +(1-\mc^2)
\frac{ 3\bht+\ah+s}
{3\bht^2-1},
\end{equation}
where $s=2(1- \genus)/\ell$ and $\bht=(\ah\bh-1)/(\ah-\bh)$.

Suppose that $\frac{\partial P_{\ah,\bh}} {\partial\bh}(\pt_0)=0$ for
$\pt_0\neq\pm1$. We compute that this is equivalent to
\begin{equation*}
2\bht\, s+ 3\bht^2+ 2 \ah\,\bht+1=0.
\end{equation*}
Since $\bht\neq 0$, we may solve for $s$ and substitute back in \eqref{P} to
obtain
\begin{equation*}
\frac{2 P_{\ah,\bh}(\mc)}{1-\mc^2} =
\frac{1+4 \ah\,\bht +4\bht\,\mc-\mc^2}{2\bht}.
\end{equation*}
Now if $P_{\ah,\bh}$ has a double root at $\pt_0\neq\pm 1$, we must have
$\pt_0=2\bht=2(\ah\bh-1)/(\ah-\bh)$, the critical point of this
expression. However, $(\ah\bh-1)^2-(\ah-\bh)^2 = (\ah^2-1)(\bh^2-1) > 0$ for
$|\bh|>1$ and $\ah>1$, so $|\pt_0|>2$ and hence $\pt_0\notin(-1,1)$.
\end{proof}

It is shown in \cite[Prop.~2.12]{AMT} that for a rational K\"ahler class of
the form \eqref{kahler-class}, and for $\mc\in (-1,1)\cap\Q$, $P_{\ah,\bh}(\mc)$
computes a weighted notion of the relative Donaldson--Futaki invariant
associated to the polarized variety $(M, L_{\p,\q})$, where $L_{\p,\q}$ is a
polarization on $M$ corresponding to $\ah = \ah_0/\ell= (2\q/\p\ell) -1$ as
explained in Section~\ref{s:Calabi}. This motivates the following definition.

\begin{defn}\label{weighted-stability}\cite{AMT} Let $(M,L_{\p,\q})$ be a
polarized ruled surface as above, and $\hat Z_{\bh}$ the quasi-periodic (real)
holomorphic vector field on $L_{\p,\q}$, given by the lift of $K$ with respect
to the potential $f_\bh$.  We say that $(M,L_{\p,\q}, \hat Z_\bh)$ is
\emph{analytically relatively $(\hat Z_\bh, 4)$ \textup K-stable with respect
  to admissible test configurations} if $P_{\ah,\bh}(\mc)>0$ on $(-1,1)$.
\end{defn}

Thus, Proposition~\ref{c:ruled} implies that that $(M, J)$ admits a $(K, \av,
4)$-extremal K\"ahler metric in $2\pi c_1(L_{\p,\q})$ iff $(M,L_{\p,\q}, \hat
Z_\bh)$ is analytically relatively $(\hat Z_\bh, 4)$ K-stable with respect to
admissible test configurations.

\subsection{Proof of Theorem~\ref{thm:ruled}} Let $(\Sm_{\p,\q}, \Ds, \sas)$ be
a compact regular contact manifold over a ruled surface $M$ constructed in
Section~\ref{s:Calabi}, and $\cae_\bh \in \con(\Sm_{\p,\q}, \Ds)$ the contact
lift of the generator $K$ of the $\Sph^1$-action on $M$ via potential $\mm +
\bh$. We denote by $\T \leq \con(\Sm_{\p,\q}, \Ds)$ the torus generated by
$\sas$ and $\cae_\bh$, and let $P_{\ah,\bh}$ be the polynomial corresponding
to the K\"ahler class \eqref{kahler-class} with $\ah_0= (2\q/\p) -\ell$ and
$\ah_1=\ell$. We need to show that there exists a $(\cae_\bh,4)$-extremal CR
structure $J\in \mathcal{C}_+(\Sm_{\p,\q},\Ds)^\T$ if and only if
$P_{\ah,\bh}(\mc)>0$ on $(-1,1)$.

Existence follows from Theorem~\ref{thm:main} and the fact that when
$P_{\ah,\bh}(\mc)>0$ on $(-1,1)$, $\Afn(\mc)= P_{\ah,\bh}(\mc)/(\mc+\ah)$
in~\eqref{ruled-kahler} defines an $(f_b, 4)$-extremal K\"ahler metric on $M$
(see \cite[Thm.~1]{AMT}).

We now establish the non-existence claim.  Suppose that $P_{\ah,\bh}(\mc)$ has
a zero on $(-1,1)$. Denote by $J$ the CR-structure in
$\mathcal{C}_+(\Sm_{\p,\q}, \Ds)^\T$ induced by a K\"ahler metric $(g,
\omega)$ on $M$ of the form \eqref{ruled-kahler} and suppose for contradiction
that there exists a $\Ds$-compatible CR structure $J' \in
\mathcal{C}_{+}(\Sm_{\p,\q}, \Ds)^\T$ such that $(\Ds, J',\cae_\bh)$ is an
extremal Sasaki structure and $P_{\ah,\bh}(\mc)$ has a zero on $(-1,1)$. As
$J'$ is $X_{\cae_\bh}$-invariant, on $M$ we obtain another $\omega$-compatible
complex structure $J'$ which is invariant under the $\Sph^1$-action generated
by $K$.  As any compact K\"ahler $4$-manifold admitiing a holomorphic vector
field with zeroes must be a rational or a ruled surface~\cite{CHK}, $(M, J')$
must be a ruled surface too, i.e., $(M, J') = P(\cO \oplus \cL') \to \Bm'$
with $\Bm'$ having the same genus as $\Bm$. Intersection properties of the
zero set of $K$ (which equals the zero and infinity sections in either case)
yield that $\deg(\cL) = \deg(\cL')$.  Since $J$ and $J'$ are compatible with
the same symplectic form $\omega$, the corresponding class $[\omega]$ is of
the form \eqref{kahler-class} on either surface, with the same parameter
$\ah=\ah_0/\ell>1$; furthermore, the Killing potential of $K$ induced by
$\cae_\bh$ in both cases is $f_b = \mm+\bh$ for the same $\bh>1$. It thus
follows that the respective polynomials associated to $[\omega]$ on each ruled
surface $(M,J)$ and $(M, J')$ coincide; we denote them by $P_{\ah,\bh}$.

Now, by Theorem~\ref{thm:main}, the K\"ahler class $[\omega]$ on $(M, J')$
admits a $(K, \av, 4)$-extremal K\"ahler metric which, by
Proposition~\ref{c:ruled}, forces $P_{\ah,\bh}(\mc) > 0$ on $(-1,1)$, a
contradiction. \qed

\begin{rems}\begin{numlist}
\item Theorem~\ref{thm:ruled} yields a $(\cae_\bh,4)$-extremal Sasaki metric
on the Sasaki join $(\Sm_{\wv,\p}, \Ds)$ over $\Bm\times \C P^1_\wv$ (with
weights $w_+ = \q, w_- = \q-\p\ell$, where $\ah= (2\q/\p\ell) -1$) if
$P_{\ah,\bh}(\mc)>0$ on $(-1,1)$. This always happens when $\Bm$ has genus $0$
or $1$, or when $\ah$ is sufficiently large (see e.g.~\cite[Thm.~1]{AMT}).
These extremal Sasaki structures are not new (see~\cite{BT0,BT}) but we have
seen in Section~\ref{s:Calabi} that they can equivalently be obtained from a
$(\tilde f_\bh,4)$-extremal product K\"ahler metric $g_\Bm + p g_\wv$ on
$\Bm\times \C P^1_\wv$.  Taking the limit $\bh\to \infty$ also yields the
extremal K\"ahler metrics on the ruled surfaces constructed in~\cite{calabi,
  TF} as a CR twist of a product metric on $\Bm\times \C P^1_\wv$.

\item To the best of our knowledge, the non-existence result obtained via
Theorem~\ref{thm:ruled} is new, at least for irrational values of $\bh$ (in
which case $\cae_\bh$ generates $\T$, and thus is not quasi-regular).  To
construct specific examples, following \cite{KTF}, on any ruled surface $M$
over a curve of genus $\geq 2$ as above, there exists an explicit $\ah_0(M)>1$
such that for any $\ah\in (1, \ah_0(M)]$ the polynomial
$P_{\ah, \bh_\ah}(\mc)$ has a zero on $(-1, 1)$, where $\bh=\bh_\ah>1$ is the
unique solution of $\ah= \frac{1+\bh^2}{2\bh}$ satisfying $|b|>1$.  Taking
coprime positive integers $\p,\q$ with $\ah= (2\q/\p\ell) -1 \leq
\ah_0(M)$, we obtain a contact manifold $(\Sm_{\p,\q}, \Ds)$ which admits no
$(\cae_\bh,4)$-extremal CR structure $J\in \mathcal{C}_+(\Sm_{\p,\q},\Ds)^\T$.
\end{numlist}
\end{rems}

\subsection*{Acknowledgements} 

We would like to thank Paul Gauduchon, Gideon Maschler and Christina
T{\o}nnesen-Friedman for stimulating conversations. We are grateful to Abdellah Lahdili for his useful comments on the manuscript, and to Roland Pu\v cek for discussions which helped to clarify some of the toric constructions in this paper.

\end{document}